\theoremstyle{plain}
\newtheorem {Lem}{Lemma}
\newtheorem {The}{Theorem}
\newtheorem{oldtheorem}{Theorem}
\newtheorem {Cor}{Corollary}
\newtheorem {Prob}{Problem}
\theoremstyle{remark}
\theoremstyle{definition}
\def\bar{\overline}
\def\map{\longrightarrow}
\def\e{\varepsilon}
\def\GL{\operatorname{GL}}
\def\SL{\operatorname{SL}}
\def\Sp{\operatorname{Sp}}
\def\SO{\operatorname{SO}}
\def\EU{\operatorname{EU}}
\def\EEU{\operatorname{EEU}}
\def\FU{\operatorname{FU}}
\def\CU{\operatorname{CU}}
\def\B{\operatorname{B}}
\def\C{\operatorname{C}}
\def\F{\operatorname{F}}
\def\G{\operatorname{G}}
\def\K{\operatorname{K}}
\def\jdim{\operatorname{j-dim}}
\def\unlhd{\trianglelefteq}
\def\map{\longrightarrow}
\def\pamod#1{\,(\operatorname{mod}{\, #1})\,}
\newcommand{\FormR}{A,\Lambda}
\newcommand{\FidealI}{I,\Gamma} 
\newcommand{\FidealJ}{J,\Delta}
\newcommand{\FidealK}{K,\Omega}
\newcommand{\FidealL}{L, \Theta}
\def\Cent{\operatorname{Cent}}
\def\jdim{\operatorname{j-dim}}
\def\sr{\operatorname{sr}}
\def\map{\longrightarrow}
\def\bar{\overline}
\def\epsilon{\varepsilon}
\def\e{\varepsilon}
\def\Ga{\Gamma}
\newcommand\Label[1]{\label{#1}}
\newcommand{\ep}{\varepsilon}
\newcommand\KH{\operatorname{KH}}
\newcommand\KU{\operatorname{KU}}
\newcommand\StU{\operatorname{StU}}
\def\map{\longrightarrow}
\def\e{\varepsilon}
\def\bar{\overline}
\def\B{\operatorname{B}}
\def\C{\operatorname{C}}
\def\F{\operatorname{F}}
\def\G{\operatorname{G}}
\def\K{\operatorname{K}}
\def\Sp{\operatorname{Sp}}
\def\SO{\operatorname{SO}}
\def\SL{\operatorname{SL}}
\def\GL{\operatorname{GL}}
\def\GU{\operatorname{GU}}
\def\EE{\operatorname{EE}}
\def\sr{\operatorname{sr}}
\long\def\forget#1\forgotten{}
\title[commutators of relative elementary unitary groups]
{commutators of relative and unrelative\\
 elementary unitary groups}
\author{N.~Vavilov}
\address{Department of Mathematics and Computer Science,
St.~Petersburg State University, St.~Petersburg, Russia}
\email{nikolai-vavilov@yandex.ru}
\thanks{The work of the first author was supported by the
Russian Science Foundation grant 17-11-01261.}
\author{Z.~Zhang}
\address{Department of  Mathematics, Beijing Institute
of Technology, Beijing, China}
\email{zuhong@hotmail.com}
\keywords{Bak's unitary groups, elementary subgroups, congruence
subgroups, standard commutator formula, unrelativised commutator formula, elementary generators, multiple commutator formula}
\begin{document}

\begin{abstract}
In the present paper, which is an outgrowth of our joint work
with Anthony Bak and Roozbeh Hazrat on unitary commutator 
calculus
\cite{BV3, RNZ1, RNZ4, RNZ5}, we find generators of the mixed
commutator subgroups of relative elementary groups and 
obtain unrelativised versions of commutator 
formulas in the setting of Bak's unitary groups.
It is a direct sequel of our papers \cite{NV18, NZ2, NZ3, NZ6} and \cite{NZ1, NZ4},
where similar results were obtained for $\GL(n,R)$ and
for Chevalley groups over a commutative ring with 1, respectively.
Namely, let $(A,\Lambda)$ be any form ring and $n\ge 3$.
We consider Bak's hyperbolic unitary group $\GU(2n,A,\Lambda)$.
Further, let $(I,\Gamma)$ be a form ideal of $(A,\Lambda)$.
One can associate with $(I,\Gamma)$ the corresponding
elementary subgroup $\FU(2n,I,\Gamma)$ and the
relative elementary subgroup $\EU(2n,I,\Gamma)$
of $\GU(2n,A,\Lambda)$. Let $(J,\Delta)$ be another form
ideal of $(A,\Lambda)$. 
In the present paper we prove an unexpected result that 
the non-obvious type of generators for
$\big[\EU(2n,I,\Gamma),\EU(2n,J,\Delta)\big]$,
as constructed in our previous papers with Hazrat, are 
redundant and can be expressed as products of the obvious
generators, the elementary conjugates
$Z_{ij}(ab,c)=T_{ji}(c)T_{ij}(ab)T_{ji}(-c)$ and $Z_{ij}(ba,c)$,
and the
elementary commutators $Y_{ij}(a,b)=[T_{ji}(a),T_{ij}(b)]$,
where $a\in(\FidealI)$, $b\in(\FidealJ)$, $c\in(\FormR)$.
It follows that 
$\big[\FU(2n,I,\Gamma),\FU(2n,J,\Delta)\big]=
\big[\EU(2n,I,\Gamma),\EU(2n,J,\Delta)\big]$.
In fact, we establish much more precise generation results. 
In particular, even the elementary commutators $Y_{ij}(a,b)$ 
should be 
taken for one long root position and one short root position.  
Moreover, $Y_{ij}(a,b)$ are central modulo
$\EU(2n,(\FidealI)\circ(\FidealJ))$ and behave as symbols.
This allows us to generalise and unify many previous 
results,including the multiple elementary commutator
formula, and dramatically simplify their proofs.
\end{abstract}

\maketitle

\maketitle
\hangindent 5.5cm\hangafter=0\noindent
To our 
dear friend Mohammad Reza Darafsheh,\\
\phantom{xxxxxxxxx} with affection and admiration
\par\hangindent 5.5cm\hangafter=0\noindent
\bigskip

\section*{Introduction}

In a series of our joint papers with Anthony Bak and 
Roozbeh Hazrat \cite{BV3, RNZ1, RNZ4, RNZ5} we studied 
commutator formulas in Bak's unitary groups. In the present 
paper we generalise, refine and strengthen some of the main 
results of these works. Namely, we discover that
the set of generators for the mixed commutator subgroup
of relative elementary unitary groups
listed in these papers can be substantially reduced and remove
all commutativity conditions therein\footnote{In particular,
this solves \cite{yoga-2}, Problem~1 and \cite{RNZ4}, Problem 1.}. 
This allows us to prove unexpected unrelative versions of the 
commutator formulas, generalise multiple elementary commutator formulas, and more. These results both improve a great number of previous results, and path the way to several new unexpected applications.

Morally, the present paper is a direct sequel our papers
 \cite{NV18, NZ2, NZ3, NZ6} and \cite{NZ1, NZ4},
where the same was done for $\GL(n,R)$ and for Chevalley groups
over a commutative ring with 1, respectively. There, the proofs
heavily relied on our previous works, in particular on
\cite{ASNV, NVAS, NVAS2, RHZZ1, RHZZ2} for $\GL(n,R)$ and
on \cite{RNZ2, RNZ3} for Chevalley groups. Similarly, the present
paper heavily hinges on the results of \cite{BV3, RNZ1, RNZ4, 
RNZ5}.


\subsection{The prior state of art} To enunciate the main results 
of the present papers, let us briefly recall the notation, which will
be reviewed in somewhat more detail in \S\S~1--4.
Let $(A,\Lambda)$ be a form ring,
$n\ge 3$, and let $\GU(2n,A,\Lambda)$ be the hyperbolic
Bak’s unitary group.
Below, $\EU(2n, A,\Lambda)$ denotes the [absolute] elementary unitary
group, generated by the elementary root unipotents.

As usual, for a form ideal $(I,\Gamma)$ of the
form ring $(A,\Lambda)$ we denote by
 $\FU(2n,I,\Gamma)$ the unrelative
elementary subgroup of level $(I,\Gamma)$, and by
$\EU(2n, I,\Gamma)$
the relative elementary subgroup of level
$(I,\Gamma)$. By definition,
$\EU(2n, I,\Gamma)$ is the normal closure of  $\FU(2n,I,\Gamma)$
in $\EU(2n, A,\Lambda)$. Further, $\GU(2n, I,\Gamma)$
and $\CU(2n, I,\Gamma)$
denote the principal congruence subgroup and the
full congruence subgroup of level $(I,\Gamma)$, respectively.

Let us recapitulate two principal results of our joint papers with
Roozbeh Hazrat, \cite{RNZ1, RNZ4, RNZ5}. 
The first one is the birelative standard commutator
formula, \cite{RNZ1}, Theorems~1 and 2. It is a very broad generalisation
of the commutator formulas for unitary groups, previously established
by Anthony Bak, the first author, Leonid Vaserstein, Hong You, 
Günter Habdank, and others, see, for instance \cite{B1, B2, BV3, VY, Ha1, Ha2, BP}.

\begin{oldtheorem}\label{standard}
Let $R$ be a commutative ring, $(A,\Lambda)$ be a form ring
such that $A$ is a quasi-finite $R$-algebra. Further, let $(\FidealI)$
and $(\FidealJ)$ be two form ideals of the form ring $(\FormR)$
and let $n\ge 3$. Then the following commutator identity holds
$$   [\GU(2n,\FidealI), \EU(2n, \FidealJ)] =
[\EU(2n,\FidealI), \EU(2n, \FidealJ)]. $$
\noindent
When $A$ is itself commutative, one even has
$$   [\CU(2n,\FidealI),\EU(2n,\FidealJ)] =
[\EU(2n,\FidealI),\EU(2n,\FidealJ)]. $$
\end{oldtheorem}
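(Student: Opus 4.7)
The plan is to establish the nontrivial inclusions; the reverse inclusions are immediate from $\EU(2n,\FidealI)\subseteq\GU(2n,\FidealI)\subseteq\CU(2n,\FidealI)$. I focus first on the formula with $\GU$ and reduce the commutative enhancement to it at the end. The first step is to reduce to a single elementary generator on the right. Every $\epsilon\in\EU(2n,\FidealJ)$ is a product of $\EU(2n,\FormR)$-conjugates of elementary root unipotents $T_{ij}(b)$ with $b\in J$ or $b\in\Delta$. Applying the commutator identities
$$[xy,z]=[x,\,{}^y\!z]\cdot{}^y[y,z], \qquad [x,yz]={}^y[x,z]\cdot[x,y],$$
together with the normality of $\GU(2n,\FidealI)$ in $\GU(2n,\FormR)$, I would rewrite any commutator $[g,\epsilon]$ with $g\in\GU(2n,\FidealI)$ as a product of commutators of the form $[h,T_{ij}(b)]$ with $h\in\GU(2n,\FidealI)$, modulo factors already lying in $[\EU(2n,\FidealI),\EU(2n,\FidealJ)]$.

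The second step is a local--global argument, made available by $A$ being quasi-finite over $R$. For fixed $h$ and $T_{ij}(b)$ I would introduce the conductor set
$$\ma=\bigl\{\,r\in R : [h,T_{ij}(rb)]\in[\EU(2n,\FidealI),\EU(2n,\FidealJ)]\,\bigr\}.$$
Bilinearity-type commutator identities for elementary generators show $\ma$ is an ideal of $R$, so it suffices to prove $\ma\not\subseteq\mm$ for every maximal ideal $\mm\lhd R$. Since $A$ is a filtered colimit of module-finite $R$-subalgebras, after Noetherian reduction I may replace $A$ by such a subalgebra and then localise at $\mm$, reducing to the situation where the coefficient ring is semilocal.

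In the semilocal case $h\in\GU(2n,\FidealI)$ admits a Gauss-type decomposition along a hyperbolic pair, $h=u^-du^+$, with $u^\pm$ lying in the $\EU(2n,\FidealI)$-closures of the unipotent radicals of the opposing parabolics and $d$ a ``Levi'' correction still in $\GU(2n,\FidealI)$. The unipotent factors $u^\pm$ produce commutators with $T_{ij}(b)$ that are manifestly products of Steinberg-style commutators of $\EU(2n,\FidealI)$-elementaries with $\EU(2n,\FidealJ)$-elementaries. The hard part will be handling the Levi piece $d$: its commutator with $T_{ij}(b)$ lands in $\EU(2n,(\FidealI)\circ(\FidealJ))$, and forcing such a factor into $[\EU(2n,\FidealI),\EU(2n,\FidealJ)]$ requires a Stepanov-style level computation combined with the unitary Hall--Witt identity. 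This level-computation is the genuine obstacle and is precisely where the earlier unitary commutator calculus of \cite{BV3, RNZ4, RNZ5} enters the argument.

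Finally, for the commutative version with $\CU(2n,\FidealI)$ in place of $\GU(2n,\FidealI)$: when $A$ is commutative, $\CU(2n,\FidealI)/\GU(2n,\FidealI)$ embeds into the centre of $\GU(2n,A/I,\lam/\gam)$ and so commutes with $\EU(2n,\FidealJ)$ modulo $\GU(2n,\FidealI)$. Consequently, for $c\in\CU(2n,\FidealI)$ and $\epsilon\in\EU(2n,\FidealJ)$ the commutator $[c,\epsilon]$ differs from a commutator $[g,\epsilon]$ with $g\in\GU(2n,\FidealI)$ by an element of $[\GU(2n,\FidealI),\EU(2n,\FidealJ)]$, which by the $\GU$-case is already contained in $[\EU(2n,\FidealI),\EU(2n,\FidealJ)]$. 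The commutative refinement therefore follows from the first assertion.
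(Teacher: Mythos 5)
Theorem~A is cited in the paper from \cite{RNZ1} (Theorems 1 and 2 there) and is not reproved in the present text; the paper explicitly flags that the published proofs of this and the closely related Theorem~B depend on localisation, while the new results of \S\S 5--8 are designed to be proved \emph{without} any recourse to Theorem~A. So there is no in-paper proof to compare your argument against; the comparison has to be with \cite{RNZ1, RNZ4, RNZ5}.

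Your sketch is in the right tradition: the reduction to a single elementary factor $T_{ij}(b)$ on the right, the conductor ideal $\ma\lhd R$, the Noetherian/quasi-finite reduction to a module-finite subalgebra, and the passage to the semilocal case are all standard moves in the Bass--Vaserstein/Stepanov--Vavilov circle of ideas, and this is indeed the broad shape of the proof in \cite{RNZ1}. But what you have written is an outline, not a proof: the steps that carry all the weight are asserted rather than carried out. Specifically, (1) it is not at all routine that an element $h$ of the \emph{relative} congruence subgroup $\GU(2n,\FidealI)$ over a semilocal ring admits a Gauss-type factorisation $h=u^-du^+$ with the factors controlled by $(\FidealI)$ -- for unitary groups this requires surjective stability with form-ideal levels, which in \cite{RNZ1} is exactly where the relative localisation--completion machinery is invoked; and (2) the ``Stepanov-style level computation'' for the Levi piece $d$ is named but not done, and it is the crux.

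The treatment of the commutative enhancement is where I see a genuine gap. You assert that for commutative $A$, $\CU(2n,\FidealI)/\GU(2n,\FidealI)$ maps into the centre of $\GU(2n,A/I,\Lambda/\Gamma)$ and then conclude that $[c,\epsilon]$ ``differs from a commutator $[g,\epsilon]$ with $g\in\GU(2n,\FidealI)$ by an element of $[\GU(2n,\FidealI),\EU(2n,\FidealJ)]$.'' That conclusion would require a factorisation $c=gz$ with $g\in\GU(2n,\FidealI)$ and $z$ actually commuting with $\epsilon$ (not merely commuting modulo $\GU(2n,\FidealI)$), which you have not produced and which is not automatic from centrality of the image of $c$ in the quotient. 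What the definition of $\CU(2n,\FidealI)$ gives you directly is that $[c,\epsilon]\in\GU(2n,\FidealI)$, and this alone does not place it in $[\EU(2n,\FidealI),\EU(2n,\FidealJ)]$. In \cite{RNZ1} the $\CU$-refinement is handled by a separate argument and is \emph{not} a formal consequence of the $\GU$-case. If you want to salvage this step, you need either an explicit product decomposition of $\CU(2n,\FidealI)$ in the commutative case, or a direct level computation showing $[\CU(2n,\FidealI),T_{ij}(b)]$ lands in $[\EU(2n,\FidealI),\EU(2n,\FidealJ)]$ and that this is stable under the conductor-ideal patching.
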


Another crucial result is description of a generating set for the
mixed commutator subgroup 
$[\EU(2n,\FidealI), \EU(2n,\FidealJ)]$ {\it as a group\/}, similar
to the familiar generating set for relative elementary subgroups,
see \cite{BV3}, Proposition 5.1 (compare Lemma 3 below). 
\par
Recall that we denote by $T_{ij}(a)$ elementary unitary
transvections. They come in two denominations, those of 
{\it short root type\/}, when $i\neq\pm j$, and those of 
{\it long root type\/}, when $i=-j$. The corresponding root
subgroups are then parametrised by the ring $A$ itself and by
the form parameter $\Lambda$, respectively. To simplify
notation in the relative case, we introduce the following convention. 
For a form ideal $(\FidealI)$ we write $a\in(\FidealI)$ to denote 
that $a\in I$ if $i\neq\pm j$, and 
$a\in\lambda^{-(\e(i)+1)/2}\Gamma$ if $i=-j$. Clearly,
$a\in(\FidealI)$ means precisely that $T_{ij}(a)\in\EU(2n,\FidealI)$,
see \S\S~3,4 for details. 
\par
Further, we consider
the elementary conjugates $Z_{ij}(a,c)$ and the elementary commutators $Y_{ij}(a,b)$, which are defined as follows:
$$ Z_{ij}(a,c)=T_{ji}(c)T_{ij}(a)T_{ji}(-c),
\qquad Y_{ij}(a,b)=[T_{ji}(a),T_{ij}(b)], $$
\par
The following result in a slightly weaker
form was stated as Theorem~9 of \cite{RNZ5}, and in
precisely this form as Theorem 3B of \cite{RNZ4}. Observe
that there its proof depended on Theorem A, and thus 
ultimately, on localisation methods.

\begin{oldtheorem}\label{oldgenerators}
Let $R$ be a commutative ring, $(A,\Lambda)$ be a form ring
such that $A$ is a quasi-finite $R$-algebra.
 Let $(\FidealI)$ and $(\FidealJ)$ be two form ideals of the form ring $(\FormR)$ and let $n\ge 3$.
The relative commutator subgroup  $[\EU(2n,\FidealI), \EU(2n, \FidealJ)]$ is generated by the elements of the following three 
types
\par\smallskip
$\bullet$ $Z_{ij}(ab,c)$ and $Z_{ij}(ba,c)$,
\par\smallskip
$\bullet$ $Y_{ij}(a,b)$, 
\par\smallskip
$\bullet$ $[T_{ij}(a), Z_{ij}(b,c)]$, 
\par\smallskip\noindent
where in all cases $a\in(\FidealI)$, $b\in(\FidealJ)$ and
$c\in(\FormR)$  
\end{oldtheorem}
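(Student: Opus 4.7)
The plan is to combine the standard generator description of $\EU(2n, \FidealI)$ (Lemma~3) with an elaborate but elementary commutator calculus, ultimately reducing every element of the mixed commutator subgroup to a product of elements of the three listed types.

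First, I would verify that all three types actually lie in $[\EU(2n,\FidealI),\EU(2n,\FidealJ)]$. The elements $Y_{ij}(a,b)=[T_{ji}(a),T_{ij}(b)]$ and $[T_{ij}(a), Z_{ij}(b,c)]$ are by construction commutators with the first entry in $\EU(2n,\FidealI)$ and the second in $\EU(2n,\FidealJ)$. For $Z_{ij}(ab,c)$, it is a conjugate of $T_{ij}(ab)$, and since $n\ge 3$ one can pick an admissible intermediate index $h\neq\pm i,\pm j$ and write $T_{ij}(ab)=[T_{ih}(a),T_{hj}(b)]$ by the Chevalley commutator formula; the $Z_{ij}(ba,c)$ case is symmetric. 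A corresponding argument handles the long-root case using the appropriate unitary Chevalley relation.

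Second, for the reverse containment, let $H$ denote the subgroup of $\EU(2n,\formr)$ generated by the three types. By Lemma~3, the group $\EU(2n,\FidealI)$ is generated by $Z_{ij}(a,c)$ with $a\in(\FidealI)$ and $c\in\formr$, and similarly for $\EU(2n,\FidealJ)$. Repeatedly applying the identities $[xy,z]={}^x[y,z]\cdot[x,z]$ and $[x,yz]=[x,y]\cdot{}^y[x,z]$, together with the fact that conjugation by $\EU(2n,\formr)$ preserves each relative elementary subgroup, every element of $[\EU(2n,\FidealI),\EU(2n,\FidealJ)]$ can be written as a product of $\EU(2n,\formr)$-conjugates of basic commutators $[Z_{ij}(a,c_1),Z_{kl}(b,c_2)]$. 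Granted normality of $H$ under $\EU(2n,\formr)$, the task reduces to showing each such basic commutator lies in $H$.

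Third, I would expand such a basic commutator using $Z_{ij}(a,c)=T_{ij}(a)\cdot Y_{ji}(-a,c)$ and the unitary Chevalley commutator formulas, performing a case analysis on the relative position of the hyperbolic roots $e_i-e_j$ and $e_k-e_l$. When the sum is a root, Chevalley yields factors $T_{mn}(\pm ab)$ or $T_{mn}(\pm ba)$, which are $Z_{mn}(\pm ab,0)$ or $Z_{mn}(\pm ba,0)\in H$. When $\{i,j\}\cap\{k,l\}$ has the wrong cardinality for addition to a root, the transvections commute. The remaining cases $\{i,j\}=\{k,l\}$ generate precisely $Y_{ij}$-type elements and $[T_{ij}(a),Z_{ij}(b,c)]$-type elements, the third class of generators. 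Throughout one uses that the leftover $Y_{ji}(-a,c_1)$ and $Y_{lk}(-b,c_2)$ factors pulled out of the $Z$'s are themselves in $H$ (of the second type).

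The main obstacle is the laborious case analysis, amplified by two structural features of Bak's setting: the hyperbolic root system carries both long and short roots, with the long-root Chevalley relations involving the form parameter $\lam$ and constraints from the ideal $\gam$; and the ring $A$ is noncommutative, which forces separate bookkeeping for $ab$ versus $ba$ and breaks any naive symmetry. A related obstacle is establishing that $H$ is normal in $\EU(2n,\formr)$, which itself requires a parallel (but simpler) Chevalley expansion to rewrite each conjugate $^{T_{ij}(c)}Z_{kl}(a,d)$ as a product in $H$; the structure of this subsidiary calculation largely dictates which generators must appear in the main list, and in particular explains the presence of the third, less obvious, type.
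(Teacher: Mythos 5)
The statement you are proving is not actually proved in the present paper: it is cited (Theorems~9 and~3B of the authors' earlier work with Hazrat), and the paper expressly notes that those published proofs ``heavily depended on some version of Theorem~A, and thus, ultimately, on localisation.'' The present paper instead proves the strictly stronger Theorem~1 by elementary commutator calculus (Lemmas~6--9 in \S\S 5--6), a chain that carefully tracks membership in the two nested subgroups $\EU(2n,(\FidealI)\circ(\FidealJ))$ and $[\FU(2n,\FidealI),\FU(2n,\FidealJ)]$, and that rewrites transvections via relations such as $T_{ji}(-a)=[T_{jh}(1),T_{hi}(-a)]$ before conjugating. Your purely elementary strategy is therefore different from the cited proof of Theorem~B, and different in structure from the elementary proof of Theorem~1 given here.

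More importantly, your argument has a genuine gap at the central reduction step. The identity $Z_{ij}(a,c)=T_{ij}(a)\cdot Y_{ji}(-a,c)$ is correct, but the factor you ``pull out,''
$$Y_{ji}(-a,c)=\bigl[T_{ij}(-a),\,T_{ji}(c)\bigr],$$
has $a\in(\FidealI)$ and $c\in(\FormR)$ --- \emph{not} $c\in(\FidealJ)$. Hence it is not a generator of the second type (those require one argument in $(\FidealI)$ and the other in $(\FidealJ)$), and in general it does not lie in $[\EU(2n,\FidealI),\EU(2n,\FidealJ)]$ at all: it is merely an element of the normal subgroup $\EU(2n,\FidealI)$. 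Your claim that ``the leftover $Y_{ji}(-a,c_1)$ and $Y_{lk}(-b,c_2)$ factors pulled out of the $Z$'s are themselves in $H$ (of the second type)'' is therefore false. Since this claim is what lets you discard those factors and reduce to the case analysis on $[T_{ij}(a),T_{kl}(b)]$, the reduction does not go through. Expanding the basic commutator $[T_{ij}(a)Y_{ji}(-a,c_1),\,T_{kl}(b)Y_{lk}(-b,c_2)]$ by (C1)/(C2) produces cross-terms of the form $[T_{ij}(a),Y_{lk}(-b,c_2)]$ and $[Y_{ji}(-a,c_1),T_{kl}(b)]$ that \emph{do} lie in the mixed commutator subgroup but are not visibly in $H$; dealing with them is precisely where the real work lies, and where the paper invests the effort of Lemmas~8 and~9 (rewriting $T_{ij}(a)$ as a commutator via a spare index $h$, tracking which fragments land in $\EU(2n,(\FidealI)\circ(\FidealJ))$, and using normality of that subgroup). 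A correct elementary proof would need some version of that machinery rather than the shortcut you propose.
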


\subsection{Statement of the principal result}
The technical core of the present paper are Lemmas 6--12 
that we prove in \S\S~5--8. Together they imply
that the above Theorem B can be {\it drastically\/} generalised
and improved, as follows:
\par\smallskip
$\bullet$ We can lift the commutativity condition. 
\par\smallskip
$\bullet$ The third type of generators are redundant. 
\par\smallskip
$\bullet$ The second type of generators can be
restricted to one long and one short root (and are subject 
to further relations, to be stated below). 
\par\smallskip
The following result is the pinnacle of the present paper, 
other results are either preparation to its proof, or its easy 
corollaries. For the general linear group $\GL(n,R)$ it was 
established in \cite{NZ2}, Theorem 1. For Chevalley groups
$G(\Phi,R)$ over commutative rings
--- and thus, in particular, for the usual symplectic group
$\Sp(2n,R)$ and the split orthogonal group $\SO(2n,R)$ ---
it is essentially a conjunction of \cite{NZ1}, Theorem 1.2, and \cite{NZ4}, Theorem 1. However, as explained below, in these
special cases one can say somewhat more.

\begin{The}\label{generators}
Let $(\FormR)$ be any associative form ring, let $(\FidealI)$ 
and $(\FidealJ)$ be two form ideals of the form ring 
$(\FormR)$ and let $n\ge 3$. Then the relative commutator 
subgroup $[\EU(2n,\FidealI), \EU(2n, \FidealJ)]$ is generated 
by the elements of the following two types
\par\smallskip
$\bullet$ $Z_{ij}(ab,c)$ and $Z_{ij}(ba,c)$,
\par\smallskip
$\bullet$ $Y_{ij}(a,b)$,
\par\smallskip\noindent
where in all cases $a\in(\FidealI)$, $b\in(\FidealJ)$ and
$c\in(\FormR)$. Moreover, for the second type of generators 
it suffices to take one pair $(h,k)$, $h\neq\pm k$, and one 
pair $(h,-h)$. 
\end{The}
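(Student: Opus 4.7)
The plan is to start from Theorem \ref{oldgenerators} and perform three successive reductions, each contained in a portion of the technical Lemmas 6--12 promised for \S\S~5--8. Roughly, the first reduction removes the commutativity hypothesis, the second eliminates the third type of generator, and the third cuts the pool of $Y$-generators down to one short-root and one long-root position.

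First I would dispose of the quasi-finite $R$-algebra hypothesis. In \cite{RNZ4, RNZ5} this hypothesis enters only indirectly, through the standard commutator formula of Theorem \ref{standard}, which is invoked to reduce an arbitrary element of the mixed commutator to a commutator of two elementary conjugates. The subsequent step---writing such a commutator of elementary conjugates as a product of the three listed generator types---is carried out entirely inside the unitary Steinberg group via iterated application of the Chevalley commutator formulas and conjugation relations, and therefore requires no ring-theoretic hypothesis beyond associativity and $n \geq 3$. I would redo that derivation directly, obtaining the three-type generating set over an arbitrary form ring.

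Second, I would show that every generator of the third type, $[T_{ij}(a), Z_{ij}(b,c)]$, is expressible in terms of the first two. Expanding $Z_{ij}(b,c) = T_{ji}(c)T_{ij}(b)T_{ji}(-c)$ and applying the Hall--Witt identity yields a product of iterated commutators in which each factor is either of $Y$-type at some root position, or of the form $[T_{pq}(x), T_{rs}(y)]$ where the Chevalley commutator formula rewrites it as a product of elementary root elements with parameters in $(\FidealI)\circ(\FidealJ)$; these in turn are already $Z$-type generators at the product form ideal level. This mirrors the corresponding step for Chevalley groups in \cite{NZ1, NZ4}, adjusted to account for the extra long-root contributions coming from the form-parameter identities.

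Third, I would reduce the $Y$-generators to one fixed short-root pair $(h,k)$ and one fixed long-root pair $(h,-h)$. The Weyl group of the hyperbolic unitary root system acts transitively on short roots and on long roots separately, and each of its elements is realised inside $\EU(2n,\FormR)$ by a product of elementary transvections $T_{pq}(\pm 1)$. Conjugating $Y_{hk}(a,b)$ by such a representative $w$ produces some $Y_{h'k'}(\pm a, \pm b)$ at the Weyl-translated position, up to the correction $[w, Y_{hk}(a,b)]$, and this correction will be shown to lie in $\EU\bigl(2n,(\FidealI)\circ(\FidealJ)\bigr)$, where it is expressible via $Z$-type generators. Combined with the symbol-like bilinearity and the centrality of $Y_{hk}(a,b)$ modulo $\EU\bigl(2n,(\FidealI)\circ(\FidealJ)\bigr)$---which form the technical heart of Lemmas 6--12 and are already foreshadowed in the abstract---this transitivity argument eliminates every $Y$-generator except those at the two chosen positions. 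The main obstacle will be the long-root bookkeeping: while short-root manipulations closely echo the $\GL(n,R)$ and Chevalley-group reductions of \cite{NZ1, NZ2, NZ4}, the long-root unipotents $T_{i,-i}(a)$ are parametrised by $\lambda^{-(\e(i)+1)/2}\Gamma$ rather than by $A$ itself, and the Chevalley commutator formulas couple short and long roots asymmetrically through the involution and the scalar $\lambda$. Verifying that the bilinearity and centrality of $Y_{ij}(a,b)$ modulo $\EU\bigl(2n,(\FidealI)\circ(\FidealJ)\bigr)$ survive these form-parameter twists without ever invoking commutativity of $A$ is the delicate technical core that Lemmas 6--12 are designed to establish.
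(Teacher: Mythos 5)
Your plan follows the paper's own two-claim decomposition (first claim: type-3 generators are redundant; second claim: the $Y$'s reduce to one short and one long position, both modulo the centrality of $Y_{ij}(a,b)$ in $\EU(2n,\FormR)$ modulo $\EU(2n,(\FidealI)\circ(\FidealJ))$, which is indeed the technical heart, proved by exhaustive case analysis of $[T_{kl}(c),Y_{ij}(a,b)]$ in Lemmas~\ref{Op-1} and \ref{Op-2}). Two points of comparison. For the first claim, the paper does not use the Hall--Witt identity: the engine of Lemmas~\ref{form-1} and \ref{form-2} is the decomposition of one argument through a spare index $h\neq\pm i,\pm j$, e.g.\ $T_{ji}(-a)=[T_{jh}(1),T_{hi}(-a)]$, followed by (C1)/(C2) expansion; this spare index is essential because both arguments of $[T_{ij}(a),Z_{ij}(b,c)]$ sit in the same pair of opposite root subgroups, so no Chevalley commutator formula applies to them directly --- any workable variant of your step must introduce such an index, and this is precisely where $n\ge 3$ enters. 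For the second claim your route is genuinely different and, for this theorem alone, leaner: since conjugation by a short-root Weyl element $w_{pq}(1)=T_{pq}(1)T_{qp}(-1)T_{pq}(1)$ permutes root subgroups exactly, ${}^{w}Y_{ij}(a,b)$ \emph{is} some $Y_{i'j'}(\pm a',\pm b')$, and the centrality lemmas give ${}^{w}Y_{ij}(a,b)\equiv Y_{ij}(a,b)$ modulo $\EU(2n,(\FidealI)\circ(\FidealJ))$, so transitivity of the (type $\D_n$) Weyl subgroup generated by short-root reflections on both root lengths finishes the reduction --- note you should avoid long-root reflections, whose realisation would require an invertible element of the form parameter. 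The paper instead proves the much stronger rolling-over congruences $Y_{ij}(c_1ac_2,b)\equiv Y_{hl}(a,c_2bc_1)$ of Lemmas~\ref{Op-2.1} and \ref{Op-2.2}, which move ring factors between the two arguments; these are overkill for Theorem~\ref{generators} but indispensable for Theorems~\ref{The:Op-2.1}, \ref{T:7} and \ref{T:8}, so your shortcut buys a shorter proof of this statement at the cost of not producing the relations needed downstream.
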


The difference with Chevalley groups is that now we have 
to throw in elementary commutators for {\it two\/} roots,
one long root and one short root. For Chevalley groups, one 
{\it long\/} root would suffice. Conversely, when 2 is invertible 
for types $\B_l,\C_l,\F_4$ and 3 is invertible for type $\G_2$,
one {\it short\/} toot would suffice. For unitary groups, 
modulo $\EU(2n,(\FidealI)\circ(\FidealJ))$ we can still establish 
a cognate relation between short root type elementary 
commutators and long root type elementary commutators, 
Lemma~\ref{long-short}. However, unlike Chevalley groups, for unitary 
groups the elements of long root subgroups are parametrised by
the form parameter $\Lambda$, whereas the elements of short 
root subgroups are parametrised by the ring $A$ itself. This
means that now we could dispose of {\it some\/} short type 
elementary commutators, yet not all of them. In the opposite 
direction, the long type elementary commutators, one of whose arguments sits in the corresponding {\it minimal\/} ideal form 
parameter could be discarded --- but not all of them! This can
be done when one of the form parameters is either minimal,
or as large as possible --- not just maximal! --- see \S~9.
\par
Observe that the proof of this theorem consists of two
independent parts. The possibility to express the third type 
of generators as products of elementary conjugates and elementary commutators in $[\EU(2n,\FidealI),\EU(2n,\FidealJ)]$ will be called 
the {\it first claim\/} of Theorem~\ref{generators}. The much more arduous bid 
that modulo $\EU(2n,(\FidealI)\circ(\FidealJ))$ all elementary 
commutators can be expressed in terms of such commutators 
in one short and one long positions, will be called the {\it second 
claim\/} of Theorem~\ref{generators}.

Let us mention another important trait. The published
proofs of Theorem B heavily depended on some version of
Theorem A, and thus, ultimately, on localisation. The proof
of Theorem~\ref{generators} given below in \S\S~5--7 is purely 
{\it elementary\/}\footnote{In the technical sense that it does 
not invoke anything apart from the usual Steinberg relations.}
and thus works already at the level of {\it unitary Steinberg groups\/},
see \cite{B1, B2, lavrenov}. The only reason why we do not 
state our results in this generality is to skip discussion 
of {\it relative unitary Steinberg groups\/}. The details and technical facts are not readily available in the literature, and would noticeably increase the length of the present paper.


\subsection{Unrelativisation}
Since both remaining types of generators listed in Theorem~\ref{generators}  
already belong 
to the mixed commutator of the {\it unrelative\/} elementary subgroups
$[\FU(2n,\FidealI), \FU(2n, \FidealJ)]$, we get the following amazing
equality. Morally, it shows that the commutator of {\it relative\/} elementary subgroups $[\EU(2n,\FidealI),\EU(2n, \FidealJ)]$ is
smaller, than one expects. Observe that it only depends on the
[relatively] easy first claim of Theorem~\ref{generators} whose proof is 
completed already in \S~5. For $\GL(n,R)$ the corresponding result 
is \cite{NV18}, Theorem 2 (for commutative rings, with a completely 
different proof), and \cite{NZ2}, Theorem 1 (for arbitrary associative rings). For $\Sp(2n,R)$ and $\SO(2n,R)$ it is a special case of 
\cite{NZ1}, Theorem 1.2.

\begin{The}\label{equality2}
Let $(\FormR)$ be any associative form ring, let $(\FidealI)$ 
and $(\FidealJ)$ be two form ideals of the form ring 
$(\FormR)$ and let $n\ge 3$.  Then the mixed commutator 
subgroup $[\FU(2n,\FidealI), \FU(2n, \FidealJ)]$ is normal in 
$\EU(2n,\FormR)$. Furthermore, we have the following
commutator identity
$$   [\FU(2n,\FidealI), \FU(2n, \FidealJ)] =
[\EU(2n,\FidealI), \EU(2n, \FidealJ)]. $$

\end{The}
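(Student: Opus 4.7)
The plan is to deduce both assertions directly from the first claim of Theorem~\ref{generators}. The inclusion $[\FU(2n,\FidealI),\FU(2n,\FidealJ)]\subseteq[\EU(2n,\FidealI),\EU(2n,\FidealJ)]$ is automatic from $\FU\subseteq\EU$. For the reverse inclusion, Theorem~\ref{generators} supplies an explicit generating set of the right-hand side, so it suffices to check that each of $Y_{ij}(a,b)$, $Z_{ij}(ab,c)$, $Z_{ij}(ba,c)$ already sits inside $[\FU(2n,\FidealI),\FU(2n,\FidealJ)]$. Once the two mixed commutator subgroups agree, the stated normality in $\EU(2n,\FormR)$ is automatic, since both $\EU(2n,\FidealI)$ and $\EU(2n,\FidealJ)$ are normal in $\EU(2n,\FormR)$ by construction, hence so is their mutual commutator subgroup.

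The $Y$-type generators are immediate, because $Y_{ij}(a,b)=[T_{ji}(a),T_{ij}(b)]$ is literally a commutator of a generator of $\FU(2n,\FidealI)$ with a generator of $\FU(2n,\FidealJ)$. The substantive step is the $Z$-type generators, and this is where $n\ge 3$ will be used. Pick an index $k$ with $k\neq\pm i,\pm j$. In the short-root case $i\neq\pm j$ the standard Chevalley relation $[T_{ik}(a),T_{kj}(b)]=T_{ij}(ab)$ factors $T_{ij}(ab)$ as $[u,v]$ with $u=T_{ik}(a)\in\FU(2n,\FidealI)$ and $v=T_{kj}(b)\in\FU(2n,\FidealJ)$. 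Conjugating by $T_{ji}(c)$ and expanding ${}^{T_{ji}(c)}u$ and ${}^{T_{ji}(c)}v$ via Chevalley produces correction transvections whose parameters still lie in $I$, respectively $J$ (because $IA,\,AI\subseteq I$ and similarly for $J$), so the conjugates remain in the corresponding unrelative elementary subgroups; hence $Z_{ij}(ab,c)=[{}^{T_{ji}(c)}u,\,{}^{T_{ji}(c)}v]\in[\FU(2n,\FidealI),\FU(2n,\FidealJ)]$, and $Z_{ij}(ba,c)$ is handled symmetrically.

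The delicate case will be $i=-j$, where $T_{ij}(ab)$ is a long-root element parametrised by the form parameter, and the appropriate factorisation mixes short-root commutators with a long-root transvection, involving the involution $\lambda$ and the arithmetic of $\Gamma$ and $\Delta$. I expect this to be the main technical obstacle: one must write each such long-root element $T_{i,-i}(\xi)$, for $\xi$ in the relevant subset of $\Gamma\circ\Delta$, as a product of commutators of short-root transvections drawn from $\FU(2n,\FidealI)$ and $\FU(2n,\FidealJ)$, and then propagate that decomposition through the conjugation by $T_{ji}(c)$ while keeping track of both short-root and long-root correction terms. This is precisely the bookkeeping that the unitary commutator calculus of Lemmas~6--12 in \S\S~5--6 is designed to handle, so the same identities should carry the argument through and complete the proof.
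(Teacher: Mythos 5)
Your strategy matches the paper's: reduce to the generating set from Theorem~\ref{generators}, show each generator already lies in $[\FU(2n,\FidealI),\FU(2n,\FidealJ)]$, and deduce the normality claim from the normality of both relative elementary subgroups. Your treatment of the $Y$-generators is correct, your Chevalley computation for the short-root $Z$-generators goes through (with corrections $T_{jk}(ca)$ and $T_{ki}(-bc)$ whose parameters stay in $I$ and $J$ respectively), and the normality argument is fine.

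However, there is a genuine gap at the long-root $Z$-generators, and the place you point to for closing it is off target. The paper does not re-derive either root case by hand: it observes that $Z_{ij}(ab,c)$ is an $\EU(2n,\FormR)$-conjugate of an elementary transvection of level $(\FidealI)\circ(\FidealJ)$, hence lies in $\EU(2n,(\FidealI)\circ(\FidealJ))$, and then invokes Lemma~5 --- the chain $\EU(2n,(\FidealI)\circ(\FidealJ))\le[\FU(2n,\FidealI),\FU(2n,\FidealJ)]\le\cdots$ quoted from \cite{RNZ4,RNZ5} --- which delivers the containment for short and long root positions simultaneously, with no computation. Lemmas~6--12, which you cite as the machinery that ``should carry the argument through,'' do not prove this containment: Lemmas~\ref{Op-1} and~\ref{Op-2} give centrality of $Y_{ij}(a,b)$ modulo $\EU(2n,(\FidealI)\circ(\FidealJ))$, Lemmas~\ref{form-1} and~\ref{form-2} dispose of the third type of generator, and the remaining lemmas roll elementary commutators between positions --- and all of them \emph{presuppose} Lemma~5 rather than establish it. If you insist on the direct route, the long-root step is real work: for each type of generator of $\Gamma\circ\Delta$ --- namely $a-\lambda\bar a$ and $a\gamma\bar a$ with $a\in I\circ J$, $\gamma\in\Lambda$, together with $b\gamma\bar b$ for $b\in J$, $\gamma\in\Gamma$, and $a\delta\bar a$ for $a\in I$, $\delta\in\Delta$ --- you must realise $T_{i,-i}(\xi)$ as a product of mixed commutators using relations (R5) and (R6), keeping careful track of the $\lambda$-twists, and then propagate the conjugation by $T_{ji}(c)$. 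Citing Lemma~5 makes all of this unnecessary.
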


In particular, in conjunction with Theorem A this shows 
that the birelative standard commutator formula also holds 
in the following unrelativised form. Again, for $\GL(n,R)$
this is \cite{NV18}, Theorem 1 and \cite{NZ2}, Theorem 3,
whereas for Chevalley groups it is \cite{NZ1}, Theorem 1.3.

\begin{The}\label{unrelative}
Let $R$ be a commutative ring, $(A,\Lambda)$ be a form ring
such that $A$ is a quasi-finite $R$-algebra. Further, let $(\FidealI)$
and $(\FidealJ)$ be two form ideals of the form ring $(\FormR)$
and let $n\ge 3$.  Then we have a unrelative commutator identity
$$   [\GU(2n,\FidealI),\EU(2n,\FidealJ)] =
[\FU(2n,\FidealI),\FU(2n,\FidealJ)]. $$
\noindent
When $A$ is itself commutative, one even has
$$   [\CU(2n,\FidealI),\EU(2n,\FidealJ)] =
[\FU(2n,\FidealI),\FU(2n,\FidealJ)]. $$
\end{The}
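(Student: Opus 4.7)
The plan is to derive Theorem \ref{unrelative} as an immediate corollary of Theorem A combined with Theorem \ref{equality2}, with essentially no new computation. The structural observation is that Theorem A equates the mixed commutator involving the large [principal or full] congruence subgroup with the mixed commutator of the \emph{relative} elementary subgroups, while Theorem \ref{equality2} identifies the latter with the mixed commutator of the \emph{unrelative} elementary subgroups. Chaining these two equalities delivers the statement.

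Concretely, I would first record the trivial inclusion
$$ [\FU(2n,\FidealI), \FU(2n, \FidealJ)] \ \subseteq \ [\GU(2n,\FidealI), \EU(2n, \FidealJ)], $$
which follows from the chain $\FU(2n,\FidealI) \subseteq \EU(2n,\FidealI) \subseteq \GU(2n,\FidealI)$ together with $\FU(2n,\FidealJ) \subseteq \EU(2n,\FidealJ)$. For the non-trivial direction I would invoke Theorem A to rewrite $[\GU(2n,\FidealI),\EU(2n,\FidealJ)]$ as $[\EU(2n,\FidealI), \EU(2n, \FidealJ)]$, and then apply Theorem \ref{equality2} to identify this in turn with $[\FU(2n,\FidealI), \FU(2n, \FidealJ)]$. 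The second, commutative, version is handled identically: replace $\GU$ by $\CU$ throughout and use the second assertion of Theorem A.

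Since both ingredients are by now available, there is no substantive obstacle. The only point I would flag explicitly is the division of hypotheses: the quasi-finite $R$-algebra assumption, as well as the commutativity of $A$ in the second identity, enter \emph{only} through Theorem A, whereas Theorem \ref{equality2} holds under the bare assumption that $(\FormR)$ is an associative form ring with $n \ge 3$. Consequently the present theorem inherits exactly the hypotheses of Theorem A and nothing more, which is the best one could hope for. If anything deserves the name of a subtlety, it is to make sure that the two theorems are being applied with identical form ideals $(\FidealI)$ and $(\FidealJ)$ on both sides of the chain, so that the resulting equalities may be concatenated verbatim; this is transparent from the statements.
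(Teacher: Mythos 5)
Your proposal is correct and follows exactly the paper's own route: the authors explicitly present Theorem~\ref{unrelative} as the conjunction of Theorem~A with Theorem~\ref{equality2}, just as you do. The remarks you add about which hypotheses enter through which ingredient are accurate and consistent with the paper's discussion.
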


The following result is a unitary analogue of the unrelative 
normality theorem proven for $\GL(n,R)$ by Bogdan Nica 
and ourselves, see \cite{Nica, NV18, NZ3}. It is an immediate 
corollary of
our Theorem~\ref{unrelative}, if you set there $(\FidealI)=(\FidealJ)$.

\begin{The}\label{T:4}
Let $R$ be a commutative ring, $(A,\Lambda)$ be a form ring
such that $A$ is a quasi-finite $R$-algebra. Further, let $(\FidealI)$
be a form ideals of the form ring $(\FormR)$ and let $n\ge 3$. 
Then $\FU(2n,\FidealI)$ is normal in $\GU(n,\FidealI)$.
\end{The}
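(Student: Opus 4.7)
The plan is to derive the statement as an immediate specialisation of Theorem~\ref{unrelative} to the case $(\FidealJ)=(\FidealI)$. Normality of $\FU(2n,\FidealI)$ in $\GU(2n,\FidealI)$ is the assertion that $ghg^{-1}\in\FU(2n,\FidealI)$ for every $g\in\GU(2n,\FidealI)$ and every $h\in\FU(2n,\FidealI)$. Since $ghg^{-1}=[g,h]\cdot h$ and the second factor is already in $\FU(2n,\FidealI)$, the whole matter reduces to showing that the commutator $[g,h]$ lies in the \emph{unrelative} elementary subgroup $\FU(2n,\FidealI)$.

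First I would note that $\FU(2n,\FidealI)\subseteq\EU(2n,\FidealI)$, because by definition $\EU(2n,\FidealI)$ is the normal closure of $\FU(2n,\FidealI)$ in $\EU(2n,\FormR)$. Hence
$$[g,h]\in\big[\GU(2n,\FidealI),\EU(2n,\FidealI)\big],$$
and applying Theorem~\ref{unrelative} with $(\FidealJ)=(\FidealI)$ yields
$$\big[\GU(2n,\FidealI),\EU(2n,\FidealI)\big]=\big[\FU(2n,\FidealI),\FU(2n,\FidealI)\big]\subseteq\FU(2n,\FidealI).$$
It follows that $[g,h]\in\FU(2n,\FidealI)$, so $ghg^{-1}=[g,h]\cdot h\in\FU(2n,\FidealI)$, as required. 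The commutative case poses no additional difficulty: one simply substitutes $\CU(2n,\FidealI)$ for $\GU(2n,\FidealI)$ and invokes the second, stronger form of Theorem~\ref{unrelative}.

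There is essentially no obstacle at this step: all of the substantive content has been absorbed into Theorem~\ref{unrelative}, which itself combines the standard commutator formula (Theorem~\ref{standard}) with the unrelativised equality of mixed commutator subgroups (Theorem~\ref{equality2}), the latter resting on the first claim of Theorem~\ref{generators}. Once those upstream results are in place, the present statement is a one-line manipulation; it should be regarded as the unitary analogue of the unrelative normality theorem for $\GL(n,R)$ proven by Nica and the authors \cite{Nica, NV18, NZ3}, obtained here by a direct diagonal specialisation $(\FidealJ)=(\FidealI)$ of the birelative unrelative commutator formula.
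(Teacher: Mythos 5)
Your proof is correct and follows exactly the route the paper intends: the authors explicitly state that Theorem~\ref{T:4} is an immediate corollary of Theorem~\ref{unrelative} on setting $(\FidealJ)=(\FidealI)$, and your argument spells out precisely that one-line specialisation, including the harmless typo $\GU(n,\FidealI)$ for $\GU(2n,\FidealI)$ in the statement.
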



\subsection{Elementary commutators}
The proof of the second claim of Theorem~\ref{generators} is the gist of the
present paper, and proceeds as follows. First, in \S~6 we prove
that the elementary commutators $Y_{ij}(a,b)$ are central in
the absolute elementary group modulo 
$\EU(2n,(\FidealI)\circ(\FidealJ))$. 
Recall that here 
$$ (\FidealI)\circ(\FidealJ)=\big(IJ+JI,{}^J\Gamma+{}^I\Delta+\Gamma_{\min}(IJ+JI)\big) $$
\noindent
denotes the symmetrised product of form ideals, see \S~2 for details.
\par

Since by that time we 
already know that together with $\EU(2n,(\FidealI)\circ(\FidealJ))$
these commutators generate $[\FU(2n,\FidealI),\FU(2n, \FidealJ)]$,
this result can be stated as follows. For $\GL(n,R)$ and Chevalley 
groups this is \cite{NZ2}, Theorem 2, and \cite{NZ4}, Theorem 2,
respectively.

\begin{The}\label{equality}
Let $(\FormR)$ be any associative form ring, let $(\FidealI)$ 
and $(\FidealJ)$ be two form ideals of the form ring 
$(\FormR)$ and let $n\ge 3$.  Then 
$[\FU(2n,\FidealI),\FU(2n, \FidealJ)]$ is central in 
$\EU(2n,\FormR)$ modulo $\EU(2n,(\FidealI)\circ(\FidealJ))$.
\end{The}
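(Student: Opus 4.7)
The plan is to combine the first claim of Theorem~\ref{generators} (established already in \S~5 of the present paper) with Theorem~\ref{equality2} to reduce the desired centrality to a check on two explicit families of generators, and then verify centrality on each family directly. Theorem~\ref{equality2} identifies $[\FU(2n,\FidealI),\FU(2n,\FidealJ)]$ with $[\EU(2n,\FidealI),\EU(2n,\FidealJ)]$, while the first claim of Theorem~\ref{generators} tells us this common commutator subgroup is generated by the elementary conjugates $Z_{ij}(ab,c)$, $Z_{ij}(ba,c)$ together with the elementary commutators $Y_{ij}(a,b)$, where $a\in(\FidealI)$, $b\in(\FidealJ)$, $c\in(\FormR)$. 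Hence it is enough to show that $[T_{hk}(c'),\,Z_{ij}(ab,c)]$ and $[T_{hk}(c'),\,Y_{ij}(a,b)]$ lie in $\EU(2n,(\FidealI)\circ(\FidealJ))$ for every elementary generator $T_{hk}(c')$ of $\EU(2n,\FormR)$.

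The $Z$-type generators dispose of themselves. When $i\ne\pm j$ the product $ab$ lies in $IJ$; when $i=-j$ the corresponding parameter lies in the form-parameter component of $(\FidealI)\circ(\FidealJ)$. Either way $T_{ij}(ab)\in\FU(2n,(\FidealI)\circ(\FidealJ))$, and since $\EU(2n,(\FidealI)\circ(\FidealJ))$ is by construction normal in $\EU(2n,\FormR)$, its $\EU(2n,\FormR)$-conjugate $Z_{ij}(ab,c)$ already sits inside $\EU(2n,(\FidealI)\circ(\FidealJ))$. Centrality is then vacuous, and the same argument applies to $Z_{ij}(ba,c)$.

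The core of the theorem is therefore centrality of the elementary commutators $Y_{ij}(a,b)=[T_{ji}(a),T_{ij}(b)]$. The natural tool is the Hall--Witt identity applied to the triple $\bigl(T_{hk}(c'),T_{ji}(a),T_{ij}(b)\bigr)$, which rewrites $[T_{hk}(c'),Y_{ij}(a,b)]$, up to conjugation, as a product of the two ``rotated'' triple commutators whose inner brackets are $[T_{hk}(c'),T_{ji}(a)]$ and $[T_{hk}(c'),T_{ij}(b)]$. By the Steinberg-type relations for Bak's hyperbolic unitary group, these inner commutators expand as products of transvections whose parameters are bilinear expressions in $c'$ and $a$, respectively $c'$ and $b$. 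Taking the outer commutator with the remaining transvection $T_{ij}(b)$ or $T_{ji}(a)$ then inserts a second factor from the complementary ideal, forcing each output transvection to have a parameter in $IJ+JI$ or in ${}^J\Gamma+{}^I\Delta+\Lambda_{\min}(IJ+JI)$, that is, in $(\FidealI)\circ(\FidealJ)$.

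What remains is a finite case analysis on the position of $\{h,k\}$ relative to $\{\pm i,\pm j\}$. Most configurations vanish outright by disjointness of the roots involved, and the substantive overlap cases are precisely the content of Lemmas~6--12 of the present paper. The main obstacle will be the long-root configuration $i=-j$ together with the mixed short/long overlaps: the asymmetry between short-root parameters in $A$ and long-root parameters in $\Lambda$ forces a careful tracking of the $\lambda$-involution and of the $\Lambda_{\min}(IJ+JI)$ correction term when verifying that every produced transvection does belong to the form-parameter component of $(\FidealI)\circ(\FidealJ)$. Once this bookkeeping is in place configuration by configuration, the Hall--Witt reduction delivers centrality of $Y_{ij}(a,b)$ at once, and with it the theorem.
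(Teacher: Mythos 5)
Your overall architecture --- reduce to the two families of generators and then check centrality of each --- is the same as the paper's, and your treatment of the $Z$-type generators is correct. But there are two problems. First, the reduction as you set it up is circular: in the paper, Theorem~\ref{equality2} and the first claim of Theorem~\ref{generators} are proved in \S~6 \emph{from} Lemmas~\ref{Op-1} and~\ref{Op-2} of \S~5, and those two lemmas \emph{are} the proof of Theorem~\ref{equality} (centrality of $Y_{ij}(a,b)$ under conjugation by elementary generators, for short and long roots respectively). You cannot quote those downstream results here. Fortunately you do not need them: since $\FU(2n,\FidealI)$ and $\FU(2n,\FidealJ)$ are generated by elementary transvections, $[\FU(2n,\FidealI),\FU(2n,\FidealJ)]$ is generated by conjugates of the commutators $[T_{ij}(a),T_{kl}(b)]$ of generators; all of these except the opposite-position ones $Y_{ij}(a,b)$ land in $\EU(2n,(\FidealI)\circ(\FidealJ))$ by the Steinberg relations, and $\EU(2n,(\FidealI)\circ(\FidealJ))$ is contained in $[\FU(2n,\FidealI),\FU(2n,\FidealJ)]$ by the last lemma of \S~4. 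That elementary reduction is all the paper uses.

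Second, and more seriously, the Hall--Witt mechanism you propose for the $Y$-type generators breaks down in exactly the cases that carry the difficulty. If $T_{hk}(c')$ sits in a position opposite to $(j,i)$ or $(i,j)$ (for instance $(h,k)=(i,j)$), then one of the inner commutators $[x^{-1},y]$ or $[z^{-1},x]$ in (C3) is itself a commutator of two transvections in opposite positions; it is not a product of elementary transvections given by (R3)--(R6), so the outer commutator cannot be expanded as you describe, and you are left with a problem of the same type you started with. The paper instead computes $[T_{kl}(c),Y_{ij}(a,b)]$ explicitly for the generic positions and, for the degenerate ones, factors $T_{kl}(c)=[T_{kh}(c),T_{hl}(1)]$ with $h\ne\pm i,\pm j$ to reduce to the generic case; the long-root commutators $Y_{i,-i}(a,b)$ then require a separate and considerably more delicate analysis (Lemma~\ref{Op-2}). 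Your sketch defers all of this to ``Lemmas~6--12 of the present paper'', which is to say the core of the proof is not supplied.
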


In other words,
$$ \Big[\big[\FU(2n,\FidealI),\FU(2n,\FidealJ)\big],
\EU(2n,\FormR)\Big]\le \EU(2n,(\FidealI)\circ(\FidealJ)). $$
\noindent
In particular, it implies that the quotient 
$$ \big[\FU(2n,\FidealI),\FU(2n,\FidealJ)\big]/ 
\EU(2n,(\FidealI)\circ(\FidealJ)) $$
\noindent
is itself abelian. This readily implies additivity of the elementary 
commutator with respect to its arguments, and other similar
useful properties, collected in Theorem~\ref{symb}, that are employed 
in the proofs of subsequent results.

However, the focal point of the present paper is \S~7, 
where we prove that modulo $\EU(2n,(\FidealI)\circ(\FidealJ))$
all elementary commutators of the same root type are equivalent.
Moreover, for the short root type they are balanced with respect 
to the factors from $R$, both on the left and on the right. 
For the long root type the balancing property is more 
complicated, and only holds for the quadratic (=Jordan)
multiplication. In the case of the usual symplectic group,
where $A$ is a commutative ring with trivial involution,
it corresponds to the multiplication by squares, see \cite{NZ4},
Theorem 5.

\begin{The}\label{The:Op-2.1}
Let $(\FormR)$ be an associative form ring with $1$, $n\ge 3$, 
and let $(\FidealI)$, $(\FidealJ)$ be form ideals of $(\FormR)$. 
\par\smallskip
$\bullet$
Then for any  $ i\neq \pm j$, any $h\ne\pm l$  with 
$h,l \neq \pm i, \pm j$, and $a\in I$, $b\in J$, $c,d\in A$, 
the elementary commutator
$$ Y_{ij}(cad,b)\equiv Y_{hl}(a,dbc)
\pamod{\EU(2n,(\FidealI)\circ(\FidealJ))}.  $$ 
\par\smallskip
$\bullet$ Then for any  $ -n\le i\le n$, any
$-n\le k\le n$, and $a\in \lambda^{-(\epsilon(i)+1)/2}\Gamma$, $b\in \lambda^{(\epsilon(i)-1)/2}\Delta$ $c\in A$, the elementary
commutator
$$ Y_{i,-i}(ca\bar c,b)\equiv Y_{k,-k}(\lambda^{(\epsilon(i)-\epsilon(k))/2}a,-\lambda^{(\epsilon(k)-\epsilon(i))/2}\bar c  bc)\pamod{\EU(2n,(\FidealI)\circ(\FidealJ))}.  $$ 
\end{The}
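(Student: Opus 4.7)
The plan is to reduce both statements to Theorem~\ref{equality}, which asserts that $[\FU(2n,\FidealI),\FU(2n,\FidealJ)]$ is central in $\EU(2n,\FormR)$ modulo $N:=\EU(2n,(\FidealI)\circ(\FidealJ))$. Out of this centrality I extract two consequences that do the heavy lifting: first, the map $(a,b)\mapsto Y_{ij}(a,b)$ is bi-additive modulo $N$ (a routine commutator identity in the abelian quotient $[\FU(2n,\FidealI),\FU(2n,\FidealJ)]/N$); second, every $Y_{ij}(a,b)$ is invariant under conjugation by arbitrary elements of $\EU(2n,\FormR)$ modulo $N$. These two properties carry the bulk of both claims.

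For the short-root claim, the idea is to extract the scalar $c$ from the first argument of a $Y$-commutator by means of the short-root Chevalley factorisation
\[
T_{ji}(cad)=[T_{jh}(c),T_{hi}(ad)]\qquad(h\neq\pm i,\pm j),
\]
and then to substitute this into $Y_{ij}(cad,b)=[T_{ji}(cad),T_{ij}(b)]$ and apply the Hall--Witt identity. Two of the three cyclic terms in the Hall--Witt expansion have the form $[x,[y,z]]$ with $[y,z]\in[\FU(2n,\FidealI),\FU(2n,\FidealJ)]$ and therefore vanish modulo $N$ by centrality. The surviving term, rewritten via the short-root Steinberg relation, produces a $Y$-commutator at shifted indices in which $c$ has been transferred to the $\FU(2n,\FidealJ)$-argument. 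Applying the same manoeuvre to $d$ transfers it across as well. Finally, since for any admissible short-root target pair $(h,l)$ there is an elementary product realising the required permutation of indices, invariance of $Y$ modulo $N$ under conjugation yields $Y_{hl}(a,dbc)$. The hypothesis $n\ge 3$ is used throughout to secure enough free indices for the required factorisations.

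For the long-root claim, the strategy is formally the same, but the short/short Chevalley relation is replaced by the mixed short/long relation
\[
[T_{ki}(c),T_{i,-i}(a)]=T_{k,-i}(ca)\cdot T_{k,-k}\bigl(-\lambda^{(\e(i)-\e(k))/2}ca\bar c\bigr),
\]
in which the Jordan expression $ca\bar c$ appears as the coefficient of a long-root transvection. Substituting this into a computation of $Y_{k,-k}$, applying the Hall--Witt identity, and discarding the nested commutators that lie in $N$ by centrality, one reads off the required equivalence. The $\lambda$-prefactors track the sign change of the form induced by basis indices of differing $\e$-parities, while the expression $\bar c b c$ on the right-hand side is forced by the quadratic shape of the long-root expansion.

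The principal obstacle is the long-root case: unlike the short-root transfer, here the scalar-extraction identity is intrinsically quadratic in $c$, so there is no way to reduce to additivity in $a$. The delicate point is verifying that all error terms produced by the Hall--Witt expansion actually lie in $\EU(2n,(\FidealI)\circ(\FidealJ))$. For this, the precise shape of the symmetrised form ideal $(\FidealI)\circ(\FidealJ)=(IJ+JI,\,{}^J\Gamma+{}^I\Delta+\Gamma_{\min}(IJ+JI))$ is indispensable, since it is exactly the summand $\Gamma_{\min}(IJ+JI)$ that absorbs the quadratic form-parameter contributions produced by the long-root expansion.
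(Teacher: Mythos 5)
Your overall strategy --- extract the scalar by writing a transvection as a Steinberg commutator, expand, and discard the error terms using the centrality of $[\FU(2n,\FidealI),\FU(2n,\FidealJ)]$ modulo $N=\EU(2n,(\FidealI)\circ(\FidealJ))$ --- is exactly the one used in Lemmas~\ref{Op-2.1} and~\ref{Op-2.2}; the paper merely unrolls the expansion by hand instead of invoking (C3). However, your Hall--Witt bookkeeping in the short-root case is wrong as stated. The three cyclic terms of (C3) multiply to $1$; one of them is (a conjugate of) $Y_{ij}(cad,b)$ itself, a second is the target $[T_{ih}(bc),T_{hi}(ad)]^{\pm1}$ --- this is precisely the term whose inner commutator lies in $[\FU(2n,\FidealI),\FU(2n,\FidealJ)]$, and centrality does \emph{not} make it vanish, it only lets you strip the outer conjugation --- and only the third term, whose inner commutator $[T_{hi}(-ad),T_{ij}(b)]=T_{hj}(-adb)$ already lies in $\FU(2n,(\FidealI)\circ(\FidealJ))$, dies modulo $N$. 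If two of the three cyclic terms vanished, as you claim, the identity would force $Y_{ij}(cad,b)\equiv e$, which is false. The computation is repairable along the lines just indicated, after which it coincides with the paper's first move; the second move, transferring $d$, then goes through as you say.

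The long-root case, which you rightly identify as the crux, is not actually proved. Relation (R6) does not express $T_{i,-i}(ca\bar c)$ as a single commutator: one has $T_{i,-i}(-ca\bar c)=T_{i,-k}(\lambda^{(\epsilon(i)-\epsilon(k))/2}ca)\cdot[T_{i,k}(c),T_{k,-k}(-\lambda^{(\epsilon(i)-\epsilon(k))/2}a)]$, and the short-root correction factor $T_{i,-k}(\lambda^{(\epsilon(i)-\epsilon(k))/2}ca)$ has level $I$ only, not $(\FidealI)\circ(\FidealJ)$. Consequently Hall--Witt does not apply in the clean form you describe, and the genuinely delicate terms are not ``quadratic form-parameter contributions absorbed by $\Gamma_{\min}(IJ+JI)$'': they are short-root transvections of level $I$, conjugated by $T_{-i,i}(b)$. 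The bulk of the paper's proof of Lemma~\ref{Op-2.2} consists in showing that these leftover factors reassemble into the commutator $[T_{i,-i}(ca\bar c)T_{i,-k}(\lambda^{(\epsilon(k)-\epsilon(i))/2}ca),\,T_{-i,k}(bc)T_{-k,k}(-\lambda^{(\epsilon(k)-\epsilon(i))/2}\bar c bc)]$, whose four constituent Steinberg commutators are then checked individually to land in $N$. Without this recombination your argument does not close.
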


The calculation behind these congruences is the highlight of the
whole theory. Inherently, it is just a birelative incarnation of a
classical calculation that appeared dozens of times in the 
algebraic K-theory and the theory of algebraic groups since
mid 60-ies, see \S~12 for a terse historical medley.


\subsection{Further corollaries}
As another illustration of the power of Theorem~\ref{generators}, we show that
it allows to [almost completely] lift commutativity conditions in 
some of the principal results of \cite{RNZ1,RNZ4,RNZ5}.

\par
Under the additional assumptions such as quasi-finiteness
the following result for any $n\ge 3$ is \cite{RNZ5}, Theorem 7.
From Theorem~\ref{generators} we can derive that for $n\ge 4$ a similar result 
holds for arbitrary associative form rings. For $\GL(n,R)$ 
such generalisation was already obtained in  \cite{NZ2}. We 
believe this could be also done for $n=3$, see Problem 3, but in
that case it would require formidable calculations.

\begin{The}\label{T:7}
Let $(\FormR$) be any associative form ring with $1$, let $n\ge 4$, 
and let $(I_i,\Gamma_i)\unlhd R$, $i=1,\ldots,m$,  be form ideals 
of $(\FormR)$. Consider an arbitrary arrangement of brackets\/ 
$\llbracket \ldots\rrbracket$
with the cut point\/ $s$. Then one has
\begin{multline*}
\Big\llbracket \EU(2n,I_1,\Gamma_1),\EU(2n,I_2,\Gamma_2),\ldots,\EU(2n,I_m,\Gamma_m)\Big\rrbracket=\\
\Big[\EU(2n,(I_1,\Gamma_1)\circ\ldots\circ( I_s,\Gamma_s)),\EU(2n,(I_{s+1},\Gamma_{s+1})\circ\ldots\circ (I_m,\Gamma_m)\Big], 
\end{multline*}
\noindent
where the bracketing of symmetrised products on the right hand side coincides with the bracketing of the commutators on the left hand side.
\end{The}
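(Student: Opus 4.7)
The plan is induction on $m$. The base case $m=2$ is tautological: the only bracketing is the single commutator with $s=1$, which matches the right-hand side verbatim. For the inductive step, let the outermost bracket cut at position $s$, so
$$\llbracket \EU(2n,I_1,\Gamma_1),\ldots,\EU(2n,I_m,\Gamma_m)\rrbracket=[L,R],$$
where $L$ is a smaller bracketed commutator on the first $s$ factors and $R$ one on the last $m-s$. Applying the induction hypothesis to each of $L$ and $R$, there exist form ideals $Q_1,Q_2$ with $Q_1\circ Q_2=P_L:=(I_1,\Gamma_1)\circ\ldots\circ(I_s,\Gamma_s)$ such that $L=[\EU(2n,Q_1),\EU(2n,Q_2)]$, and analogously $R=[\EU(2n,Q_3),\EU(2n,Q_4)]$ with $Q_3\circ Q_4=P_R:=(I_{s+1},\Gamma_{s+1})\circ\ldots\circ(I_m,\Gamma_m)$; here one uses the associativity of the symmetrised product of form ideals.

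The inclusion $[L,R]\subseteq[\EU(2n,P_L),\EU(2n,P_R)]$ follows from Theorem~\ref{generators}: every generator of a mixed commutator of two relative elementary subgroups lies already in the relative elementary subgroup at the symmetrised product level. Applied recursively to $L$ and $R$ this yields $L\le\EU(2n,P_L)$ and $R\le\EU(2n,P_R)$, and the inclusion follows.

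The reverse inclusion is the core of the argument. By Theorem~\ref{generators}, $[\EU(2n,P_L),\EU(2n,P_R)]$ is generated by the elementary conjugates $Z_{ij}(ab,c)$, $Z_{ij}(ba,c)$ and the elementary commutators $Y_{ij}(a,b)$, with $a$ at level $P_L$, $b$ at level $P_R$, and $c\in(\FormR)$. An element at level $P_L$ is, by definition and associativity of~$\circ$, a sum of iterated products and symmetrisations of elements drawn from $I_1,\ldots,I_s$ together with contributions from the form parameters. I would factor each root unipotent $T_{ij}(a)$ with such $a$ as an iterated Chevalley commutator of transvections in the individual $\EU(2n,I_k,\Gamma_k)$ --- the hypothesis $n\ge 4$ supplying the extra free index needed to write $T_{ij}(a_1\cdots a_s)$ as a commutator --- and then, by Hall--Witt rearrangements, rewrite each generator $Z_{ij}(\cdot,c)$ or $Y_{ij}(\cdot,\cdot)$ as a product of conjugates of elements lying in $[L,R]$. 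Theorem~\ref{equality2} (unrelativisation) will be invoked to pass freely between $[\EU,\EU]$ and $[\FU,\FU]$ at every depth, and Theorem~\ref{equality} (centrality modulo the outer symmetrised product) used to absorb ordering ambiguities at each recursive step.

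The main obstacle is the long-root case, where the form parameter of a symmetrised product is not merely $IJ+JI$ and interacts nontrivially with the involution, so that the Chevalley-style unpacking above does not directly produce a long-root transvection at the correct level. Here Theorem~\ref{The:Op-2.1} --- the balancing identities for long-root elementary commutators under quadratic multiplication --- is precisely what permits reducing a long-root generator at the outer level to a long-root commutator at the desired inner depth, modulo $\EU(2n,P_L\circ P_R)$, which is already contained in $[L,R]$ by the induction hypothesis. The short-root case is comparatively routine once the inductive bookkeeping has been set up.
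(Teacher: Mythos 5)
Your proposal has the two halves of the inductive step interchanged, and the half you dismiss as easy rests on a false inclusion. You justify $[L,R]\subseteq[\EU(2n,P_L),\EU(2n,P_R)]$ by claiming that every generator of a mixed commutator of two relative elementary subgroups ``lies already in the relative elementary subgroup at the symmetrised product level'', i.e.\ that $L=[\EU(2n,Q_1),\EU(2n,Q_2)]\le\EU(2n,Q_1\circ Q_2)$. That is not true: among the generators of Theorem~\ref{generators} only the elementary conjugates $Z_{ij}(ab,c)$, $Z_{ij}(ba,c)$ lie in $\EU(2n,(\FidealI)\circ(\FidealJ))$, whereas the elementary commutators $Y_{ij}(a,b)$ in general do not --- the nontriviality of the quotient $[\FU(2n,\FidealI),\FU(2n,\FidealJ)]/\EU(2n,(\FidealI)\circ(\FidealJ))$ is exactly what Theorems~\ref{equality} and~\ref{T:9} are about, and the chain of inclusions recalled at the end of \S4 only gives $[\EU(2n,\FidealI),\EU(2n,\FidealJ)]\le\GU(2n,(\FidealI)\circ(\FidealJ))$. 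So this inclusion is the genuinely hard half: one must commute the $Y$-type generators of $L$ against the generators of $R$ and show the result lands in $\EU(2n,P_L\circ P_R)$, which is what the explicit Steinberg computations of Lemmas~\ref{triple} and~\ref{quadruple} accomplish, using Lemmas~\ref{Op-1} and~\ref{Op-2} to strip conjugations and Lemmas~\ref{Op-2.1} and~\ref{Op-2.2} to roll $Y_{hk}(c,d)$ to a position disjoint from $Y_{ij}(a,b)$; it is this last step, not any factorisation of $T_{ij}(a_1\cdots a_s)$, that consumes the hypothesis $n\ge4$. Conversely, the inclusion you present as the core, $[\EU(2n,P_L),\EU(2n,P_R)]\subseteq[L,R]$, is immediate from $\EU(2n,Q_1\circ Q_2)\le[\FU(2n,Q_1),\FU(2n,Q_2)]\le L$ (and likewise for $R$); no unpacking of root unipotents into iterated Chevalley commutators, Hall--Witt rearrangement, or appeal to Theorem~\ref{The:Op-2.1} is needed there.

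Two further defects. The symmetrised product of form ideals is \emph{not} associative over non-commutative rings (see the footnote in \S10), so you may not regroup $Q_1\circ Q_2$ freely into $P_L$; this is precisely why the statement requires the bracketing on the right to mirror that on the left. And an induction with base $m=2$ cannot close: already at $m=3$ (or at a cut point $s=1$ or $s=m-1$, where one factor is a single relative elementary subgroup and admits no decomposition $Q_1\circ Q_2$) the inductive step \emph{is} the identity $[[\EU(2n,\FidealI),\EU(2n,\FidealJ)],\EU(2n,\FidealK)]=[\EU(2n,(\FidealI)\circ(\FidealJ)),\EU(2n,\FidealK)]$, which your argument does not establish. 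The workable scheme takes the triple and quadruple cases (Lemmas~\ref{triple} and~\ref{quadruple}) as the base $m\le4$ and reduces every longer commutator, according to the position of the cut point, to one of these.
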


Under the additional assumption that the absolute standard 
commutator formulae are satisfied, the following result 
is \cite{RNZ1}, Theorem 3. As we know from \cite{BV3, RH,
RH2, RNZ1}, this condition is satisfied for quasi-finite rings.
But from the work of Victor Gerasimov \cite{Gerasimov} it follows 
that 
some commutativity or finiteness assumptions are necessary
for the standard commutator formulae to hold.
Now, we are in a position to prove the following result for 
{\it arbitrary\/} associative form rings.

\begin{The}\label{T:8}
Let\/ $(\FormR)$ be any associative form ring and $n\ge 3$.
Then for any two comaximal form ideals\/ $(I,\Gamma)$ and 
$(J,\Delta)$ of the form ring $(R,\Lambda)$, $I+J=A$, one has 
the following equality
$$ [\EU(2n,I,\Gamma),\EU(2n,J,\Delta)]=
\EU(2n,(\FidealI)\circ(\FidealJ)). $$
\end{The}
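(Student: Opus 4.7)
The plan is to establish both inclusions of the claimed equality. The inclusion $\EU(2n,(I,\Gamma)\circ(J,\Delta))\subseteq [\EU(2n,I,\Gamma),\EU(2n,J,\Delta)]$ holds with no comaximality assumption, and is verified on the elementary generators of the left-hand side. For a short-root transvection $T_{ij}(ab)$ with $a\in I$, $b\in J$, $i\neq\pm j$, pick an auxiliary index $k\neq\pm i,\pm j$ (possible since $n\ge 3$); the Steinberg relation $T_{ij}(ab)=[T_{ik}(a),T_{kj}(b)]$ exhibits $T_{ij}(ab)$ as a commutator in $[\FU(2n,I,\Gamma),\FU(2n,J,\Delta)]=[\EU(2n,I,\Gamma),\EU(2n,J,\Delta)]$ by Theorem~\ref{equality2}. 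The same theorem asserts normality of this commutator subgroup in $\EU(2n,A,\Lambda)$, so arbitrary $\EU(2n,A,\Lambda)$-conjugates also lie inside it. The three kinds of long-root generators, corresponding to the summands ${}^J\Gamma$, ${}^I\Delta$ and $\Gamma_{\min}(IJ+JI)$ of the combined form parameter, are handled analogously via the Bak unitary Steinberg relations that express the long-root transvections $T_{i,-i}(b\alpha\bar b)$ and $T_{i,-i}(\omega\,ab)$ as commutators of elementary unitary transvections of the required shape.

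The reverse inclusion $[\EU(2n,I,\Gamma),\EU(2n,J,\Delta)]\subseteq\EU(2n,(I,\Gamma)\circ(J,\Delta))$ is where comaximality enters decisively. By Theorem~\ref{generators}, it suffices to show that the two types of generators of the commutator lie in $\EU(2n,(I,\Gamma)\circ(J,\Delta))$. The elementary conjugates $Z_{ij}(ab,c)$ and $Z_{ij}(ba,c)$ sit there directly, since $ab\in IJ$ is of the required kind and the subgroup is normal in $\EU(2n,A,\Lambda)$. For a short-root elementary commutator $Y_{ij}(a,b)$ with $a\in I$, $b\in J$, write $1=x+y$ with $x\in I$, $y\in J$, factor $T_{ij}(b)=T_{ij}(xb)T_{ij}(yb)$, and use the standard commutator expansion to obtain
$$ Y_{ij}(a,b)\equiv Y_{ij}(a,xb)\cdot Y_{ij}(a,yb)\pmod{\EU(2n,(I,\Gamma)\circ(J,\Delta))}, $$
the congruence holding because conjugation by $T_{ij}(xb)\in\EU(2n,(I,\Gamma)\circ(J,\Delta))$ is trivial modulo this normal subgroup. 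The first factor $Y_{ij}(a,xb)=[T_{ji}(a),T_{ij}(xb)]$ lies in $\EU(2n,(I,\Gamma)\circ(J,\Delta))$ by normality, since $xb\in IJ$. For the second factor, two applications of the short-root congruence of Theorem~\ref{The:Op-2.1}, first with $c=d=1$ and then with $c=1$, $d=y$, give the chain $Y_{ij}(a,yb)\equiv Y_{hl}(a,yb)\equiv Y_{ij}(ay,b)$ for auxiliary indices $(h,l)$; since $ay\in IJ$, the right-hand side lies in $\EU(2n,(I,\Gamma)\circ(J,\Delta))$, and therefore so does $Y_{ij}(a,b)$.

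The long-root elementary commutator $Y_{i,-i}(a,b)$ is the principal technical obstacle. Here $a$ and $b$ live in $\lambda$-twisted form parameters, the form parameter of the product ideal has the richer three-part structure ${}^J\Gamma+{}^I\Delta+\Gamma_{\min}(IJ+JI)$, and the shifting trick must be adapted to the long-root congruence of Theorem~\ref{The:Op-2.1}, namely
$$ Y_{i,-i}(ca\bar c,b)\equiv Y_{k,-k}\bigl(\lambda^{(\epsilon(i)-\epsilon(k))/2}a,-\lambda^{(\epsilon(k)-\epsilon(i))/2}\bar c b c\bigr). $$
Setting $c$ equal to $x$ or $y$ and carefully tracking both the bar-involution and the $\lambda$-twists $\lambda^{(\epsilon(i)-\epsilon(k))/2}$ drives either the first or the second argument into one of the three summands of the combined form parameter, so that the corresponding transvection already belongs to $\EU(2n,(I,\Gamma)\circ(J,\Delta))$; normality then closes the long-root case just as for short roots. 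Combined with the two-type generating set of Theorem~\ref{generators}, this yields the required inclusion and completes the proof.
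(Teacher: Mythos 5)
Your proof follows essentially the same strategy as the paper's: reduce to the elementary commutators $Y_{ij}(a,b)$ via Theorem~\ref{generators}, write $1=x+y$ with $x\in I$, $y\in J$, split by additivity, absorb the factor whose argument already lands in $I\circ J$, and roll the remaining one over to a position where it does. For the short-root case your argument is correct and complete; you factor the second slot $b=(x+y)b$ whereas the paper factors the first slot $a=a(a'+b')$, but by symmetry this is the same computation.

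However, the long-root case is where the real work is, and your sketch does not carry it through. The phrase ``setting $c$ equal to $x$ or $y$'' suggests applying the congruence $Y_{i,-i}(ca\bar c,b)\equiv Y_{k,-k}(\ldots,-\lambda^{\ldots}\bar c bc)$ with $c=x$ or $c=y$, but before one can do this one must first expand $a=(x+y)a\overline{(x+y)}$ and split, by the additivity of Theorem~\ref{symb}, into the \emph{four} terms
$Y_{i,-i}(xa\bar x,b)$, $Y_{i,-i}(ya\bar x,b)$, $Y_{i,-i}(xa\bar y,b)$, $Y_{i,-i}(ya\bar y,b)$. Only the two diagonal terms have the shape $ca\bar c$ needed for the rolling congruence; the two cross terms $Y_{i,-i}(ya\bar x,b)$ and $Y_{i,-i}(xa\bar y,b)$ do \emph{not} have this form and ``setting $c=x$ or $c=y$'' does nothing for them. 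Their triviality has a different source: the first argument already lies in $I\circ J$ (since $y\in J$, $a,\bar x\in I$, so $ya\bar x\in JI$, and symmetrically for the other), and one invokes normality directly, or the final clause of Theorem~\ref{symb}. Without spelling out this four-term expansion and the separate treatment of the cross terms, the long-root half of the reverse inclusion is not actually proved; your write-up only gestures at it. The paper's proof makes this explicit via the identity $a=(a'+b')a\overline{(a'+b')}$ and the resulting four-factor congruence.

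A minor remark: your forward inclusion $\EU(2n,(\FidealI)\circ(\FidealJ))\subseteq[\EU(2n,\FidealI),\EU(2n,\FidealJ)]$ is already the content of Lemma~5 in the paper (and does not use comaximality), so the paper omits it. Re-deriving it is harmless but redundant.
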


Another bizarre corollary of Theorem~\ref{generators} is surjective 
stability of the quotients 
$$ [\FU(2n,\FidealI),\FU(2n,\FidealJ)]/
\EU(2n,(\FidealI)\circ(\FidealJ)), $$
\noindent 
again for {\it arbitrary\/} associative form rings, without any 
stability conditions, or commutativity conditions.
This is a typical result in the style of Bak's paradigm 
``stability results without stability conditions'', see \cite{Bak}
and also \cite{RH, RH2, RN1, RN, BHV}.

\begin{The}\label{T:9}
Let $(\FormR)$ be any associative form ring, let $(\FidealI)$ 
and $(\FidealJ)$ be two form ideals of the form ring 
$(\FormR)$ and let $n\ge 3$. Then the stability map 
\begin{multline*}
[\FU(2n,\FidealI), \FU(2n, \FidealJ)]/
\EU(2n,(\FidealI)\circ(\FidealJ))\map\\
[\FU(2(n+1),\FidealI), \FU(2(n+1), \FidealJ)]/
\EU(2(n+1),(\FidealI)\circ(\FidealJ)) 
\end{multline*}
\noindent
is surjective.
\end{The}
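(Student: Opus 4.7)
The approach is to combine the drastic reduction of the generating set in Theorem~\ref{generators} with the unrelativisation in Theorem~\ref{equality2}, and to observe that after this reduction every $Z$-type generator already sits inside the denominator $\EU(2(n+1),(\FidealI)\circ(\FidealJ))$, so that the quotient is generated by a handful of elementary commutators that visibly come from rank $n$.

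First I would unpack the numerator. By Theorem~\ref{equality2} one has $[\FU(2(n+1),\FidealI), \FU(2(n+1),\FidealJ)] = [\EU(2(n+1),\FidealI), \EU(2(n+1),\FidealJ)]$, and by Theorem~\ref{generators} this is generated by the elementary conjugates $Z_{ij}(ab,c)$ and $Z_{ij}(ba,c)$ for arbitrary indices $i,j\in\{\pm 1,\ldots,\pm(n+1)\}$, together with a single short-root family and a single long-root family of elementary commutators $Y_{hk}(a,b)$. By Theorem~\ref{The:Op-2.1}, moreover, modulo $\EU(2(n+1),(\FidealI)\circ(\FidealJ))$ the short-root pair $(h,k)$ and the long-root pair $(h,-h)$ used for these $Y$'s can be chosen freely.

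The key observation is that every $Z$-type generator already belongs to $\EU(2(n+1),(\FidealI)\circ(\FidealJ))$. For a short-root position $i\neq\pm j$ this is immediate: the coefficient $ab$ lies in $IJ\subseteq IJ+JI$, so $T_{ij}(ab)$ is a standard generator of $\FU\bigl(2(n+1), IJ+JI, \Gamma_{\min}(IJ+JI)\bigr)\subseteq\EU(2(n+1),(\FidealI)\circ(\FidealJ))$, and normality of the latter subgroup in $\EU(2(n+1),\FormR)$ absorbs the conjugation by $T_{ji}(c)$; the same goes for $Z_{ij}(ba,c)$ with $ba\in JI$. For the long-root case $i=-j$ the same conclusion holds once one traces the admissible values of $ab$ and $ba$ into the long-root component ${}^J\Gamma+{}^I\Delta+\Gamma_{\min}(IJ+JI)$ of the symmetrised form ideal recalled in \S 2.

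It follows that modulo $\EU(2(n+1),(\FidealI)\circ(\FidealJ))$ the commutator subgroup $[\FU(2(n+1),\FidealI), \FU(2(n+1),\FidealJ)]$ is generated by $Y_{hk}(a,b)$ for the preferred short-root and long-root pairs alone. Since $n\ge 3$ leaves at least three hyperbolic pairs available, both $(h,k)$ and $(h,-h)$ can be chosen entirely inside $\{\pm 1,\ldots,\pm n\}$, and each such $Y_{hk}(a,b)$ is manifestly the image under the stable embedding $\FU(2n,\cdot)\hookrightarrow\FU(2(n+1),\cdot)$ of the corresponding $Y_{hk}(a,b)$ in $[\FU(2n,\FidealI), \FU(2n, \FidealJ)]$; surjectivity follows. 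The main obstacle I foresee is the long-root half of the absorption claim in the third paragraph: verifying honestly that the formal products land in the long-root form parameter of $(\FidealI)\circ(\FidealJ)$ requires an unpleasant but routine unfolding of the definitions of $\lambda^{-(\e(i)+1)/2}\Gamma$, ${}^J\Gamma$ and $\Gamma_{\min}(IJ+JI)$, together with a small amount of Steinberg-type bookkeeping to recognise each factor as a legitimate generator of $\EU(2(n+1),(\FidealI)\circ(\FidealJ))$.
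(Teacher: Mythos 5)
Your proposal is correct and follows essentially the same route as the paper's own one-line justification: reduce via Theorem~\ref{generators} to the $Z$- and $Y$-generators, absorb the $Z$'s into the denominator $\EU(2(n+1),(\FidealI)\circ(\FidealJ))$, use Theorem~\ref{equality} (or, as you do, the mobility of $Y$-commutators from Theorem~\ref{The:Op-2.1}) to see conjugation is immaterial modulo the denominator, and observe that the remaining $Y$-commutators can be supported on indices within the rank-$n$ range. Your third paragraph merely unfolds the detail behind the paper's terse remark that $[\EU(2n,\FidealI),\EU(2n,\FidealJ)]$ is, as a normal subgroup, generated by $[\EU(6,\FidealI),\EU(6,\FidealJ)]$.
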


Indeed, in view of Theorems~\ref{generators} and \ref{equality} as a normal subgroup of 
$\EU(2n,\FormR)$ the group $[\EU(2n,\FidealI),\EU(2n,\FidealJ)]$ 
is generated by $[\EU(6,\FidealI),\EU(6,\FidealJ)]$. An explicit
calculation of these quotients presents itself as a natural next
step. However, so far we were unable to resolve it, apart from some special cases, see a discussion in \S~12.
\par

\subsection{Organisation of the paper.}
The rest of the paper is devoted to the proof of these results.
In \S\S~1--4 we recall the
necessary definitions and collect requisite preliminary results.
The next four sections \S\S~5--8 are the technical core of 
the paper. Namely, in \S~5 we prove Theorem~\ref{equality} and derive
first corollaries thereof.  In \S~6 we reduce the set of generators 
of $[\EU(2n,\FidealI),\EU(2n,\FidealJ)]$
to the first two types. In \S~7 we prove Theorem~\ref{The:Op-2.1} and then in
\S~8 establish another cognate result, relating {\it some\/} 
elementary commutators of short root type with {\it some\/} 
elementary commutators of long root type.
This finishes the proof of Theorem~\ref{generators} and its corollaries, and, 
in particular, also of Theorems \ref{equality2}--\ref{T:4}
In \S~9  we establish the special cases of Theorem~\ref{T:7}
pertaining to triple and quadruple commutators, and then in
\S~10 derive
Theorem~\ref{T:7} itself by an easy induction. In \S~11 we 
derive Theorem~\ref{T:8} and yet another corollary of our main 
results. Finally, in \S~12 we describe the general context, 
briefly review recent related publications and 
state several further related open problems.


\section{Notation}

Here we recall some basic notation that will be used throughout
the present paper.

\subsection{General linear group}\Label{general}
Let, as above, $A$ be an associative ring with 1. For natural $m,n$
we denote by $M(m,n,A)$ the additive group of $m\times n$ matrices
with entries in $A$. In particular $M(m,A)=M(m,m,A)$ is the ring of
matrices of degree $m$ over $A$. For a matrix $x\in M(m,n,A)$ we
denote by $x_{ij}$, $1\le i\le m$, $1\le j\le n$, its entry in the
position $(i,j)$. Let $e$ be the identity matrix and $e_{ij}$,
$1\le i,j\le m$, be a standard matrix unit, i.e.\ the matrix which has
1 in the position $(i,j)$ and zeros elsewhere.
\par
As usual, $\GL(m,A)=M(m,A)^*$ denotes the general linear group
of degree $m$ over $A$. The group $\GL(m,A)$ acts on the free right
$A$-module $V\cong A^{m}$ of rank $m$. Fix a base $e_1,\ldots,e_{m}$
of the module $V$. We may think of elements $v\in V$ as columns with
components in $A$. In particular, $e_i$ is the column whose $i$-th
coordinate is 1, while all other coordinates are zeros.
\par
Actually, in the present paper we are only interested in the case,
when $m=2n$ is even. We usually number the base
as follows: $e_1,\ldots,e_n,e_{-n},\ldots,e_{-1}$. All other
occurring geometric objects will be numbered accordingly. Thus,
we write $v=(v_1,\ldots,v_n,v_{-n},\ldots,v_{-1})^t$, where $v_i\in A$,
for vectors in $V\cong A^{2n}$.
\par
The set of indices will be always ordered in conformity with this convention,
$\Omega=\{1,\ldots,n,-n,\ldots,-1\}$. Clearly, $\Omega=\Omega^+\sqcup\Omega^-$,
where $\Omega^+=\{1,\ldots,n\}$ and $\Omega^-=\{-n,\ldots,-1\}$. For an
element $i\in\Omega$ we denote by $\e(i)$ the sign of $\Omega$, i.e.\
$\e(i)=+1$ if $i\in\Omega^+$, and $\e(i)=-1$ if $i\in\Omega^-$.


\subsection{Commutators}\Label{sub:1.4}
Let $G$ be a group. For any $x,y\in G$, ${}^xy=xyx^{-1}$ and $y^x=x^{-1}yx$
denote the left conjugate and the right conjugate of $y$ by $x$,
respectively. As usual, $[x,y]=xyx^{-1}y^{-1}$ denotes the
left-normed commutator of $x$ and $y$. Throughout the present paper
we repeatedly use the following commutator identities:
\begin{itemize}
\item[(C1)] $[x,yz]=[x,y]\cdot {}^y[x,z]$,
\smallskip
\item[(C$1^+$)]
An easy induction, using identity (C1), shows that
$$\Bigg[x,\prod_{i=1}^k u_i\Bigg]=
\prod_{i=1}^k {}^{\prod_{j=1}^{i-1}u_j}[x,u_{i}], $$
\noindent
where by convention $\prod_{j=1}^0 u_j=1$,
\item[(C2)] $[xy,z]={}^x[y,z]\cdot [x,z]$,
\smallskip
\item[(C$2^+$)]
As in (C$1^+$), we have
$$\Bigg[\prod_{i=1}^k u_i,x\Bigg]=
\prod_{i=1}^k {}^{\prod_{j=1}^{k-i}u_j}[u_{k-i+1},x], $$
\smallskip
\item[(C3)]
${}^{x}\big[[x^{-1},y],z\big]\cdot {}^{z}\big[[z^{-1},x],y\big]\cdot
{}^{y}\big[[y^{-1},z],x\big]=1$,
\smallskip
\item[(C4)] $[x,{}^yz]={}^y[{}^{y^{-1}}x,z]$,
\smallskip
\item[(C5)] $[{}^yx,z]={}^{y}[x,{}^{y^{-1}}z]$,
\smallskip
\item[(C6)] If $H$ and $K$ are subgroups of $G$, then $[H,K]=[K,H]$,
\end{itemize}
Especially important is (C3), the celebrated {\it Hall--Witt
identity\/}. Sometimes it is used in the following form, known as
the {\it three subgroup lemma\/}.
\begin{Lem}{\label{HW1}}
Let\/ $F,H,L\trianglelefteq G$ be three normal subgroups
of\/ $G$. Then
$$ \big[[F,H],L\big]\le \big [[F,L],H\big ]\cdot \big [F,[H,L]\big ]. $$
\end{Lem}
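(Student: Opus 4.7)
The plan is to prove the three subgroup lemma directly from the Hall--Witt identity (C3) by choosing the arguments so that one of the three cyclic factors produces the target commutator $\big[[F,H],L\big]$, while the other two automatically land in the factors on the right.

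First I would set $N=\big[[F,L],H\big]\cdot\big[F,[H,L]\big]$ and observe that $N$ is normal in $G$. This uses nothing more than the fact that if $X,Y\unlhd G$ then $[X,Y]\unlhd G$, iterated twice, plus the fact that a product of two normal subgroups is normal. Similarly, $[F,H]\unlhd G$, so $\big[[F,H],L\big]\unlhd G$ as well.

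Next, fix arbitrary generators $a\in F$, $b\in H$, $c\in L$. Substitute $x=a^{-1}$, $y=b$, $z=c$ into the Hall--Witt identity (C3) to obtain
$$ {}^{a^{-1}}\big[[a,b],c\big]\cdot {}^{c}\big[[c^{-1},a^{-1}],b\big]\cdot {}^{b}\big[[b^{-1},c],a^{-1}\big]=1. $$
Using $[L,F]=[F,L]$ and $[H,L]$, the second factor lies in a $G$-conjugate of $\big[[F,L],H\big]$ and the third in a $G$-conjugate of $\big[F,[H,L]\big]$; since $N$ is normal in $G$, both lie in $N$. Hence ${}^{a^{-1}}\big[[a,b],c\big]\in N$, and normality of $N$ again gives $\big[[a,b],c\big]\in N$.

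Finally, I would verify that $\big[[F,H],L\big]$ is contained in the normal closure of the set $\big\{[[a,b],c]:a\in F,b\in H,c\in L\big\}$. Indeed, $[F,H]$ is generated as a group by the elements $[a,b]^{\pm1}$, so any $u\in[F,H]$ is a product of such commutators; applying the expansion formula (C2$^+$) then shows that $[u,c]$ is a product of $G$-conjugates of commutators $\big[[a,b]^{\pm1},c\big]$, and $\big[[a,b]^{-1},c\big]$ in turn is a conjugate of the inverse of $\big[[a,b],c\big]$. Since each such element lies in $N$ and $N$ is normal in $G$, we conclude $\big[[F,H],L\big]\le N$, as required. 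No step here is really hard; the only subtlety is keeping track that the three groups involved really are all normal in $G$ (so that conjugation can be absorbed freely), which is precisely what the hypotheses guarantee.
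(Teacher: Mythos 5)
Your proof is correct and follows exactly the route the paper intends: the lemma is stated immediately after the Hall--Witt identity (C3) as its standard consequence, and your substitution $x=a^{-1}$, $y=b$, $z=c$, together with the normality of $N=\big[[F,L],H\big]\cdot\big[F,[H,L]\big]$ and the reduction of $\big[[F,H],L\big]$ to generators via (C2$^+$), is precisely the classical derivation. No gaps.
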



\section{Form rings and form ideal}\label{sec2}

The notion of $\Lambda$-quadratic forms, quadratic modules and generalised
unitary groups over a form ring $(A,\Lambda)$ were introduced by Anthony
Bak in his Thesis, see \cite{B1, B2}. In this section, and the next one, we
{\it very briefly\/} review the most fundamental notation and results
that will be constantly used in the sequel. We refer to
\cite{bass73, B2, HO, knus, BV3, tang, RH, RH2, petrov1, 
RNZ1, RNZ4, RNZ5, lavrenov} for details, proofs, and
further references. In the final section we mention some further related
recent works, and some generalisations.


\subsection{Form rings}\label{form algebra}
Let $R$ be a commutative ring with $1$, and $A$ be an (not necessarily
commutative) $R$-algebra. An involution, denoted by $\bar{\phantom{a}}$,
is an
anti-homomorphism of $A$ of order $2$. Namely, for $a,b\in A$,
one has 
$$ \overline{a+b}=\bar a+\bar b,\qquad
\overline{ab}=\bar b\,\bar a,\qquad \bar{\bar a}=a. $$
\par\noindent
Fix an element $\lambda\in\Cent(A)$ such that $\lambda\bar\lambda=1$. One may
define two additive subgroups of $A$ as follows:
$$ \Lambda_{\min}=\{c-\lambda\bar c\mid c\in A\}, \qquad
\Lambda_{\max}=\{c\in A\mid c=-\lambda\bar c\}. $$
\noindent
A {\em form parameter} $\Lambda$ is an additive subgroup 
of $A$ such that
\begin{itemize}
\item[(1)] $\Lambda_{\min}\subseteq\Lambda\subseteq\Lambda_{\max}$,
\smallskip
\item[(2)] $c\,\Lambda\,\bar c\subseteq\Lambda$ for all $c\in A$.
\end{itemize}
The pair $(A,\Lambda)$ is called a {\em form ring}.


\subsection{Form ideals}\label{form ideals}
Let $I\unlhd A$ be a two-sided ideal of $A$. We assume $I$ to be
involution invariant, i.~e.~such that $\bar I=I$. Set
$$ \Gamma_{\max}(I)=I\cap \Lambda, \qquad
\Gamma_{\min}(I)=\{a-\lambda\bar a\mid a\in I\}+
\langle ac\bar a\mid a\in I, c\in\Lambda\rangle. $$
\noindent
A {\em relative form parameter} $\Gamma$ in $(\FormR)$ of level $I$ is an
additive group of $I$ such that
\begin{itemize}
\item[(1)] $\Gamma_{\min}(I)\subseteq \Gamma \subseteq\Gamma_{\max}(I)$,
\smallskip
\item[(2)] $c\,\Gamma\,\bar c\subseteq \Gamma$ for all $c\in A$.
\end{itemize}
The pair $(I,\Gamma)$ is called a {\em form ideal}.
\par
In the level calculations we will use sums and products of form
ideals. Let $(I,\Gamma)$ and $(J,\Delta)$ be two form ideals. Their sum
is artlessly defined as $(I+J,\Gamma+\Delta)$, it is immediate to verify
that this is indeed a form ideal.
\par
Guided by analogy, one is tempted to set
$(I,\Gamma)(J,\Delta)=(IJ,\Gamma\Delta)$. However, it is considerably
harder to correctly define the product of two relative form parameters.
The papers \cite{Ha1,Ha2,RH,RH2} introduce the following definition
$$ \Gamma\Delta=\Gamma_{\min}(IJ)+{}^J\Gamma+{}^I\Delta, $$
\noindent
where
$$ {}^J\Gamma=\big\langle b\,\Gamma\,\bar b\mid b\in J\big\rangle,\qquad
{}^I\Delta=\big\langle a\,\Delta\,\bar a\mid a\in I\big\rangle. $$
\noindent
One can verify that this is indeed a relative form parameter of level $IJ$
if $IJ=JI$.
\par
However, in the present paper we do not wish to impose any such
commutativity assumptions. Thus, we are forced to consider the
symmetrised products
$$ I\circ J=IJ+JI,\qquad
\Gamma\circ\Delta=\Gamma_{\min}(IJ+JI)+{}^J\Gamma+{}^I\Delta\big. $$
\noindent
The notation $\Gamma\circ\Delta$ -- as also $\Gamma\Delta$ is
slightly misleading, since in fact it depends on $I$ and $J$, not
just on $\Gamma$ and $\Delta$. Thus, strictly speaking, one should
speak of the symmetrised products of {\it form ideals\/}
$$ (I,\Gamma)\circ (J,\Delta)=
\big(IJ+JI,\Gamma_{\min}(IJ+JI)+{}^J\Gamma+{}^I\Delta\big). $$
\noindent
Clearly, in the above notation one has
$$ (I,\Gamma)\circ (J,\Delta)=(I,\Gamma)(J,\Delta)+(J,\Delta)(I,\Gamma). $$


\section{Unitary groups}\label{sec3}

In the present section we recall basic notation and facts related to
Bak's generalised unitary groups.


\subsection{Unitary group}\Label{unitary} 
For a form ring $(\FormR)$, one considers the
{\it hyperbolic unitary group\/} $\GU(2n,\FormR)$, see~\cite[\S2]{BV3}.
This group is defined as follows:
\par
One fixes a symmetry $\lambda\in\Cent(A)$, $\lambda\bar\lambda=1$ and
supplies the module $V=A^{2n}$ with the following $\lambda$-hermitian form
$h:V\times V\map A$,
$$ h(u,v)=\bar u_1v_{-1}+\ldots+\bar u_nv_{-n}+
\lambda\bar u_{-n}v_n+\ldots+\lambda\bar u_{-1}v_1. $$
\noindent
and the following $\Lambda$-quadratic form $q:V\map A/\Lambda$,
$$ q(u)=\bar u_1 u_{-1}+\ldots +\bar u_n u_{-n} \mod\Lambda. $$
\noindent
In fact, both forms are engendered by a sesquilinear form $f$,
$$ f(u,v)=\bar u_1 v_{-1}+\ldots +\bar u_n v_{-n}. $$
\noindent
Now, $h=f+\lambda\bar{f}$, where $\bar f(u,v)=\bar{f(v,u)}$, and
$q(v)=f(u,u)\mod\Lambda$.
\par
By definition, the hyperbolic unitary group $\GU(2n,A,\Lambda)$ consists
of all elements from $\GL(V)\cong\GL(2n,A)$ preserving the $\lambda$-hermitian
form $h$ and the $\Lambda$-quadratic form $q$. In other words, $g\in\GL(2n,A)$
belongs to $\GU(2n,A,\Lambda)$ if and only if
$$ h(gu,gv)=h(u,v)\quad\text{and}\quad q(gu)=q(u),\qquad\text{for all}\quad u,v\in V. $$
\par
When the form parameter is neither maximal nor minimal, these groups 
are not algebraic. However, their internal structure is very similar to that
of the usual classical groups. They are also oftentimes called general
quadratic groups, or classical-like groups.

\subsection{Unitary transvections}\Label{elementary1}
{\it Elementary unitary transvections\/} $T_{ij}(\xi)$
correspond to the pairs $i,j\in\Omega$ such that $i\neq j$. They come in
two stocks. Namely, if, moreover, $i\neq-j$, then for any $c\in A$ we set
$$ T_{ij}(c)=e+c e_{ij}-\lambda^{(\e(j)-\e(i))/2}\bar c e_{-j,-i}. $$
\noindent
These elements are also often called {\it elementary short root unipotents\/}.
\noindent
On the other side for $j=-i$ and $c\in\lambda^{-(\e(i)+1)/2}\Lambda$ we set
$$ T_{i,-i}(c)=e+c e_{i,-i}. $$
\noindent
These elements are also often called {\it elementary long root elements\/}.
\par
Note that $\bar\Lambda=\bar\lambda\Lambda$. In fact, for any element
$c\in\Lambda$ one has $\bar c=-\bar\lambda c$ and thus 
$\bar\Lambda$ coincides with the set of products $\bar\lambda c$, where $c\in\Lambda$. This means that in the
above definition $c\in\bar\Lambda$ when $i\in\Omega^+$ and $c\in\Lambda$
when $i\in\Omega^-$.
\par
Subgroups $X_{ij}=\{T_{ij}(c)\mid c\in A\}$, where $i\neq\pm j$, are
called {\it short root subgroups\/}. Clearly, $X_{ij}=X_{-j,-i}$.
Similarly, subgroups $X_{i,-i}=\{T_{ij}(c)\mid
c\in\lambda^{-(\e(i)+1)/2}\Lambda\}$ are called {\it long root subgroups\/}.
\par
The {\it elementary unitary group\/} $\EU(2n,\FormR)$ is generated by
elementary unitary transvections $T_{ij}(c)$, $i\neq\pm j$, $c\in A$,
and $T_{i,-i}(c)$, $c\in\Lambda$, see~\cite[\S3]{BV3}.

\subsection{Steinberg relations}\Label{elementary2}
Elementary unitary transvections $T_{ij}(\xi)$ satisfy the following
{\it elementary relations\/}, also known as {\it Steinberg relations\/}.
These relations will be used throughout this paper.
\par\smallskip
(R1) \ $T_{ij}(c)=T_{-j,-i}(-\lambda^{(\varepsilon(j)-\varepsilon (i))/2}\bar{c})$,
\par\smallskip
(R2) \ $T_{ij}(c)T_{ij}(d)=T_{ij}(c+d)$,
\par\smallskip
(R3) \ $[T_{ij}(c),T_{hk}(d)]=e$, where $h\ne j,-i$ and $k\ne i,-j$,
\par\smallskip
(R4) \ $[T_{ij}(c),T_{jh}(d)]=
T_{ih}(cd)$, where $i,h\ne\pm j$ and $i\ne\pm h$,
\par\smallskip
(R5) \ $[T_{ij}(c),T_{j,-i}(d)]=
T_{i,-i}(cd-\lambda^{-\varepsilon(i)}\bar{d}\bar{c})$, 
where $i\ne\pm j$,
\par\smallskip
(R6) \ $[T_{i,-i}(a),T_{-i,j}(d)]=
T_{ij}(ac)T_{-j,j}(-\lambda^{(\ep(j)-\ep(i))/2}\bar c a c)$,
where $i\ne\pm j$.
\par\smallskip
Relation (R1) coordinates two natural parametrisations of the same short
root subgroup $X_{ij}=X_{-j,-i}$. Relation (R2) expresses additivity of
the natural parametrisations. All other relations are various instances
of the Chevalley commutator formula. Namely, (R3) corresponds to the
case, where the sum of two roots is not a root, whereas (R4), and (R5)
correspond to the case of two short roots, whose sum is a short root,
and a long root, respectively. Finally, (R6) is the Chevalley commutator
formula for the case of a long root and a short root, whose sum is a root.
Observe that any two long roots are either opposite, or orthogonal, so
that their sum is never a root.


\section{Relative subgroups}\label{sec4}

In this section we recall definitions and basic facts concerning relative
subgroups. For the proofs of these results, see

\subsection{Relative subgroups}\Label{relative} One associates with a form ideal $(I,\Gamma)$
the following four relative subgroups.
\par\smallskip
$\bullet$ The subgroup $\FU(2n,I,\Gamma)$ generated by elementary unitary
transvections of level $(I,\Gamma)$,
$$ \FU(2n,I,\Ga)=\big\langle T_{ij}(a)\mid \
a\in I\text{ if }i\neq\pm j\text{ and }
a\in\lambda^{-(\epsilon(i)+1)/2}\Gamma\text{ if }i=-j\big\rangle. $$
\par\smallskip
$\bullet$ The {\it relative elementary subgroup\/} $\EU(2n,I,\Gamma)$
of level $(I,\Gamma)$, defined as the normal closure of $\FU(2n,I,\Gamma)$
in $\EU(2n,\FormR)$,
$$ \EU(2n,I,\Ga)={\FU(2n,I,\Ga)}^{\EU(2n,\FormR)}. $$
\par\smallskip
$\bullet$ The {\it principal congruence subgroup\/} $\GU(2n,I,\Ga)$ of level
$(I,\Ga)$ in $\GU(2n,A,\Lambda)$ consists of those $g\in \GU(2n,A,\Lambda)$,
which are congruent to $e$ modulo $I$ and preserve $f(u,u)$ modulo $\Ga$,
$$ f(gu,gu)\in f(u,u)+\Ga, \qquad u\in V. $$
\par\smallskip
$\bullet$ The full congruence subgroup $\CU(2n,I,\Gamma)$ of level
$(I,\Gamma)$, defined as
$$ \CU(2n,I,\Ga)=\big\{ g\in \GU(2n,A,\Lambda) \mid
[g,\GU(2n,A,\Lambda)]\subseteq \GU(2n,I,\Ga)\big\}. $$
\par\smallskip
In some books, including \cite{HO}, the group $\CU(2n,I,\Ga)$
is defined differently. However, in many important situations
these definitions yield the same group.


\subsection{Some basic lemmas}\Label{relativefacts}
Let us collect several basic facts, concerning relative groups,
which will be used in the sequel. The first one of them, see
\cite{BV3}, Lemma 5.2, asserts that
the relative elementary groups are $\EU(2n,A,\Lambda)$-perfect.

\begin{Lem}\label{hww3}
Suppose either $n\ge 3$ or $n=2$ and $I=\Lambda I+I\Lambda$.
Then
$$ \EU(2n,I,\Gamma)=[\EU(2n,I,\Gamma),\EU(2n,A,\Lambda)]. $$
\end{Lem}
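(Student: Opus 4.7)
My plan is to prove the two inclusions separately. The inclusion $[\EU(2n,I,\Gamma),\EU(2n,A,\Lambda)]\subseteq\EU(2n,I,\Gamma)$ follows immediately from the normality of $\EU(2n,I,\Gamma)$ in $\EU(2n,A,\Lambda)$. For the reverse inclusion, I would note that $[\EU(2n,I,\Gamma),\EU(2n,A,\Lambda)]$ is itself normal in $\EU(2n,A,\Lambda)$ (being the commutator of a normal subgroup with the ambient group), and that $\EU(2n,I,\Gamma)$ is by definition the normal closure of $\FU(2n,I,\Gamma)$; hence it suffices to exhibit each elementary generator $T_{ij}(a)$ of $\FU(2n,I,\Gamma)$ as a member of this commutator subgroup.

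For a short-root generator $T_{ij}(a)$ with $i\neq\pm j$ and $a\in I$, the hypothesis $n\geq 3$ furnishes an index $h$ distinct from $\pm i$ and $\pm j$, and Steinberg relation (R4) then provides the identity $T_{ij}(a)=[T_{ih}(a),T_{hj}(1)]$ with $T_{ih}(a)\in\FU(2n,I,\Gamma)\subseteq\EU(2n,I,\Gamma)$ and $T_{hj}(1)\in\EU(2n,A,\Lambda)$, which completes this case at once.

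The harder case is a long-root generator $T_{i,-i}(a)$ with $a\in\lambda^{-(\e(i)+1)/2}\Gamma$. Fixing some $j\neq\pm i$, I would first apply (R5) with one of $c,d$ taken from $I$ to realise
\[ [T_{ij}(c),T_{j,-i}(d)]=T_{i,-i}(cd-\lambda^{-\e(i)}\bar d\bar c)\in[\EU(2n,I,\Gamma),\EU(2n,A,\Lambda)], \]
covering the ``hyperbolic'' parameters $\{x-\lambda^{-\e(i)}\bar x\mid x\in I\}$. Then (R6), applied to $T_{i,-i}(a')\in\EU(2n,A,\Lambda)$ and $T_{-i,j}(d)\in\FU(2n,I,\Gamma)$, yields a commutator whose right-hand side, after cancelling the short-root factor $T_{ij}(a'd)$ already handled above, places $T_{-j,j}$ with ``transfer'' parameters $\bar d a'd$ into the commutator subgroup. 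Together these hyperbolic and transfer contributions additively generate $\Gamma_{\min}(I)$, and to reach the full form parameter $\Gamma$ I would exploit the additive structure together with the normal action of $\EU(2n,A,\Lambda)$ on $[\EU(2n,I,\Gamma),\EU(2n,A,\Lambda)]$, using the form-parameter closure property $c\Gamma\bar c\subseteq\Gamma$.

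The main obstacle is precisely this long-root case, and within it the step of reaching parameters lying in $\Gamma\setminus\Gamma_{\min}(I)$, which demands a careful orchestration of the Steinberg relations with the conjugation action. The remaining case $n=2$ with $I=\Lambda I+I\Lambda$ follows the same template; the extra hypothesis substitutes for the missing free index $h\neq\pm i,\pm j$ by allowing short-root generators $T_{ij}(\lambda b)$ (or $T_{ij}(b\lambda)$) to be recovered from long-root commutators via (R6) applied to $T_{i,-i}(\lambda)$ or its opposite.
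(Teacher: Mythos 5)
The paper itself gives no proof of this lemma---it is quoted from \cite{BV3}, Lemma 5.2---so your proposal has to stand on its own. Your reduction (both commutator subgroups are normal in $\EU(2n,A,\Lambda)$, so it suffices to capture the elementary generators of $\FU(2n,I,\Gamma)$) is correct, and the short-root case via (R4) is complete.

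There is, however, a genuine gap in your long-root case. The elements you produce---hyperbolic parameters $x-\lambda^{\pm1}\bar x$ with $x\in I$ from (R5), and transfer parameters $\bar d\,a'\,d$ with $d\in I$, $a'\in\Lambda$ from (R6) applied with the long-root argument in the \emph{absolute} group---additively generate exactly $\Gamma_{\min}(I)$ and nothing more. Your proposed escape route (additivity, the closure property $c\,\Gamma\,\bar c\subseteq\Gamma$, and conjugation by $\EU(2n,A,\Lambda)$) cannot bridge the gap to a general $a\in\Gamma$, because $\Gamma_{\min}(I)$ is itself a relative form parameter: it is an additive subgroup closed under $c\,(\cdot)\,\bar c$, and $\EU(2n,I,\Gamma_{\min}(I))$ is already normal in $\EU(2n,A,\Lambda)$, so none of these operations ever produce a long-root parameter outside $\Gamma_{\min}(I)$. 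Whenever $\Gamma\neq\Gamma_{\min}(I)$ your argument therefore misses the generators $T_{i,-i}(a)$ with $a\in\Gamma\setminus\Gamma_{\min}(I)$. The fix is to distribute the arguments of (R6) the other way round: for an admissible $a\in\lambda^{-(\e(i)+1)/2}\Gamma$ one has
$$ [T_{i,-i}(a),T_{-i,j}(1)]=T_{ij}(a)\,T_{-j,j}\bigl(-\lambda^{(\e(j)-\e(i))/2}a\bigr), $$
where now $T_{i,-i}(a)\in\FU(2n,I,\Gamma)$ and $T_{-i,j}(1)\in\EU(2n,A,\Lambda)$. The short-root factor $T_{ij}(a)$ has parameter $a\in\Gamma\subseteq I$ and lies in the commutator subgroup by your first case, hence so does the long-root factor; and as the position $(i,-i)$ and the admissible parameter $a$ vary, these long-root factors exhaust all long-root generators of $\FU(2n,I,\Gamma)$. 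This handles all of $\Gamma$ at one stroke and renders your (R5)/(R6) computations for $\Gamma_{\min}(I)$ unnecessary.
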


The next lemma gives generators of the relative elementary subgroup
$\EU(2n,I,\Ga)$ as a subgroup. With this end, consider matrices
$$ Z_{ij}(a,c)={}^{T_{ji}(c)}T_{ij}(a)
=T_{ji}(c)T_{ij}(a)T_{ji}(-c), $$
\noindent
where $a\in I$, $c\in A$, if $i\neq\pm j$, and
$a\in\lambda^{-(\e(i)+1)/2}\Gamma$,
$c\in\lambda^{-(\e(j)+1)/2}\Lambda$, if $i=-j$.
The following result is \cite{BV3}, Proposition 5.1.
\begin{Lem}\label{genelm}
Suppose $n\ge 3$. Then
\begin{multline*}
\EU(2n,I,\Ga)=\big\langle Z_{ij}(a,c)\mid \
a\in I, c\in A\text{ if }i\neq\pm j\text{ and }\\
a\in\lambda^{-(\epsilon(i)+1)/2}\Gamma,
c\in\lambda^{-(\epsilon(j)+1)/2}\Lambda,
\text{ if }i=-j\big\rangle.
\end{multline*}
\end{Lem}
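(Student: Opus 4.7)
The inclusion $H\subseteq\EU(2n,I,\Gamma)$, where $H$ denotes the subgroup on the right-hand side, is immediate: each $Z_{ij}(a,c)={}^{T_{ji}(c)}T_{ij}(a)$ is the conjugate of a generator of $\FU(2n,I,\Gamma)$ by an element of $\EU(2n,A,\Lambda)$, and $\EU(2n,I,\Gamma)$ is normal in $\EU(2n,A,\Lambda)$ by construction. Setting $c=0$ shows further that $T_{ij}(a)=Z_{ij}(a,0)\in H$, so $\FU(2n,I,\Gamma)\subseteq H$. For the non-trivial direction, since $\EU(2n,I,\Gamma)$ is by definition the normal closure of $\FU(2n,I,\Gamma)$ inside $\EU(2n,A,\Lambda)$, it suffices to show that $H$ is stable under conjugation by every elementary generator $T_{hk}(d)$ of $\EU(2n,A,\Lambda)$.

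The key task is therefore ${}^{T_{hk}(d)}Z_{ij}(a,c)\in H$. I expand
\[
{}^{T_{hk}(d)}Z_{ij}(a,c)=\bigl({}^{T_{hk}(d)}T_{ji}(c)\bigr)\cdot\bigl({}^{T_{hk}(d)}T_{ij}(a)\bigr)\cdot\bigl({}^{T_{hk}(d)}T_{ji}(c)\bigr)^{-1},
\]
and apply the identity ${}^{x}y=[x,y]\cdot y$ to each factor. By the Steinberg relations (R3)--(R6), the central factor ${}^{T_{hk}(d)}T_{ij}(a)$ lies in $\FU(2n,I,\Gamma)$: the commutator $[T_{hk}(d),T_{ij}(a)]$ is a product of at most two elementary transvections whose arguments sit at level $(I,\Gamma)$, because $a$ does. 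The outer factor $u:={}^{T_{hk}(d)}T_{ji}(c)=v\cdot T_{ji}(c)$, with $v:=[T_{hk}(d),T_{ji}(c)]$, lies only in $\EU(2n,A,\Lambda)$. Conjugating the central factor first by $T_{ji}(c)$ and then by $v$, each individual conjugation of a generator $T_{rs}(b)\in\FU(2n,I,\Gamma)$ by a single elementary $T_{pq}(e)\in\EU(2n,A,\Lambda)$ produces either a bona fide $Z$-generator $Z_{rs}(b,e)\in H$ (when the indices $(p,q)$ match the $Z$-pattern), or, via $[T_{pq}(e),T_{rs}(b)]\cdot T_{rs}(b)$, an element of $\FU(2n,I,\Gamma)\subseteq H$ (since the extracted Steinberg commutator again has arguments at level $(I,\Gamma)$). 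Iterating this case analysis a bounded number of times through the at-most-two factors of $v$ --- the recursion terminating once the Chevalley commutator becomes trivial --- shows ${}^{T_{hk}(d)}Z_{ij}(a,c)\in H$.

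The hypothesis $n\ge 3$ enters at every invocation of a Chevalley commutator formula (R4)--(R6), which requires an auxiliary index distinct from $\pm i,\pm j,\pm h,\pm k$; this is always available when $\Omega$ has enough elements. The main obstacle is the careful bookkeeping in the long-root subcases $i=-j$ or $h=-k$, where (R5) and (R6) mix short-root and long-root contributions and the form parameters $\Lambda,\Gamma$ enter the picture nontrivially. One must verify, in each subcase, that every output transvection has argument of the correct type (in $I$ for short root position, in the appropriate twist of $\Gamma$ for long root position) so that it genuinely lies in $\FU(2n,I,\Gamma)\subseteq H$. Once this case analysis is complete, $H$ is normal in $\EU(2n,A,\Lambda)$, and hence coincides with $\EU(2n,I,\Gamma)$.
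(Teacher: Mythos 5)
The paper gives no proof of this lemma: it simply cites \cite{BV3}, Proposition~5.1, so there is no in-paper argument for your proposal to be compared with. Judging your proposal on its own merits, the outer structure is right --- the easy inclusion, $T_{ij}(a)=Z_{ij}(a,0)$, and the observation that it then suffices to prove that $H$ is normalised by every $T_{hk}(d)\in\EU(2n,A,\Lambda)$ --- this is the standard Vaserstein-style route.

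The computational core, however, has a genuine gap. You write ${}^{T_{hk}(d)}Z_{ij}(a,c)={}^{u}w$ with $w={}^{T_{hk}(d)}T_{ij}(a)$ and $u={}^{T_{hk}(d)}T_{ji}(c)=vT_{ji}(c)$, and you assert that $w\in\FU(2n,I,\Gamma)$ and that $v=[T_{hk}(d),T_{ji}(c)]$ is a product of at most two elementary transvections via (R3)--(R6). Both assertions are false in exactly the hard cases: when $T_{hk}$ lies in the root subgroup opposite to $T_{ij}$ (i.e.\ $(h,k)=(j,i)$ up to the $(R1)$ identification), the commutator $[T_{hk}(d),T_{ij}(a)]$ is \emph{not} a Steinberg commutator and is not expressible as a bounded product of transvections; likewise when $T_{hk}$ is opposite to $T_{ji}$, the factor $v$ fails to decompose. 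These are precisely the positions that require an actual argument; the generic position where all commutators are Chevalley commutators is routine. Moreover, even in the generic case, your ``iterate the case analysis'' step produces $Z$-generators that must then be conjugated by the remaining factors of $v$, which is the same problem one level down, so the recursion as described does not visibly terminate.

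What is missing is the standard device for these opposite-root cases (and it is exactly where $n\ge3$ is used): pick an auxiliary index $m\neq\pm i,\pm j$ and write, say, $T_{ij}(d)$ as a commutator $[T_{im}(d),T_{mj}(1)]$ (with the appropriate $\lambda$-twist for unitary groups), so that conjugation by $T_{ij}(d)$ becomes conjugation by a four-letter word all of whose letters lie at positions \emph{not} opposite to $(i,j)$ or $(j,i)$. One then checks, by a case analysis on how $(h,k)$ intersects $\{\pm i,\pm j\}$, that conjugation by such letters takes $Z_{ij}(a,c)$ into a product of an $\FU(2n,I,\Gamma)$-element and a $Z$-generator at the same position $(i,j)$ (or with the $c$-parameter shifted), which closes the induction. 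You mention the availability of auxiliary indices, but never apply them to dissolve the opposite-root cases, and without that the proof does not go through.
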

The following lemma was first established in~\cite{B1}, but remained
unpublished. See~\cite{HO} and~\cite{BV3}, Lemma 4.4, for published
proofs.
\begin{Lem}
The groups $\GU(2n,I,\Gamma)$ and $\CU(2n,I,\Gamma)$ are normal in
$\GU(2n,A,\Lambda)$.
\end{Lem}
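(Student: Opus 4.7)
The plan is to prove the two normality claims separately, with the second following painlessly from the first. For $\GU(2n,\FidealI)$, take $g\in\GU(2n,\FidealI)$ and $y\in\GU(2n,\FormR)$, and verify both defining conditions for $ygy^{-1}$. That $ygy^{-1}\equiv e\pmod{I}$ is immediate because $I$ is a two-sided ideal. For the quadratic form condition, I would write $g=e+x$ with $x\in M(2n,I)$, set $v=y^{-1}u$, $s=xv$, $z=ys$, and expand
\[
f(ygy^{-1}u,ygy^{-1}u)-f(u,u)=f(u,z)+f(z,u)+f(z,z).
\]
The plan is to split this as $\big[f(v,s)+f(s,v)+f(s,s)\big]+\text{(hermitian correction)}+\text{(quadratic correction)}$, where the first bracket lies in $\Gamma$ by the hypothesis on $g$. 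The hermitian correction $\big[f(yv,ys)-f(v,s)\big]+\big[f(ys,yv)-f(s,v)\big]$ collapses, via the exact identity $h(yv,ys)=h(v,s)$, to $\alpha-\lambda\bar\alpha$ with $\alpha=f(ys,yv)-f(s,v)\in I$, hence lies in $\Gamma_{\min}(I)\subseteq\Gamma$ by the defining formula for $\Gamma_{\min}$.

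The main obstacle is the quadratic correction $f(ys,ys)-f(s,s)$; a priori the preservation of $q$ by $y$ only places this in $\Lambda\cap I=\Gamma_{\max}(I)$, which may be strictly larger than $\Gamma$. To sharpen this to containment in $\Gamma_{\min}(I)$, I would encode the forms by their Gram matrices: if $F$ is the matrix of $f$ and $H=F+\lambda F^{*}$ that of $h$, then $f(ys,ys)-f(s,s)=s^{*}Ns$ with $N=y^{*}Fy-F$. The identity $y^{*}Hy=H$ forces $N+\lambda N^{*}=0$, equivalently $N_{ba}=-\lambda\bar N_{ab}$; and the quadratic invariance $u^{*}Nu\in\Lambda$ for all $u$ gives $N_{aa}\in\Lambda$ by setting $u=e_{a}$. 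Expanding $s^{*}Ns$ by entries, each diagonal term $\bar s_{a}N_{aa}s_{a}$ has the shape $bc\bar b$ with $b=\bar s_{a}\in I$ and $c=N_{aa}\in\Lambda$, hence belongs to $\Gamma_{\min}(I)$; each off-diagonal pair $\bar s_{a}N_{ab}s_{b}+\bar s_{b}N_{ba}s_{a}$ collapses via the first relation to $r-\lambda\bar r$ with $r=\bar s_{a}N_{ab}s_{b}\in I$, again in $\Gamma_{\min}(I)$. Summation yields $f(ys,ys)-f(s,s)\in\Gamma_{\min}(I)\subseteq\Gamma$, finishing the proof of normality of $\GU(2n,\FidealI)$.

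For $\CU(2n,\FidealI)$ the argument is purely formal and reduces to the first part. Given $g\in\CU(2n,\FidealI)$ and $y\in\GU(2n,\FormR)$, I would use the identity $[ygy^{-1},h']=y\,[g,\,y^{-1}h'y]\,y^{-1}$ valid for every $h'\in\GU(2n,\FormR)$. The inner commutator lies in $\GU(2n,\FidealI)$ because $y^{-1}h'y\in\GU(2n,\FormR)$ and $g\in\CU(2n,\FidealI)$; and conjugating by $y$ keeps us inside $\GU(2n,\FidealI)$ by the normality just established. Hence $[ygy^{-1},h']\in\GU(2n,\FidealI)$ for every $h'\in\GU(2n,\FormR)$, which by definition means $ygy^{-1}\in\CU(2n,\FidealI)$.
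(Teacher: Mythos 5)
Your proof is correct. The paper itself gives no argument for this lemma --- it merely cites Bak's thesis, \cite{HO}, and \cite{BV3}, Lemma 4.4 --- and your direct verification (reducing the quadratic defect of $ygy^{-1}$ to that of $g$ plus hermitian and quadratic correction terms, and then forcing both corrections into $\Gamma_{\min}(I)$ via the Gram-matrix relations $N+\lambda N^{*}=0$ and $N_{aa}\in\Lambda$) is essentially the standard computation found in those references; the formal reduction of the $\CU$ case to the $\GU$ case via $[ygy^{-1},h']=y[g,y^{-1}h'y]y^{-1}$ is likewise the usual one.
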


In this form the following lemma was established in \cite{RNZ5},
Lemmas 7 and 8, see also \cite{RNZ4}, Lemma~1B for a
definitive exposition. Before that \cite{RNZ1}, Lemmas 21--23
only established weaker inclusions, with smaller left hand sides, 
or larger right hand sides.

\begin{Lem}
$(\FormR)$ be an associative form ring with $1$, $n\ge 3$, and 
let $(\FidealI)$ and $(\FidealJ)$ be two form ideals of $(\FormR)$. 
Then 
\begin{align*} \EU(2n,(\FidealI)\circ(\FidealJ))\le&\big[\FU(2n,\FidealI),\FU(2n,\FidealJ)\big]\le\\
&\big[\EU(2n,\FidealI),\EU(2n,\FidealJ)\big]\le\\
&\big[\GU(2n,\FidealI),\GU(2n,\FidealJ)\big] \le\GU(2n,(\FidealI)\circ(\FidealJ)).
\end{align*}
\end{Lem}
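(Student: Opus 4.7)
My plan is to prove each of the four inclusions in turn, treating the two central ones as essentially trivial and focusing the work on the outer two.

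The two middle inclusions
$[\FU(2n,\FidealI),\FU(2n,\FidealJ)]\le[\EU(2n,\FidealI),\EU(2n,\FidealJ)]\le[\GU(2n,\FidealI),\GU(2n,\FidealJ)]$
are immediate from the containments $\FU(2n,I,\Ga)\le\EU(2n,I,\Ga)\le\GU(2n,I,\Ga)$, and similarly for $(J,\Delta)$, combined with the monotonicity of the mixed commutator subgroup.

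For the rightmost inclusion $[\GU(2n,\FidealI),\GU(2n,\FidealJ)]\le\GU(2n,(\FidealI)\circ(\FidealJ))$, I would proceed by direct matrix-theoretic computation. Writing $g=e+\alpha\in\GU(2n,\FidealI)$ and $h=e+\beta\in\GU(2n,\FidealJ)$, with entries of $\alpha$ lying in $I$ and those of $\beta$ in $J$, the identity $gh-hg=\alpha\beta-\beta\alpha$ together with $g^{-1}\equiv e\pamod I$ and $h^{-1}\equiv e\pamod J$ shows that the entries of $[g,h]-e$ lie in $IJ+JI$. The form-preservation condition modulo $\Gamma\circ\Delta$ then follows by a bilinear computation, using $f(gu,gu)\equiv f(u,u)\pamod\Gamma$ and $f(hu,hu)\equiv f(u,u)\pamod\Delta$ together with the definition of the symmetrised product. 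The three constituents $\Gamma_{\min}(IJ+JI)$, ${}^J\Gamma$ and ${}^I\Delta$ appear naturally from the cross terms in this computation.

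The leftmost inclusion $\EU(2n,(\FidealI)\circ(\FidealJ))\le[\FU(2n,\FidealI),\FU(2n,\FidealJ)]$ is the main technical content, and I would split it into two steps. First, I would show $\FU(2n,(\FidealI)\circ(\FidealJ))\le[\FU(2n,\FidealI),\FU(2n,\FidealJ)]$ by exhibiting every elementary generator as a commutator of generators of the two factor groups via the Chevalley--Steinberg relations. For a short root transvection $T_{ih}(ab)$ with $a\in I$, $b\in J$ (and the mirror case $T_{ih}(ba)$), pick $j\ne\pm i,\pm h$ --- this is where $n\ge3$ enters --- and invoke (R4) to write $T_{ih}(ab)=[T_{ij}(a),T_{jh}(b)]$; both factors manifestly live in the required elementary subgroups. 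For a long root transvection $T_{i,-i}(c)$ with $c$ in the form parameter $\Gamma\circ\Delta$, the three additive summands $\Gamma_{\min}(IJ+JI)$, ${}^J\Gamma$ and ${}^I\Delta$ arise as outputs of (R5) and (R6) applied to suitably chosen pairs of elementary transvections whose levels sit in the appropriate factors.

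Second, I would prove that $[\FU(2n,\FidealI),\FU(2n,\FidealJ)]$ is normal in $\EU(2n,\FormR)$; combined with the first step and the fact that $\EU(2n,(\FidealI)\circ(\FidealJ))$ is by definition the normal closure of $\FU(2n,(\FidealI)\circ(\FidealJ))$ in $\EU(2n,\FormR)$, this closes the inclusion. This normality step is where I expect the main obstacle to lie: one takes a generating commutator $[u,v]$ with $u\in\FU(2n,\FidealI)$, $v\in\FU(2n,\FidealJ)$ and an elementary generator $g=T_{ij}(c)$ of $\EU(2n,\FormR)$, and uses the Hall--Witt identity of Lemma~\ref{HW1} applied to the triple $(g,u,v)$ together with the Chevalley--Steinberg relations to decompose ${}^g[u,v]$ into a product of commutators that visibly lie in $[\FU(2n,\FidealI),\FU(2n,\FidealJ)]$. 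The essential input is that although $[g,u]$ and $[g,v]$ do not themselves lie in $\FU(2n,\FidealI)$ or $\FU(2n,\FidealJ)$, the Steinberg relations express them as products of elementary transvections whose levels still lie in $I$ or $J$, so that the resulting triple commutators can be re-organised into the desired form. The verification is case-by-case through the admissible root configurations for the pair attached to $g$ relative to the supports of $u$ and $v$, and is essentially the content of \cite{RNZ5}, Lemmas~7 and~8, and \cite{RNZ4}, Lemma~1B.
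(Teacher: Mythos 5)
The paper itself does not prove this lemma; it is quoted from \cite{RNZ5}, Lemmas 7 and 8, and \cite{RNZ4}, Lemma 1B, and is used as a preliminary result throughout Sections 5--8. Your handling of the two middle inclusions and your level computation for $[\GU(2n,\FidealI),\GU(2n,\FidealJ)]\le\GU(2n,(\FidealI)\circ(\FidealJ))$ are both sound.

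The problem is the leftmost inclusion. Your plan is to first establish $\FU(2n,(\FidealI)\circ(\FidealJ))\le[\FU(2n,\FidealI),\FU(2n,\FidealJ)]$ and then upgrade it by invoking normality of $[\FU(2n,\FidealI),\FU(2n,\FidealJ)]$ in $\EU(2n,\FormR)$. But that normality statement is precisely Theorem~\ref{equality2} of this paper, and in the paper's logical order Theorem~\ref{equality2} is a corollary of the first claim of Theorem~\ref{generators}, whose proof (Lemmas~\ref{form-1} and~\ref{form-2} in Section~6) already uses the leftmost inclusion of the present lemma --- for instance, at the end of Lemma~\ref{form-1}, the stray factor $\EU(2n,(\FidealI)\circ(\FidealJ))$ is absorbed into $[\FU(2n,\FidealI),\FU(2n,\FidealJ)]$ precisely by this inclusion. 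So deriving the lemma from Theorem~\ref{equality2} is circular in the context of this paper, and you do not supply an independent proof of normality (your Hall--Witt sketch is exactly as hard as, and in fact is, the content of Sections~5--6). The route actually taken in \cite{RNZ5,RNZ4} bypasses normality entirely: one invokes Lemma~\ref{genelm} (\cite{BV3}, Proposition~5.1) to write $\EU(2n,(\FidealI)\circ(\FidealJ))$ as a group generated by the elementary conjugates $Z_{ij}(d,c)$ with $d\in(I\circ J,\Gamma\circ\Delta)$ and $c\in(\FormR)$, splits $d$ by additivity into summands of the shape $ab$, $ba$ (and the corresponding form-parameter contributions), and then checks directly that, e.g.,
\begin{equation*}
Z_{ij}(ab,c)={}^{T_{ji}(c)}T_{ij}(ab)={}^{T_{ji}(c)}\bigl[T_{ih}(a),T_{hj}(b)\bigr]=\bigl[{}^{T_{ji}(c)}T_{ih}(a),\,{}^{T_{ji}(c)}T_{hj}(b)\bigr]
\end{equation*}
with the conjugates ${}^{T_{ji}(c)}T_{ih}(a)=T_{jh}(ca)T_{ih}(a)\in\FU(2n,\FidealI)$ and ${}^{T_{ji}(c)}T_{hj}(b)\in\FU(2n,\FidealJ)$, so that $Z_{ij}(ab,c)\in[\FU(2n,\FidealI),\FU(2n,\FidealJ)]$, and likewise for the long-root generators. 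This is a purely elementary Steinberg-relation computation, independent of Theorems~\ref{generators} and~\ref{equality2}, and is what you should use for the leftmost link of the chain.
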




\section{Elementary commutators modulo $\EU(2n,(\FidealI)\circ(\FidealJ))$}

Now we embark on the proof of the second claim of Theorem~\ref{generators}.
Our first major goal is to prove that the commutator 
$[\FU(2n,\FidealI), \FU(2n,\FidealJ)]$ is central in $\EU(2n,\FormR)$,
modulo $\EU(2n,(\FidealI)\circ(\FidealJ))$. Namely, here we 
establish Theorem~\ref{equality}  and derive some corollaries thereof.
We prove the congruence in Theorem~\ref{equality}  separately for short
root positions, and then for long root positions.

\begin{Lem}\label{Op-1}
Let $(\FormR)$ be an associative form ring with $1$, $n\ge 3$, and let $(\FidealI)$, $(\FidealJ)$ be form ideals of $(\FormR)$.
For any $i\ne\pm j$ any $a\in I$, $b\in J$ and any
$x\in\EU(2n,\FormR)$, one has
$$ {}^x Y_{ij}(a,b)\equiv Y_{ij}(a,b) 
\pamod{\EU(2n,(\FidealI)\circ(\FidealJ))}. $$
\end{Lem}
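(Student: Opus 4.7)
Since both sides of the asserted congruence are multiplicative in $x$, it suffices to treat the case $x=T_{hk}(c)$ where $T_{hk}(c)$ is an elementary generator of $\EU(2n,\FormR)$, with either $h\neq\pm k$ and $c\in A$, or $k=-h$ and $c\in\lambda^{-(\e(h)+1)/2}\Lambda$. What is to be shown is then that the triple commutator
$$ [T_{hk}(c),Y_{ij}(a,b)]=\big[T_{hk}(c),[T_{ji}(a),T_{ij}(b)]\big] $$
lies in $\EU(2n,(\FidealI)\circ(\FidealJ))$.

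The first step is to apply the Hall--Witt identity (C3) to express this triple commutator as a product of conjugates of
$$ \big[[T_{hk}(c),T_{ji}(a)],T_{ij}(b)\big]\quad\text{and}\quad\big[T_{ji}(a),[T_{hk}(c),T_{ij}(b)]\big]. $$
Since $\EU(2n,(\FidealI)\circ(\FidealJ))$ is normal in $\EU(2n,\FormR)$, the outer conjugations produced by (C3) preserve membership in it and may be disregarded. So the problem reduces to proving that each of these two inner triple commutators lies in $\EU(2n,(\FidealI)\circ(\FidealJ))$.

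The second step is to evaluate the innermost bracket $[T_{hk}(c),T_{ji}(a)]$ (and similarly $[T_{hk}(c),T_{ij}(b)]$) using the Steinberg relations (R3)--(R6). In every possible root configuration this commutator is either trivial, by (R3), or expands into a product of one or two elementary transvections whose coefficients are formed from $c$ and $a$. Using conditions (1)--(2) in the definitions of $\Lambda$ and of $\Gamma$, one verifies that each coefficient lies in $I$ at short root positions and in $\Gamma$ at long root positions, so $[T_{hk}(c),T_{ji}(a)]\in\FU(2n,\FidealI)$ and, symmetrically, $[T_{hk}(c),T_{ij}(b)]\in\FU(2n,\FidealJ)$. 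Plugging this back into the outer bracket and expanding again by (R3)--(R6) yields a product of transvections whose coefficients involve one factor from $(\FidealI)$, one factor from $(\FidealJ)$, and at most one auxiliary factor from $A$ or $\Lambda$. A direct inspection shows that every such coefficient lies in $IJ+JI$ at a short root position, and in ${}^J\Gamma+{}^I\Delta+\Gamma_{\min}(IJ+JI)$ at a long root position; this is precisely the content of $(\FidealI)\circ(\FidealJ)$. Hence the inner triple commutator sits in $\FU(2n,(\FidealI)\circ(\FidealJ))\le\EU(2n,(\FidealI)\circ(\FidealJ))$, as required.

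The main obstacle is the bookkeeping in this case analysis: one must run through all relative positions of $\{h,k,-h,-k\}$ with respect to $\{i,j,-i,-j\}$, and, at long root positions, carefully identify the Jordan-type expressions $cab\bar c$, $cab-\lambda^{-\e(i)}\overline{cab}$, and their siblings arising from (R5) and (R6) as genuine members of $\Gamma_{\min}(IJ+JI)$, not merely of $\Lambda\cap(IJ+JI)$. The hypothesis $n\ge 3$ enters at precisely the point where (R4) or (R6) is used to rewrite a long root commutator via intermediate short root transvections, since one needs an auxiliary index distinct from all of $\pm h,\pm k,\pm i,\pm j$.
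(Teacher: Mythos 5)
Your overall strategy (reduce to a single elementary generator, then apply Hall--Witt to peel off the inner commutator, then appeal to Steinberg relations) is plausible in outline, but it has a genuine gap at the Steinberg-relations step. You assert that each inner bracket $[T_{hk}(c),T_{ji}(a)]$ is ``either trivial, by (R3), or expands into a product of one or two elementary transvections.'' This is false precisely in the most dangerous configurations: when $(h,k)\in\{(i,j),(j,i),(-i,-j),(-j,-i)\}$ the two factors sit in \emph{opposite} short root subgroups, and no Steinberg relation (R3)--(R6) applies to such a commutator. There is no closed-form expansion of $[T_{ij}(c),T_{ji}(a)]$ as a short product of transvections for general $c\in A$ and $a\in I$, so the bookkeeping you propose cannot even be started in those cases, and the Hall--Witt decomposition does not put you back on track by itself.

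The paper's proof avoids Hall--Witt entirely: it proceeds by induction on the word length of $x$, computes $[T_{kl}(c),Y_{ij}(a,b)]$ \emph{explicitly} for each generator $T_{kl}(c)$, and --- crucially --- handles the overlapping cases $(k,l)=(\pm i,\pm j),(\pm j,\pm i)$ by first rewriting $T_{kl}(c)=[T_{kh}(c),T_{hl}(1)]$ for an auxiliary index $h\ne\pm i,\pm j$ (possible because $n\ge 3$) and then expanding via (C1)/(C2), which reduces these cases to the ones already computed. A comparable splitting device would have to be inserted into your argument wherever the inner Hall--Witt bracket lands on opposite roots; without it the proof does not go through. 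The non-opposite-root portions of your sketch, and the observation about where $n\ge 3$ is used, are essentially correct and consistent with what the paper does.
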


\begin{proof}
Consider the elementary conjugate ${}^xY_{ij}(a,b)$. We argue by induction
on the length of $x\in\EU(2n,\FormR)$ in elementary generators. Let
$x=yT_{kl}(c)$, where $y\in \EU(2n,\FormR)$ is shorter than $x$.
\par
We start with the case $k\neq\pm l$.
\par\smallskip
$\bullet$ If $k,l\neq \pm i, \pm j$, then $T_{kl}(c)$ commutes with $z=Y_{ij}(a,b)$
and can be discarded.
\par\smallskip
$\bullet$ On the other hand, for any $h\neq \pm i, \pm j$  direct computations show that

\begin{align*}
&[T_{ih}(c),z]=T_{ih}(-abc-ababc)T_{jh}(-babc),\\\noalign{\vskip3truept}
&[T_{jh}(c),z]=T_{ih}(abac)T_{jh}(bac),\\
\noalign{\vskip3truept}
&[T_{hi}(c),z]=T_{ih}(cab)T_{jh}(-caba),\\
\noalign{\vskip3truept}
&[T_{hj}(c),z]=T_{ih}(cbab)T_{jh}(-cba-cbaba),
\end{align*}
Similarly, one has
\begin{alignat*}{1}
[T_{-i,h}(c),z]&=
[T_{-h,i}(-\lambda^{((\epsilon(h)+\epsilon(i))/2}c),z]r\\
&=T_{i,-h}(-\lambda^{((\epsilon(h)+\epsilon(i))/2}cab)T_{j,-h}(-\lambda^{((\epsilon(h)+\epsilon(i))/2}caba),\\
\noalign{\vskip3truept}
[T_{-j,h}(c),z]& =
[T_{-h,j}(-\lambda^{((\epsilon(h)+\epsilon(j))/2}c),z]\\
&=T_{i,-h}(-\lambda^{((\epsilon(h)+\epsilon(j))/2}cbab)
T_{j,-h}(-\lambda^{((\epsilon(h)+\epsilon(j))/2}cba-\lambda^{((\epsilon(h)+\epsilon(j))/2}cbaba),\\
\noalign{\vskip3truept}
[T_{h,-i}(c),z]& =
[T_{i,-h}(-\lambda^{(-(\epsilon(i)-\epsilon(h))/2}c),z] \\
                      &=T_{i,-h}(-\lambda^{(-(\epsilon(i)-\epsilon(h))/2}abac)T_{j,-h}(-\lambda^{(-(\epsilon(i)-\epsilon(h))/2}bac),\\
                      \noalign{\vskip3truept}
[T_{h,-j}(c),z]& =
[T_{j,-h}(-\lambda^{(-(\epsilon(j)-\epsilon(h))/2}c),z] \\
					  & =T_{i,-h}(-\lambda^{(-(\epsilon(j)-\epsilon(h))/2}abac)T_{j,-h}(-\lambda^{(-(\epsilon(j)-\epsilon(h))/2}bac)
\end{alignat*}
All factors on the right hand side belong already to 
$\EU(2n,(\FidealI)\circ(\FidealJ))$.
\par
If  $(k,l)=(\pm i, \pm j)$ or $(\pm j,\pm i)$, then we take an index  $h\ne \pm i, \pm j$ and rewrite $T_{kl}(c)$ as $[T_{k,h}(c),T_{h,l}(1)]$ and apply the previous items to get the same congruence modulo $\EU(2n,(\FidealI)\circ(\FidealJ))$.
\par
It remains to consider the case, where $k=-l$. 
\par\smallskip
$\bullet$ if $k\ne \pm i, \pm j$ then $T_{k,-k}(c)$ commutes with $z$ and can be discarded.
\par\smallskip
$\bullet$ Otherwise, we have
\begin{alignat*}{1}
[T_{i,-i}(c),z]=&T_{i,-i}(c-(1+ab+abab)c\overline{(1+ab+abab)})T_{j,-j}(-\lambda^{((\epsilon(i)-\epsilon(j))/2}babc\overline{bab})\\
&T_{i,-j}(\lambda^{((\epsilon(i)-\epsilon(j))/2}(1+ab+abab)c\overline{(bab)}),\\
\noalign{\vskip3truept}
[T_{j,-j}(c),z]=&T_{j,-j}(c-(1-ba)c\overline{(1-ba)})T_{i,-i}(\lambda^{((\epsilon(j)-\epsilon(i))/2}abac\overline{aba})\\& T_{i,-j}(-abac(1-\overline{ba})),\\
\noalign{\vskip3truept}
[T_{-i,i}(c),z]=&[T_{-i,i}(c),[T_{ij}(a),T_{ji}(b)]]\\=&[T_{-i,i}(c),[T_{-j,-i}(-\lambda^{((\epsilon(j)-\epsilon(i))/2}a),T_{-i,-j}(\lambda^{((\epsilon(i)-\epsilon(j))/2}b)]],\\
\noalign{\vskip3truept}
[T_{-j,j}(c),z]=&[T_{-j,j}(c),[T_{ij}(a),T_{ji}(b)]]\\ =&[T_{-j,-j}(c),[T_{-j,-i}(-\lambda^{((\epsilon(j)-\epsilon(i))/2}a),T_{-i,-j}(\lambda^{((\epsilon(i)-\epsilon(j))/2}b)]].
\end{alignat*}
The two last cases reduce to the first two. Hence 
all factors on the right belong to $\EU(2n,(\FidealI)\circ(\FidealJ))$.

We have shown that for $i\ne \pm j$,
$$ {}^xz\equiv {}^yz \pamod{\EU(2n,(\FidealI)\circ(\FidealJ))}. $$
\end{proof}

\begin{Lem}\label{Op-2}
Let $(\FormR)$ be an associative form ring with $1$, $n\ge 3$, and let $(\FidealI)$, $(\FidealJ)$ be form ideals of $(\FormR)$.
For any  $a\in \lambda^{-(\epsilon(i)+1)/2}\Gamma$, 
$b\in\lambda^{(\epsilon(i)-1)/2}\Delta$ and any 
$x\in\EU(2n,\FormR)$, 
one has
$$ {}^x Y_{i,-i}(a,b)\equiv Y_{i,-i}(a,b) \pamod{\EU(2n,(\FidealI)\circ(\FidealJ))}. $$
\end{Lem}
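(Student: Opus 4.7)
The plan is to mirror the inductive strategy used in Lemma~\ref{Op-1}. Writing $z = Y_{i,-i}(a,b) = [T_{-i,i}(a), T_{i,-i}(b)]$, I would induct on the length of $x \in \EU(2n,A,\Lambda)$ in elementary generators. Writing $x = y\,T_{kl}(c)$, it suffices to verify that for each elementary generator $T_{kl}(c)$, the commutator $[T_{kl}(c), z]$ lies in $\EU(2n,(\FidealI)\circ(\FidealJ))$; the congruence $^xz \equiv {}^yz$ then follows, and a further induction closes the argument.

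First I would dispose of the trivial cases. If $k \neq \pm l$ and neither $k$ nor $l$ belongs to $\{i,-i\}$, then by Steinberg relation (R3) the element $T_{kl}(c)$ commutes with both $T_{-i,i}(a)$ and $T_{i,-i}(b)$, hence with $z$. Analogously, if $k = -l$ with $k \neq \pm i$, the long root subgroups $X_{k,-k}$ and $X_{\pm i,\mp i}$ are orthogonal (their sums are never roots), so $T_{k,-k}(c)$ again commutes with $z$.

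The substantial cases are those in which $T_{kl}(c)$ shares an index with $\{i,-i\}$ but is still a \emph{short-root} element, say $k = \pm i$, $l = h$ with $h \neq \pm i$ (and the symmetric possibilities). In each such sub-case I would expand $[T_{kl}(c), z]$ using (C1), (C2) and apply the Chevalley commutator formulas (R4)--(R6) to each factor $T_{-i,i}(a)$ and $T_{i,-i}(b)$ appearing in $z$. Direct inspection (entirely analogous to the computations collating the short-root identities $[T_{ih}(c),z]$ and their variants in the proof of Lemma~\ref{Op-1}) shows that each resulting factor is an elementary transvection $T_{pq}(\xi)$ whose parameter $\xi$ is a word beginning or ending in $ab$, $ba$, $ac\bar{a}$, or $bc\bar{b}$, decorated by powers of $\lambda$ and by the auxiliary element $c$. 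Such parameters lie in $IJ+JI$ for short-root positions and in $\Gamma_{\min}(IJ+JI) + {}^J\Gamma + {}^I\Delta$ for long-root positions, hence each factor sits in $\EU(2n,(\FidealI)\circ(\FidealJ))$ by the very definition of the symmetrised product.

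The main obstacle will be the residual case in which $T_{kl}(c)$ is itself a long-root element at one of the positions $(i,-i)$ or $(-i,i)$: here one is commuting with an opposite long-root element, and the direct commutator is toral rather than elementary, so (R6) does not apply. To bypass this, I would use $n \ge 3$ to choose an auxiliary index $j \neq \pm i$ and rewrite (via (R5)) one of the long-root generators of $z$ as a commutator of short-root elements, for instance $T_{i,-i}(b) = [T_{ij}(b_1), T_{j,-i}(b_2)]$ for suitable $b_1 \in J$, $b_2 \in A$. Substituting this expression into $z$ and applying the Hall--Witt identity (Lemma~\ref{HW1}) together with the short-root cases just settled, plus Lemma~\ref{Op-1} itself, propagates the congruence past $T_{i,-i}(c)$ and $T_{-i,i}(c)$. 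A careful bookkeeping check confirms that every parameter produced en route sits in the appropriate piece of $(\FidealI)\circ(\FidealJ)$, completing the induction.
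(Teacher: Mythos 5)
Your overall inductive skeleton matches the paper's, and the handling of the trivial and short-root sub-cases is sound. However, the plan for the genuinely difficult sub-case --- when $T_{kl}(c)$ is a long-root element at $(i,-i)$ or $(-i,i)$ --- rests on a claim that does not hold: you propose to rewrite $T_{i,-i}(b) = [T_{ij}(b_1), T_{j,-i}(b_2)]$ via (R5). But relation (R5) reads $[T_{ij}(c),T_{j,-i}(d)] = T_{i,-i}(cd - \lambda^{-\varepsilon(i)}\bar d\,\bar c)$, so the parameter it produces is always of the shape $e - \lambda^{-\varepsilon(i)}\bar e$, i.e.\ lies in (a $\lambda$-translate of) $\Lambda_{\min}$. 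A general element $b$ of the relative form parameter $\Delta$, with $\Gamma_{\min}(J)\subseteq\Delta\subseteq\Gamma_{\max}(J)$, need not be of this form --- that is precisely the reason the form parameter is part of the data. So the decomposition you need is not available in general, and the Hall--Witt reduction you envisage cannot be launched.

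The paper takes a different route for this sub-case. For $(k,l)=(-i,i)$ one observes that $z = Y_{i,-i}(a,b)$ differs from the elementary conjugate $Z_{-i,i}(b,a)$ only by a factor in the same long root subgroup, so $[T_{-i,i}(c),z] = [T_{-i,i}(c),Z_{-i,i}(b,a)]$; this is then processed by the computation of Lemma~\ref{form-2}, whose key decomposition uses (R6) rather than (R5): one writes $T_{-i,i}(-a) = T_{hi}(-a)\cdot[T_{h,-h}(a),T_{-h,i}(1)]$, i.e.\ a long-root element as a short-root correction times a \emph{long--short} commutator in which the long-root argument is still the original $a\in\Gamma$, not something confined to $\Lambda_{\min}$. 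The companion case $(k,l)=(i,-i)$ is then reduced to $(-i,i)$ by passing to the inverse commutator $[z^{-1},T_{i,-i}(c)]$ and using normality of $\EU(2n,(\FidealI)\circ(\FidealJ))$ to absorb the resulting conjugation. You should replace your (R5)-based step with this (R6)-based one; otherwise the argument does not close.
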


\begin{proof}
Denote  $Y_{i,-i}(a,b)=[T_{i,-i}(a),T_{-i,i}(b)]$ by $z$.  
\par\smallskip
$\bullet$ If  $(k,l)=(-i,i)$, then 
$$ [T_{-i,i}(c),z]=[T_{-i,i}(c), [T_{i,-i}(a),T_{-i,i}(b)]]\\
=[T_{-i,i}(c), Z_{-i,i}(b,a)]. $$
\noindent
The same computation as in Case 2 in Lemma~\ref{Op-1} shows that 
$$ [T_{-i,i}(c),z] \in \EU(2n,(\FidealI)\circ(\FidealJ)). $$
\par\smallskip
$\bullet$ If $(k,l)=(i,-i)$, then
\begin{alignat*}{1}
[T_{i,-i}(c),z]=&[T_{i,-i}(c), [T_{i,-i}(a),T_{-i,i}(b)]]=\\
&[T_{i,-i}(c),  [T_{-i,i}(b),T_{i,-i}(a)]^{-1}]=\\
&[T_{-i,i}(b),T_{i,-i}(a)]^{-1}[[T_{-i,i}(b),T_{i,-i}(a)], T_{i,-i}(c)] [T_{-i,i}(b),T_{i,-i}(a)].
\end{alignat*}
Now the  inner factor $[[T_{-i,i}(b),T_{i,-i}(a)], T_{i,-i}(c)]$ falls into the previous case, hence belongs to  $\EU(2n,(\FidealI)\circ(\FidealJ))$.
But then the same applies also to its conjugate
$$[T_{-i,i}(b),T_{i,-i}(a)]^{-1}\cdot
\Big[[T_{-i,i}(b),T_{i,-i}(a)], T_{i,-i}(c)\Big]
\cdot [T_{-i,i}(b),T_{i,-i}(a)]. $$
\par\smallskip
$\bullet$ If $k=i$ and $j\ne\pm k$, then 
\begin{multline*}
[T_{i,j}(c),z]=[T_{i,j}(c), [T_{i,-i}(a),T_{-i,i}(b)]]=
T_{i,j}(-(ab+abab)c) T_{-i,j}(-babc)\cdot\\
T_{-j,j}(-\lambda^{((\epsilon(j)-\epsilon(i))/2}\overline{ c}bab\overline{c} -\lambda^{\epsilon(j)}(\overline{c}bababc+\overline{c}babababc)). 
\end{multline*}
\noindent
Since $a\in\lambda^{-(\epsilon(i)+1)/2}\Gamma$ and 
$b\in\lambda^{(\epsilon(i)-1)/2}\Delta$, it follows that 
the right side belongs to $\EU(2n,(\FidealI)\circ(\FidealJ))$.
\par\smallskip
$\bullet$  if $k=-i$ and $j\ne \pm k$, then 
\begin{alignat*}{1}
[T_{-i,j}(c),z]=&[T_{-i,j}(c), [T_{i,-i}(a),T_{-i,i}(b)]]\\
=&[T_{-i,i}(b),T_{i,-i}(a)][T_{-i,j}(c), [T_{-i,i}(b),T_{i,-i}(a)]]^{-1}[T_{-i,i}(b),T_{i,-i}(a)]^{-1}.
\end{alignat*}
By the previous case, 
$$ [T_{-i,j}(c), [T_{-i,i}(b),T_{i,-i}(a)]]\in \EU(2n,(\FidealI)\circ(\FidealJ)). $$
\noindent
As above, normality of $\EU(2n,(\FidealI)\circ(\FidealJ))$ then implies 
that the whole right side belongs to 
$\EU(2n,(\FidealI)\circ(\FidealJ))$. 
\par\smallskip
$\bullet$ Finally, the case $l=\pm i$ and $k\ne \pm i$ reduces 
to the case $k=\pm i$ via relation (R1).
\par\smallskip
We have shown that  
$$ {}^xz\equiv {}^yz \pamod{\EU(2n,(\FidealI)\circ(\FidealJ))}. $$
\noindent
By induction we get that 
$$ {}^xz\equiv z\pamod{\EU(2n,(\FidealI)\circ(\FidealJ))}. $$
\end{proof}


In particular, these results 
immediately imply the following additivity property of
the elementary commutators with respect to its arguments.

\begin{The}\label{symb}
Let $R$ be an associative ring with $1$, $n\ge 3$, and let $(\FidealI)$, $(\FidealJ)$ 
be form ideals of $R$. Then for any  $ i\neq j$, and any
$a,a_1,a_2\in(\FidealI)$, $b,b_1,b_2\in(\FidealJ)$ one has
\begin{align*}
&Y_{ij}(a_1+a_2,b)\equiv  Y_{ij}(a_1,b)\cdot Y_{ij}(a_1,b) 
\pamod{\EU(2n,(\FidealI)\circ(\FidealJ))},\\
\noalign{\vskip3truept}
&Y_{ij}(a,b_1+b_2)\equiv  Y_{ij}(a,b_1)\cdot Y_{ij}(a,b_2) 
\pamod{\EU(2n,(\FidealI)\circ(\FidealJ))},\\
\noalign{\vskip3truept}
&Y_{ij}(a,b)^{-1}\equiv  Y_{ij}(-a,b)\equiv Y_{ij}(a,-b) 
\pamod{\EU(2n,(\FidealI)\circ(\FidealJ))},\\
\noalign{\vskip3truept}
&Y_{ij}(ab_1,b_2)\equiv Y_{ij}(a_1,a_2b)\equiv e
\pamod{\EU(2n,(\FidealI)\circ(\FidealJ))}\\
\noalign{\vskip3truept}
&Y_{i,-i}(\overline{b_1}ab_1,b_2)\equiv Y_{i,-i}(a_1,\overline{a_2}ba_2)\equiv e
\pamod{\EU(2n,(\FidealI)\circ(\FidealJ))}
\end{align*}
\end{The}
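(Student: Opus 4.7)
The whole statement is an essentially formal consequence of Lemmas~\ref{Op-1} and \ref{Op-2}. Those lemmas assert exactly that for both short-root ($i\neq\pm j$) and long-root ($j=-i$) types, the element $Y_{ij}(a,b)$ is central in $\EU(2n,\FormR)$ modulo $N:=\EU(2n,(\FidealI)\circ(\FidealJ))$. In particular, any two elementary commutators of this form commute with each other modulo $N$, so the subgroup they generate is abelian in the quotient $\EU(2n,\FormR)/N$.

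For the additivity identities I would use the Steinberg relation (R2) to factor $T_{ji}(a_1+a_2)=T_{ji}(a_1)T_{ji}(a_2)$, together with the commutator identity (C2), to obtain
$$Y_{ij}(a_1+a_2,b)={}^{T_{ji}(a_1)}Y_{ij}(a_2,b)\cdot Y_{ij}(a_1,b).$$
The offending conjugation is killed modulo $N$ by Lemma~\ref{Op-1} (resp.\ Lemma~\ref{Op-2} in the long-root case), and then centrality lets me switch the order of the two factors to obtain the stated identity. Additivity in the second argument is entirely analogous, starting from (C1) and $T_{ij}(b_1+b_2)=T_{ij}(b_1)T_{ij}(b_2)$. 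The inverse identity follows immediately by taking $a_2=-a_1$ (resp.\ $b_2=-b_1$) and noting $Y_{ij}(0,b)=Y_{ij}(a,0)=e$.

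For the short-root symbol property $Y_{ij}(ab_1,b_2)\equiv e$, the point is simply that $ab_1\in IJ\subseteq IJ+JI$, so $T_{ji}(ab_1)\in\FU\bigl(2n,(\FidealI)\circ(\FidealJ)\bigr)\subseteq N$, and therefore its commutator with $T_{ij}(b_2)\in\EU(2n,\FormR)$ lies in $N$ by the very definition of $N$ as a normal subgroup of $\EU(2n,\FormR)$. The identity $Y_{ij}(a_1,a_2b)\equiv e$ is symmetric. For the long-root symbol property $Y_{i,-i}(\bar{b_1}ab_1,b_2)\equiv e$, writing $a=\lambda^{-(\epsilon(i)+1)/2}\gamma$ with $\gamma\in\Gamma$ and using that $\lambda$ is central and $J$ is involution-invariant (so $\bar b_1\in J$), one checks $\bar b_1\gamma b_1\in{}^J\Gamma\subseteq\Gamma\circ\Delta$, whence $T_{i,-i}(\bar b_1 a b_1)\in\FU\bigl(2n,(\FidealI)\circ(\FidealJ)\bigr)\subseteq N$ and the commutator with $T_{-i,i}(b_2)$ again lies in $N$ by normality. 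The identity $Y_{i,-i}(a_1,\bar{a_2}ba_2)\equiv e$ is handled identically, this time placing the triple product into $\lambda^{-(\epsilon(-i)+1)/2}\,{}^I\Delta$.

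The genuinely difficult content is already in Lemmas~\ref{Op-1}--\ref{Op-2}; the only step here that requires slight care is the form-parameter bookkeeping for the long-root symbol identities, to confirm that $\bar b_1 a b_1$ lies in the correct multiple of $\Gamma\circ\Delta$ rather than merely in the symmetrised ring-theoretic product. No Hall--Witt identity or localisation is needed, and the argument is purely elementary.
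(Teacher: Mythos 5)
Your proof is correct and follows essentially the same route as the paper: decompose $T_{\cdot\cdot}(a_1+a_2)$ via (R2), expand the commutator multiplicatively, strip the resulting conjugation using the centrality-mod-$N$ established in Lemmas~\ref{Op-1} and~\ref{Op-2}, and handle the last two symbol identities by noting that one of the two transvections already lies in $\FU\bigl(2n,(\FidealI)\circ(\FidealJ)\bigr)\subseteq N$, which is normal in $\EU(2n,\FormR)$. The only discrepancy is that the paper's proof text cites Lemmas~\ref{Op-2.1} and~\ref{Op-2.2} where the centrality Lemmas~\ref{Op-1} and~\ref{Op-2} are actually what is needed (and what you correctly invoke); your form-parameter bookkeeping $\bar b_1\gamma\overline{\bar b_1}\in{}^J\Gamma$ for the long-root symbol identity is also correct.
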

\begin{proof}
The first item can be derived from Lemma~\ref{Op-2.1} for 
$i\neq\pm j$  and Lemma~\ref{Op-2.2} for $i=-j$ as follows. By
definition
$$ Y_{ij}(a_1+a_2,b)=[T_{ij}(a_1+a_2),T_{ji}(b)]=
[T_{ij}(a_1)T_{ij}(a_2),T_{ji}(b)], $$
\noindent
and it only remains to apply multiplicativity of commutators in 
the first factor, and then apply Lemma~\ref{Op-2.1} and Lemma~\ref{Op-2.2} respectively. The second item is similar, 
and the third item follows. The last two items are obvious from 
the definition.
\end{proof}


\section{Unrelativisation}

Here we establish the first claim of Theorem~\ref{generators}, and thus also 
Theorems~\ref{equality2}, \ref{unrelative}  and \ref{T:4}. It immediately follows from the next two
lemmas, the first of which addresses the case of short roots, 
while the second one the case of long roots.

Recall that for the easier case of the general linear group over 
{\it commutative\/} rings this result was first established in 
2018 in our paper \cite{NZ1}. Then it was generalised to
arbitrary associative rings in 2019, together with the second 
claim of Theorem~\ref{generators}, see \cite{NZ2}. The proof of the following 
results exploit the same ideas as the proof of \cite{NZ2},
Lemma 4, but are noticeably more demanding from a technical viewpoint.

The following two lemmas address the case of short roots, where 
$i\ne \pm j$, and the case of long roots, where $i=-j$, respectively

\begin{Lem}\label{form-1}
Let $(\FormR)$ be an associative form ring with $1$, $n\ge 3$, and let $(\FidealI)$, $(\FidealJ)$ be form ideals of $(\FormR)$.
Suppose that  $a\in I$, $b\in J$, $r\in A$ and $i\ne \pm j$. Then
$$ [T_{ji}(a), Z_{ji}( b , r )]\in  [\FU(2n,\FidealI), \FU(2n, \FidealJ)]. $$
\end{Lem}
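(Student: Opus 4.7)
My plan is to reduce $[T_{ji}(a),Z_{ji}(b,r)]$ to elements manifestly in $[\FU(2n,\FidealI),\FU(2n,\FidealJ)]$ by means of the Hall--Witt identity~(C3), exploiting a third index $h\neq\pm i,\pm j$ which is available because $n\ge 3$. The key input is the Steinberg relation~(R4), giving $T_{ji}(a)=[T_{jh}(a),T_{hi}(1)]$. Setting $x=T_{jh}(a)$, $y=T_{hi}(1)$, $z=Z_{ji}(b,r)$, one has $[x^{-1},y]=T_{ji}(-a)$, so the leftmost Hall--Witt factor ${}^x[[x^{-1},y],z]$ is (a conjugate of) the commutator I want to locate.

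First I would establish two closed-form identities by a short direct calculation. Unwinding $Z_{ji}(b,r)=T_{ij}(r)T_{ji}(b)T_{ij}(-r)$ via~(C1), (R3), and~(R4), every pairwise commutator that arises is either a commuting case of~(R3) or an additive case of~(R4): no long-root coupling from~(R5)--(R6) intervenes because $i\neq\pm j$ and $h\neq\pm i,\pm j$. After cancellation of the auxiliary $T_{hj}(\pm cr)$ and $T_{ih}(\pm ra)$ that appear in the naive expansion, one finds
$$ [T_{jh}(a),Z_{ji}(b,r)]=T_{jh}(bra)T_{ih}(rbra),\qquad [T_{hi}(c),Z_{ji}(b,r)]=T_{hj}(-crbr)T_{hi}(crb),\ c\in A. $$
Because $bra,rbra\in JI\subseteq I\circ J$, the first formula shows $[z^{-1},x]=[Z_{ji}(-b,r),T_{jh}(a)]$ to lie in $\FU(2n,JI,\Gamma_{\min}(JI))\subseteq\EU(2n,(\FidealI)\circ(\FidealJ))$; applied with $c=-1$ the second shows $[y^{-1},z]=[T_{hi}(-1),Z_{ji}(b,r)]\in\FU(2n,\FidealJ)$.

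Next I would plug these into the Hall--Witt identity
$$ {}^x[[x^{-1},y],z]\cdot {}^z[[z^{-1},x],y]\cdot {}^y[[y^{-1},z],x]=1. $$
The middle factor $[[z^{-1},x],y]$ lies in $[\EU(2n,(\FidealI)\circ(\FidealJ)),\EU(2n,\FormR)]\subseteq\EU(2n,(\FidealI)\circ(\FidealJ))\subseteq[\FU(2n,\FidealI),\FU(2n,\FidealJ)]$ by normality of the relative elementary subgroup together with the inclusion recorded in the last lemma of~\S\ref{sec4}. The right-most factor $[[y^{-1},z],x]$ is a commutator of an element of $\FU(2n,\FidealJ)$ with $T_{jh}(a)\in\FU(2n,\FidealI)$, hence lies directly in $[\FU(2n,\FidealI),\FU(2n,\FidealJ)]$. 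Theorem~\ref{equality} combined with the same inclusion makes $[\FU(2n,\FidealI),\FU(2n,\FidealJ)]$ normal in $\EU(2n,\FormR)$, which lets me absorb the outer conjugations by $x,y,z\in\EU(2n,\FormR)$ without leaving this subgroup. Solving Hall--Witt for the first factor gives $[T_{ji}(-a),Z_{ji}(b,r)]\in[\FU(2n,\FidealI),\FU(2n,\FidealJ)]$, and since $-a$ ranges over $I$ as $a$ does, the conclusion for $[T_{ji}(a),Z_{ji}(b,r)]$ follows.

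The hard part will be the first step: verifying the two closed-form commutator formulas. The computation itself is routine Chevalley bookkeeping, but one has to patiently track how the auxiliary factors $T_{hj}(\pm cr)$ and $T_{ih}(\pm ra)$ generated by the first application of~(C1) cancel against those produced by the subsequent conjugation steps, so that only the ``triple product'' coefficients $crbr$, $crb$, $bra$, $rbra$ --- which are the ones witnessing the desired ideal-theoretic membership --- survive. Once those formulas are in hand, the rest is a purely formal Hall--Witt manipulation combined with the normality of $[\FU(2n,\FidealI),\FU(2n,\FidealJ)]$.
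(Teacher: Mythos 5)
Your proof is correct, and it organizes the argument differently from the paper. The paper's proof starts from the decomposition
$T_{ji}(-a)=[T_{jh}(1),T_{hi}(-a)]$
(putting the parameter $a$ on the $(h,i)$ edge and $1$ on the $(j,h)$ edge), conjugates this bracket by $Z_{ji}(b,r)$ in place, peels off factors $y\in\FU(2n,\FidealJ)$ and $z\in\FU(2n,(\FidealI)\circ(\FidealJ))$ from the two conjugates, and then grinds through a repeated application of (C1)--(C2) until every piece is visibly either in $\EU(2n,(\FidealI)\circ(\FidealJ))$ or a conjugate of a $Y$-type commutator, invoking Lemma~\ref{Op-1} at the final step. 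You instead set $x=T_{jh}(a)$, $y=T_{hi}(1)$, $z=Z_{ji}(b,r)$ (parameter $a$ on the $(j,h)$ edge), compute the two closed-form identities $[T_{jh}(a),Z_{ji}(b,r)]=T_{jh}(bra)T_{ih}(rbra)$ and $[T_{hi}(c),Z_{ji}(b,r)]=T_{hj}(-crbr)T_{hi}(crb)$ (both of which I have checked and which are exactly right, since with $h\neq\pm i,\pm j$ all pairwise brackets are governed by (R3)--(R4) with no $\lambda$ factors), and then read off the membership of the second and third Hall--Witt factors, solving (C3) for the first. This is the same underlying Chevalley bookkeeping, but the Hall--Witt packaging makes the bookkeeping transparent: instead of tracking an eight-factor expansion you isolate exactly two commutators to evaluate. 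That is a genuine expository gain.

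Two small points of care. First, your appeal to Theorem~\ref{equality} for the normality of $[\FU(2n,\FidealI),\FU(2n,\FidealJ)]$ in $\EU(2n,\FormR)$ is not circular --- Theorem~\ref{equality} is established in \S5 from Lemmas~\ref{Op-1} and~\ref{Op-2} together with the inclusion $\EU(2n,(\FidealI)\circ(\FidealJ))\le[\FU(2n,\FidealI),\FU(2n,\FidealJ)]$ recorded at the end of \S\ref{sec4}, and none of those depend on the lemma you are proving --- but the cleaner citation, and the one the paper itself uses, is to invoke Lemma~\ref{Op-1} (plus that inclusion) directly: the conjugate ${}^{y}[[y^{-1},z],x]$ expands by (C2) into a $\FU(2n,(\FidealI)\circ(\FidealJ))$-conjugate (which the normality of $\EU(2n,(\FidealI)\circ(\FidealJ))$ handles) times a conjugate of a genuine $Y_{hj}$ commutator (which Lemma~\ref{Op-1} handles). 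Second, you correctly observe that no WLOG on $\ep(i),\ep(j)$ is needed, since (R4) carries no $\lambda$-twist; the paper's normalization $\ep(i)=\ep(j)$ is a convenience, not a necessity.
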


\begin{proof}
Without loss of generality, we may assume that $\ep(i)=\ep(j)$. Pick an $h \ne i, j$ with $\ep(h)=\ep(i)$. Then
$$ x=[T_{ji}(a), Z_{ji}( b , r )]=
T_{ji}(a)\cdot {}^{Z_{ji}( b , r )}T_{ji}(-a)
    =T_{ji}(a)\cdot {}^{Z_{ji}( b , r )}[T_{jh}(1),T_{hi}(-a)]. $$
  Thus,
  \begin{align*}
x={}&T_{ji}(a) [{}^{Z_{ji}( b , r )}T_{jh}(1),{}^{Z_{ji}( b , r )}T_{hi}(-a)]=\\
&T_{ji}(a) [T_{jh}(1- b  r )T_{ih}(- r  b  r ),T_{hj}(-a r  b  r )T_{hi}(-a(1- r  b ))]=\\
&T_{ji}(a) [T_{jh}(1)y,T_{hi}(-a)z],
  \end{align*}
   where
  \begin{multline*}
  y=T_{jh}(- b  r )T_{ih}(- r  b  r )\in \FU(2n,\FidealJ), \\
  z = T_{hj}(-a r  b  r )T_{hi}(a r  b )\in \FU(2n, (\FidealI)\circ(\FidealJ)).
  \end{multline*}
  Since $T_{hi}(-a)\in\FU(2n,\FidealI)$, the second factor of the above commutator belongs to $\FU(2n,\FidealI)$. Thus,
  \begin{eqnarray}\label{eqn:1}
[T_{jh}(1)y,T_{hi}(-a)z] &=&  {}^{T_{jh}(1)}[y,T_{hi}(-a)z]\cdot [{T_{jh}(1)},T_{hi}(-a)z].
  \end{eqnarray}
  Now, the first commutator on the right hand side 
  \begin{eqnarray*}
    {}^{T_{jh}(1)}[y,T_{hi}(-a)z] &=&  {}^{T_{jh}(1)}[T_{jh}(- b  r )T_{ih}(- r  b  r ),T_{hi}(-a)T_{hj}(-a r  b  r )T_{hi}(a r  b )].
  \end{eqnarray*}
  Expanding the commutator above by its second argument, we obtain
  \begin{eqnarray*}
    &&  {}^{T_{jh}(1)}[T_{jh}(- b  r )T_{ih}(- r  b  r ),T_{hi}(-a)T_{hj}(-a r  b  r )T_{hi}(a r  b )]\\
    &=&{}^{T_{jh}(1)} [T_{jh}(- b  r )T_{ih}(- r  b  r ),T_{hi}(-a)]\\
    &&\qquad \qquad\qquad {}^{T_{jh}(1)T_{hi}(-a)}[T_{jh}(- b  r )T_{ih}(- r  b  r ),T_{hj}(-a r  b  r )T_{hi}(a r  b )].
  \end{eqnarray*}
The second factor above belongs to $\EU(2n,(\FidealI)\circ(\FidealJ))$. And the first factor above equals
\begin{multline*}
{}^{T_{jh}(1)T_{jh}(- b  r )} [T_{ih}(- r  b  r ),T_{hi}(-a)]\cdot [T_{jh}(- b  r ),T_{hi}(-a)]\\={}^{T_{jh}(1)T_{jh}(- b  r )} [T_{ih}(- r  b  r ),T_{hi}(-a)]\cdot T_{ji}( b  r a)\\\in {}^{T_{jh}(1)T_{jh}(- b  r )} [T_{ih}(- r  b  r ),T_{hi}(-a)]\cdot\EU(2n,(\FidealI)\circ(\FidealJ)).
\end{multline*}
On the other hand, the second commutator of (\ref{eqn:1}) equals
  $$
[{T_{jh}(1)},T_{hi}(-a)]\cdot {}^{T_{hi}(-a)}[{T_{jh}(1)},z].
$$
The second commutator in the last expression belongs to $\EU(2n,(\FidealI)\circ(\FidealJ))$, and remains there after elementary conjugations, while the first commutator equals $T_{ij}(-a)$.

Summarising the above, we see that
$$
  x\in {}^{T_{ji}(a)T_{jh}(1)T_{jh}(- b  r )} [T_{ih}(- r  b  r ),T_{hi}(-a)]\cdot 
  \EU(2n,(\FidealI)\circ(\FidealJ))
$$
which belongs to $[\FU(2n,\FidealI), \FU(2n, \FidealJ)]$ by Lemma~\ref{Op-1}. 
\end{proof}


\begin{Lem}\label{form-2}
Let $(\FormR)$ be an associative form ring with $1$, $n\ge 3$, and let $(\FidealI)$, $(\FidealJ)$ be form ideals of $(\FormR)$.
Suppose that  $a\in\Gamma$, $b\in\Delta$ and $r\in\Lambda$. Then
$$[T_{-i,i}(a),Z_{-i,i}(b,r)]\in [\FU(2n,\FidealI),\FU(2n, \FidealJ)]. $$
\end{Lem}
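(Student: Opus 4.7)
The plan is to imitate the proof of Lemma~\ref{form-1} with the long-root transvection $T_{-i,i}$ in place of the short-root $T_{ji}$. There the decisive step was to decompose $T_{ji}(-a)$ as a single short-root commutator $[T_{jh}(1),T_{hi}(-a)]$ and then to expand both conjugates ${}^{Z_{ji}(b,r)}(\,\cdot\,)$ through the Steinberg relations while tracking the resulting factors. The analogous decomposition here is furnished by (R5):
$$[T_{-i,h}(1),\,T_{h,i}(-c)] \;=\; T_{-i,i}\bigl(-c+\lambda^{\epsilon(i)}\bar c\bigr),$$
valid for any auxiliary index $h\ne\pm i$ with $\epsilon(h)=\epsilon(i)$, possible since $n\ge 3$. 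Assuming $\epsilon(i)=1$ without loss of generality (the case $\epsilon(i)=-1$ reduces to it via (R1)), this yields $T_{-i,i}(-a)=[T_{-i,h}(1),T_{h,i}(-c)]$ whenever $a$ is of the form $c-\lambda\bar c$ with $c\in I$, and in this case the second factor $T_{h,i}(-c)$ already lies in $\FU(2n,\FidealI)$.

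Since $\Gamma$ is generated additively by such $c-\lambda\bar c$ together with Jordan-type conjugates $d\,e\,\bar d$ with $d\in A,\ e\in\Gamma_{\min}(I)$ (and by the elements inherited from the closure property $d\Gamma\bar d\subseteq\Gamma$), I would reduce the general case to these basic generators. The reduction is supplied by the additivity of the $Y$-symbols established in Theorem~\ref{symb}, combined with the inclusion Lemma~5, which places $\EU(2n,(\FidealI)\circ(\FidealJ))$ inside the target mixed commutator. For the Jordan-type generators $a=d\,e\,\bar d$ I would further write $T_{-i,i}(a)={}^{g}T_{-i,i}(e)$ for a short-root element $g\in\EU(2n,\FormR)$ attached to $d$, and invoke Lemma~\ref{Op-2} to strip the conjugation modulo $\EU(2n,(\FidealI)\circ(\FidealJ))$, reducing to the basic case. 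Having secured the decomposition, the main body of the argument transcribes that of Lemma~\ref{form-1}: write
$$x \;=\; T_{-i,i}(a)\cdot\bigl[{}^{Z_{-i,i}(b,r)}T_{-i,h}(1),\ {}^{Z_{-i,i}(b,r)}T_{h,i}(-c)\bigr],$$
expand both conjugates using $Z_{-i,i}(b,r)=T_{i,-i}(r)T_{-i,i}(b)T_{i,-i}(-r)$ and the Steinberg relations, and identify each resulting factor as lying in $\FU(2n,\FidealI)$, in $\FU(2n,\FidealJ)$, or in $\FU(2n,(\FidealI)\circ(\FidealJ))$, the latter being already inside the mixed commutator by Lemma~5.

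The hard part is that the conjugating element $T_{i,-i}(r)$ is itself a long-root transvection, so its interaction with the short-root factors $T_{-i,h}(1)$ and $T_{h,i}(-c)$ is governed not by the short-short Chevalley rule (R4) but also by the long-short rule (R6), producing mixed factors of the form $T_{i,h}(\ldots)$ and $T_{-h,h}(\ldots)$ that have no counterpart in the proof of Lemma~\ref{form-1}. Classifying each such factor into the expected stratum is a long but mechanical bookkeeping exercise, to be carried out in exactly the same style as the short-root case.
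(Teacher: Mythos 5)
Your proposal takes a genuinely different decomposition of $T_{-i,i}(-a)$ from the one the paper uses, and that choice opens a gap that is not repaired by the subsequent reduction step. You propose to write
$$T_{-i,i}(-a)=[T_{-i,h}(1),\,T_{h,i}(-c)]$$
via relation (R5), a short--short commutator that lands in a long root position. The flaw is that (R5) only ever produces elements of the long root subgroup with parameter in $\Lambda_{\min}$ (in the relative case, in $\Gamma_{\min}(I)$), namely those of the form $-c+\lambda^{\ep(i)}\bar c$. You then claim that one can reduce to such parameters because ``$\Gamma$ is generated additively by such $c-\lambda\bar c$ together with Jordan-type conjugates $d e\bar d$ with $d\in A$, $e\in\Gamma_{\min}(I)$''; but that set is precisely $\Gamma_{\min}(I)$ itself, which in general is a proper subgroup of $\Gamma$. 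A relative form parameter is only required to satisfy $\Gamma_{\min}(I)\subseteq\Gamma\subseteq\Gamma_{\max}(I)$ and $c\Gamma\bar c\subseteq\Gamma$ --- there is no generating set for an arbitrary such $\Gamma$ in terms of traces and Jordan conjugates of trace elements. So your argument only covers the case $a\in\Gamma_{\min}(I)$, not the general $a\in\Gamma$ demanded by the lemma, and the parenthetical ``elements inherited from the closure property'' is vacuous: a closure condition does not produce generators.

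The paper sidesteps this obstruction by using (R6) rather than (R5): it decomposes
$$T_{-i,i}(-a)=T_{hi}(-a)\cdot[T_{h,-h}(a),\,T_{-h,i}(1)],$$
a long--short commutator in which the \emph{long} root transvection $T_{h,-h}(a)$ carries the very same parameter $a\in\Gamma$ (adjusted by the appropriate power of $\lambda$), while the short root companion $T_{-h,i}(1)$ has parameter $1\in A$. This works uniformly for every $a\in\Gamma$, with no appeal to any hypothetical generating set for the form parameter. The rest of the paper's argument then proceeds exactly as you anticipated for the short root lemma: expand ${}^{Z_{-i,i}(b,r)}(\cdot)$ on each factor, sort the resulting pieces into $\FU(2n,\FidealI)$, $\FU(2n,\FidealJ)$, and $\EU(2n,(\FidealI)\circ(\FidealJ))$, and invoke the normality of the last group. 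If you replace your (R5) decomposition by the (R6) one, your outline would match the paper's proof; as written it does not establish the lemma for general $\Gamma$.
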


\begin{proof} 
Without loss of generality, we may assume that $i>0$. Pick an $h>0$ with $h\ne i$. Then
\begin{multline*}
x=[T_{-i,i}(a), Z_{-i,i}( b , r )]=
T_{-i,i}(a)\cdot {}^{Z_{-i,i}( b , r )}T_{-i,i}(-a)=\\
T_{-i,i}(a)\cdot {}^{Z_{-i,i}( b , r )}\Big(T_{hi}(-a)\cdot
[T_{h,-h}(a),T_{-h,i}(1)]\Big). 
    \end{multline*}
\noindent
  Thus,
  \begin{align*}
x={}&T_{-i,i}( a )\cdot \Big({}{}^{Z_{-i,i}( b , r )}
T_{hi}(- a )\cdot 
[T_{h,-h}( a ),{}{}^{Z_{-i,i}( b , r )}T_{-h,i}(1)]\Big)=\\
&T_{-i,i}( a )\cdot T_{h,i}(- a (1- b  r ))\cdot T_{i,-h}(\lambda r  b  r \bar a )\cdot\Big[T_{h,-h}( a ),T_{-h,i}(1- r  b )\cdot T_{i,h}(\lambda r  b  r )\Big]
 \end{align*}
 \noindent
 Using additivity of root unipotents, we can rewrite this as
 $$ x=T_{-i,i}( a )T_{h,i}(- a )\cdot T_{h,i}(- a  b  r )
         T_{i,-h}(\lambda r  b  r \bar a )\cdot\Big[T_{h,-h}( a ),T_{-h,i}(1)T_{-h,i}(- r  b )\cdot T_{i,h}(\lambda r  b  r )\Big]. $$ 
\noindent
Clearly,
$$ T_{h,i}(- a  b  r )T_{i,-h}(\lambda r  b  r \bar a ) \in 
\EU(2n,(\FidealI)\circ(\FidealJ)). $$
\noindent
On the other hand, the commutator in the last expression equals
  \begin{multline*}
\Big[T_{h,-h}( a ),T_{-h,i}(1)T_{-h,i}(- r  b )\cdot 
T_{i,h}(\lambda r  b  r )\Big]=\\
\Big[T_{h,-h}( a ),T_{-h,i}(1)\Big]\cdot 
{}^{T_{-h,i}(1)}\Big[T_{h,-h}( a ), T_{-h,i}(- r  b )\cdot T_{i,h}(\lambda r  b  r )\Big]=\\
T_{h,i}( a  )T_{-i,i}(- a )\cdot {}^{T_{-h,i}(1)}
\Big[T_{h,-h}( a ), T_{-h,i}(- r  b )\cdot T_{i,h}(\lambda r  b  r )\Big].
\end{multline*}
\noindent
Again, clearly   
$$ \Big[T_{h,-h}( a ), T_{-h,i}(- r  b )\cdot T_{i,h}(\lambda r  b  r )\Big]\in [\FU(2n,\FidealI), \FU(2n, \FidealJ)]. $$
\noindent
On the other hand, the previous factors assemble to a left 
$T_{-i,i}(a)T_{h,i}(-a)$ conjugate 
of an element of $\EU(2n,(\FidealI)\circ(\FidealJ))$ ,
which is contained in $[\FU(2n,\FidealI), \FU(2n, \FidealJ)]$.
This proves Lemma~\ref{form-2}.
\end{proof}

Combined, these results imply the first claim of Theorem~\ref{generators}.

\section{Rolling over elementary commutators}

Now we pass to the final, and most difficult part of the
proof of Theorem~\ref{generators}, rolling an elementary commutator over 
to a different position. Since we assume $n\ge 3$, the case of
{\it short} root type elementary commutators is easy. It is
settled by
essentially the same calculation as for the general linear group
$\GL(n,R)$, $n\ge 3$, see \cite{NZ2,NZ3}. But for the case
of {\it long\/} root type elementary commutators we have to
imitate the proof of \cite{NZ4}, Theorems 4 and 5, for $\Sp(4,R)$.
In the presence of non-trivial involution, non-commutativity
and non-trivial form parameters this is quite a challenge.
In \S~12 we make some observations, 
to put this calculation in historical context.

\begin{Lem}\label{Op-2.1}
Let $(\FormR)$ be an associative form ring with $1$, $n\ge 3$, 
and let $(\FidealI)$, $(\FidealJ)$ be form ideals of $(\FormR)$. 
Then for any $i\neq \pm j$, any $h\ne\pm l$, and any 
$a\in I$, $b\in J$, $c_1,c_2\in A$, one has
$$ Y_{ij}(c_1ac_2,b)\equiv Y_{hl}(a,c_2bc_1)
\pamod{\EU(2n,(\FidealI)\circ(\FidealJ))}.  $$ 
\end{Lem}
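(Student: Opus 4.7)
The plan is to apply the Hall--Witt identity (C3) to a suitable triple of elementary transvections, exploiting the fact that by relation (R4) we may rewrite $T_{ji}(c_1ac_2)$ as a nested commutator $[T_{jk}(c_1a),T_{ki}(c_2)]$ for any index $k\ne\pm i,\pm j$. Such $k$ exists because $n\ge 3$.

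First I would establish a basic factor-shift identity. Fix $k$ as above and apply (C3) to $x=T_{jk}(c_1a)$, $y=T_{ki}(c_2)$, $z=T_{ij}(b)$. The three nested Hall--Witt commutators evaluate as follows. The term $[[x^{-1},y],z]$ collapses via (R4) to $[T_{ji}(-c_1ac_2),T_{ij}(b)]=Y_{ij}(-c_1ac_2,b)$. The term $[[z^{-1},x],y]$ equals $[T_{ik}(-bc_1a),T_{ki}(c_2)]=Y_{ki}(-bc_1a,c_2)$; since $-bc_1a\in JI\subseteq I\circ J$, the left factor $T_{ik}(-bc_1a)$ already lies in $\EU(2n,(\FidealI)\circ(\FidealJ))$, so by normality the whole term is trivial modulo $\EU(2n,(\FidealI)\circ(\FidealJ))$. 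The term $[[y^{-1},z],x]$ equals $[T_{kj}(-c_2b),T_{jk}(c_1a)]=Y_{jk}(-c_2b,c_1a)$, and the swap $Y_{jk}(u,v)=Y_{kj}(v,u)^{-1}$ together with the sign laws from Theorem~\ref{symb} identifies this with $Y_{kj}(c_1a,c_2b)$. Stripping the outer conjugations by means of Lemma~\ref{Op-1}, Hall--Witt collapses to the shift identity
\[ Y_{ij}(c_1ac_2,b)\equiv Y_{kj}(c_1a,c_2b)\pamod{\EU(2n,(\FidealI)\circ(\FidealJ))}. \]

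The identical argument applied to the mirror factorisation $T_{ji}(c_1ac_2)=[T_{jm}(c_1),T_{mi}(ac_2)]$ with $m\ne\pm i,\pm j$ produces the companion identity $Y_{ij}(c_1ac_2,b)\equiv Y_{im}(ac_2,bc_1)$. Iterating, the first shift moves $Y_{ij}(c_1ac_2,b)$ to $Y_{kj}(c_1a,c_2b)$, and a second application of the mirror shift --- now with $c_1$ as the leftmost factor of the first argument and a fresh $m\ne\pm k,\pm j$ --- carries this to $Y_{km}(a,c_2bc_1)$. Specialising both shift identities to $c_1=1$ or $c_2=1$ yields the position-independence statements $Y_{ij}(a,b)\equiv Y_{kj}(a,b)$ and $Y_{ij}(a,b)\equiv Y_{im}(a,b)$; chaining such one-index moves through intermediate positions (using $n\ge 3$ to keep spare indices available) lets us replace $(k,m)$ by any prescribed pair $(h,l)$ with $h\ne\pm l$, producing the claimed congruence.

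The main technical obstacle is the Hall--Witt bookkeeping: carefully tracking signs, inverses, and outer conjugations, and especially verifying that exactly one of the three Hall--Witt terms has an argument in $IJ+JI$ and so vanishes modulo $\EU(2n,(\FidealI)\circ(\FidealJ))$. It is precisely this selective vanishing that converts the three-term Hall--Witt relation into a usable shift identity; without it one would only obtain an awkward ternary relation among elementary commutators at three different positions. A secondary concern is maintaining a supply of free indices throughout the iteration, since two successive Hall--Witt invocations plus the position-changing step force us to invoke the hypothesis $n\ge 3$ several times over.
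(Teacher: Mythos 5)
Your argument is correct, and at bottom it is the same computation as the paper's, repackaged through the Hall--Witt identity. The paper's proof also begins by factoring the transvection with argument $c_1ac_2$ via (R4) through a spare index, then expands the conjugation by the $b$-transvection by hand using (C1)/(C2): the cross term $T_{hi}(-ac_2b)$, whose argument lies in $IJ$, is discarded exactly as your middle Hall--Witt term $[T_{ik}(-bc_1a),T_{ki}(c_2)]$ is, the surviving cross term produces the new elementary commutator at the shifted position, and Lemma~\ref{Op-1} strips the remaining conjugations. Your two shift identities are mirror images of the paper's two successive moves (the paper first sends $c_1$ to the right of $b$, landing at $[T_{hj}(ac_2),T_{jh}(bc_1)]$, then $c_2$ to the left of $b$; you do these in the opposite order), and both proofs conclude with the same chaining of one-index moves, using $n\ge 3$ to keep a spare index. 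What the Hall--Witt packaging buys is conceptual transparency: the ``selective vanishing'' of exactly one of the three symmetric terms is visible at a glance, and this is the form in which the computation appears in the sources the paper surveys in \S~12 (and in the triple congruence of \cite{NZ6}). What it costs is that the sign and inversion bookkeeping is pushed into Theorem~\ref{symb} rather than being read off directly; since that theorem is established in \S~5 from Lemmas~\ref{Op-1} and \ref{Op-2}, this is legitimate, but you should make the appeal explicit when converting $[T_{kj}(-c_2b),T_{jk}(c_1a)]$ into $Y_{kj}(c_1a,c_2b)$.
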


\begin{proof}
Take any $h\neq\pm i,\pm j$, and rewrite the elementary 
commutator $z=Y_{ij}(c_1ac_2,b)$ on the left hand side of the
above congruence as follows
\begin{multline*}
z=\big[T_{ij}(c_1ac_2),T_{ji}(b)\big]
=T_{ij}(c_1ac_2)\cdot {}^{T_{ji}(b)}T_{ij}(-c_1ac_2)=\\
T_{ij}(c_1ac_2)\cdot {}^{T_{ji}(b)} [T_{hj}(ac_2),T_{ih}(c_1)]. 
\end{multline*}
\noindent
    Expanding the conjugation by $T_{ji}(b)$, we see that
\begin{multline*}
z=T_{ij}(c_1ac_2)\cdot  [{}^{T_{ji}(b)}T_{hj}(ac_2),
{}^{T_{ji}(b)}T_{ih}(c_1)]=\\
T_{ij}(c_1ac_2)\cdot  \Big[[T_{ji}(b),T_{hj}(ac_2)]
T_{hj}(ac_2),T_{ih}(c_1)[T_{ih}(-c_1),{T_{ji}(b)}]\Big]=\\
        T_{ij}(c_1ac_2)\cdot  \Big[T_{hi}(-ac_2b)T_{hj}(ac_2),T_{ih}(c_1)T_{jh}(bc_1)\Big].
\end{multline*}
\noindent
Now, the first factor $T_{hi}(-ac_2b)$ of the first argument in 
this last commutator already belongs to the group 
$\FU(2n,(\FidealI)\circ(\FidealJ))$.
Thus, as above,
$$ z\equiv  T_{ij}(c_1ac_2)\cdot  \Big[T_{hj}(ac_2),T_{ih}(c_1)T_{jh}(bc_1)\Big] \pamod{\EU(2n,(\FidealI)\circ(\FidealJ))}. $$
\noindent
Using multiplicativity of the commutator w.r.t. the second argument, cancelling the first two factors of the resulting expression, and then applying Lemma~\ref{Op-1} we see that
$$ z\equiv
{}^{T_{ih}(c_1)}\big[T_{hj}(ac_2),T_{jh}(bc_1)\big]
\equiv \big[T_{hj}(ac_2),T_{jh}(bc_1)\big]
\pamod{\EU(2n,(\FidealI)\circ(\FidealJ))}. $$
\par
On the other hand, choosing another index $l\neq\pm j,\pm h$ and rewriting the commutator
$\big[T_{hj}(ac_2),T_{jh}(bc_1)\big]$ on the right hand side of the
last congruence as
$$ \big[T_{hj}(ac_2),T_{jh}(bc_1)\big]=
\big[[T_{hl}(a),T_{lj}(c_2)],T_{jh}(bc_1)\big], $$
\noindent
by the same argument we get the congruence
$$ z\equiv\big[T_{hj}(ac_2),T_{jh}(bc_1)\big]\equiv
\big[T_{hl}(a),T_{lh}(c_2bc_1)\big]
\pamod{\EU(2n,(\FidealI)\circ(\FidealJ))}. $$
\par 
Obviously, for $n\ge 3$ we can pass from any position $(i,j)$,
$i\neq j$, to any other such position $(k,m)$, $k\neq\pm m$, 
by a sequence of at most three such elementary moves.
\end{proof}


\begin{Lem}\label{Op-2.2}
Let $(\FormR)$ be an associative form ring with $1$, $n\ge 3$, 
and let $(\FidealI)$, $(\FidealJ)$ be form ideals of $(\FormR)$. 
Then for any $ -n\le i\le n$, any $-n\le k\le n$, and any 
$a\in \lambda^{-(\epsilon(i)+1)/2}\Gamma$, 
$b\in \lambda^{(\epsilon(i)-1)/2}\Delta$, $c\in A$, one has
$$ Y_{i,-i}(ca\bar c,b)\equiv Y_{k,-k}(\lambda^{(\epsilon(i)-\epsilon(k))/2}a,-\lambda^{(\epsilon(k)-\epsilon(i))/2}\bar c  bc)\pamod{\EU(2n,(\FidealI)\circ(\FidealJ))}.  $$ 
\end{Lem}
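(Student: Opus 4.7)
The plan is to mirror the proof of Lemma~\ref{Op-2.1}, with Chevalley's commutator relation (R6) playing the role that (R4) played in the short-root case. Since the congruence is transitive in $k$, it suffices to treat $k\neq\pm i$; the cases $k=\pm i$ (which already contain the nontrivial content that $c$ and $\bar c$ can be transferred between the two arguments of $Y_{i,-i}$) then follow by composing through an auxiliary long-root index, available since $n\ge 3$, together with relation (R1).

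For $k\neq\pm i$, the key move is to apply (R6) with long-root position $(k,-k)$ and short-root partner $T_{-k,-i}(\bar c)$. Setting $a' = \lambda^{(\ep(i)-\ep(k))/2} a \in \lambda^{-(\ep(k)+1)/2}\Gamma$, one obtains an identity of the shape
\begin{equation*}
[T_{k,-k}(a'), T_{-k,-i}(\bar c)] = T_{k,-i}(a'\bar c)\cdot T_{i,-i}(\pm\lambda^{\ast} ca\bar c),
\end{equation*}
with $\lambda$-exponent and sign fixed by (R6). This expresses $T_{i,-i}(ca\bar c)$, up to a short-root factor in $\FU(2n,\FidealI)$ and a $\lambda$-twist, as the commutator $[T_{k,-k}(a'), T_{-k,-i}(\bar c)]$. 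Substituting into $Y_{i,-i}(ca\bar c, b) = [T_{i,-i}(ca\bar c), T_{-i,i}(b)]$ and expanding by (C1)--(C2), the short-root correction contributes a commutator whose value is computed by another application of (R6) and is seen to lie in $\EU(2n,(\FidealI)\circ(\FidealJ))$, its long-root parameter falling into ${}^I\Delta\subseteq\Gamma\circ\Delta$.

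The principal remaining piece is the triple commutator $\big[[T_{k,-k}(a'), T_{-k,-i}(\bar c)], T_{-i,i}(b)\big]$. The three-subgroup lemma (Lemma~\ref{HW1}) bounds this by the two Hall--Witt rearrangements; one of those factors vanishes by (R3) since $\{k,-k\}\cap\{-i,i\}=\emptyset$, leaving only $\big[T_{k,-k}(a'), [T_{-k,-i}(\bar c), T_{-i,i}(b)]\big]$. A second use of (R6) (after normalising by (R1)) computes the inner commutator as the long-root element $T_{-k,k}\bigl(-\lambda^{(\ep(k)-\ep(i))/2}\bar c b c\bigr)$ together with a short-root correction in $\FU(2n,\FidealJ)$ which can be discarded modulo $\EU(2n,(\FidealI)\circ(\FidealJ))$. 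Invoking Lemmas~\ref{Op-1} and \ref{Op-2} to eliminate the ambient conjugations then produces the target $Y_{k,-k}\bigl(a',\,-\lambda^{(\ep(k)-\ep(i))/2}\bar c b c\bigr)$ modulo $\EU(2n,(\FidealI)\circ(\FidealJ))$.

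The main obstacle is the bookkeeping of the $\lambda$-twists and the involution $\bar{(\cdot)}$ introduced at each application of (R6): every invocation yields a quadratic expression $\bar d a' d$ and a $\lambda$-power depending on $\ep(i)$, $\ep(k)$, and the signs of the auxiliary indices, and all these contributions must combine precisely to the prescribed target $-\lambda^{(\ep(k)-\ep(i))/2}\bar c b c$. What renders this feasible is that by Lemmas~\ref{Op-1}, \ref{Op-2} and Theorem~\ref{symb} the quotient $[\FU(2n,\FidealI),\FU(2n,\FidealJ)]/\EU(2n,(\FidealI)\circ(\FidealJ))$ is abelian with additive $Y$-symbols; the entire argument therefore takes place in an abelian quotient, so the verification collapses to a single parameter identity that uses only the centrality of $\lambda$ in $A$ and the form-parameter axiom $c\Lambda\bar c\subseteq\Lambda$.
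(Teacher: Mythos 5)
Your outline is correct in its essentials and rests on the same key mechanism as the paper's proof: use relation (R6) to split a long root element $T_{i,-i}(\pm ca\bar c)$ into a short root factor times a commutator of $T_{k,-k}(a')$ with a short root element carrying $c$, so that after shuffling the bracket the factor $c$ migrates from the $\Gamma$-side to the $\Delta$-side, producing $[T_{-k,k}(-\lambda^{(\ep(k)-\ep(i))/2}\bar c bc),T_{k,-k}(\lambda^{(\ep(i)-\ep(k))/2}a)]$ as the main term, with all residual terms landing in $\EU(2n,(\FidealI)\circ(\FidealJ))$ because their parameters lie in $I\circ J$, ${}^J\Gamma$ or ${}^I\Delta$. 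The organizational route differs, though: you decompose the \emph{first} argument $T_{i,-i}(ca\bar c)$ of $Y_{i,-i}$ and then invoke the Hall--Witt identity (with the factor $[T_{-i,i}(b),T_{k,-k}(a')]=e$ killed by (R3)) to flip the bracketing of the triple commutator, whereas the paper splits the conjugated copy ${}^{T_{-i,i}(b)}T_{i,-i}(-ca\bar c)$ via the same (R6) identity and then expands the conjugation by $T_{-i,i}(b)$ term by term, never needing Hall--Witt; both routes arrive at the same main term and the same list of residual commutators to check. Two caveats. First, you defer exactly the part where the content of the precise statement lives --- the $\lambda$-powers and signs; your own first display already produces $T_{i,-i}(-\lambda^{-\ep(k)}ca\bar c)$ rather than $T_{i,-i}(ca\bar c)$, so $a'$ (or $c$) must be renormalised before the target constants can be matched, and ``a single parameter identity'' undersells how much tracking remains. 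Second, your reduction of $k=\pm i$ to $k\ne\pm i$ by transitivity needs care: composing the congruence from $i$ to an auxiliary $h$ and back to $i$ (with $c=1$ on the second leg) returns $Y_{i,-i}(a,+\bar c bc)$, while the stated formula at $k=i$ reads $Y_{i,-i}(a,-\bar c bc)$; these differ by an inversion of the symbol modulo $\EU(2n,(\FidealI)\circ(\FidealJ))$, so either the composition must be set up differently or a sign in the statement has to be revisited --- a defect you share with the paper, whose direct argument likewise requires $k\ne\pm i$.
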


\begin{proof}
Rewrite the elementary commutator $z=Y_{i,-i}(ca\bar c,b)$ 
on the left hand side of the above congruence as follows
\begin{multline*}
z=T_{i,-i}(ca\bar c )\cdot 	^{T_{-i,i}(b)}T_{i,-i}(-ca\bar c)=\\
T_{i,-i}(ca\bar c)\cdot ^{T_{-i,i}(b)}\Big(T_{i,-k}(\lambda^{(\epsilon(i)-\epsilon(k))/2}ca)[T_{i,k}( c),T_{k,-k}(-\lambda^{(\epsilon(i)-\epsilon(k))/2}a)]\Big).
\end{multline*}
\noindent
Expanding the conjugation by $T_{-i,i}(b)$, we see that
$$ z=T_{i,-i}(ca\bar c)\cdot ^{T_{-i,i}(b)}T_{i,-k}(\lambda^{(\epsilon(i)-\epsilon(k))/2}ca)\cdot\Big[{}^{T_{-i,i}(b)}T_{i,k}(c),{}^{T_{-i,i}(b)}T_{k,-k}(-\lambda^{(\epsilon(i)-\epsilon(k))/2}a)\Big]. $$
\noindent
Clearly, the first two factors 
$$ y=T_{i,-i}(ca\bar c)\cdot^{T_{-i,i}(b)}T_{i,-k}(\lambda^{(\epsilon(i)-\epsilon(k))/2}ca) $$
\noindent
can be rewritten as
$$ y=T_{i,-i}(ca\bar c)\cdot
\big[{T_{-i,i}(b)},
T_{i,-k}(\lambda^{(\epsilon(i)-\epsilon(k))/2}ca)\big]\cdot
T_{i,-k}(\lambda^{(\epsilon(i)-\epsilon(k))/2}ca) $$
\noindent
which gives us the following congruence
$$ y
\equiv T_{i,-i}(ca\bar c)T_{i,-k}(\lambda^{(\epsilon(i)-\epsilon(k))/2}ca) \pamod{\EU(2n,(\FidealI)\circ(\FidealJ))}. $$
\par
On the other hand, the commutator 
$$ u=\Big[{}^{T_{-i,i}(b)}T_{i,k}(c),T_{k,-k}(-\lambda^{(\epsilon(i)-\epsilon(k))/2}a)\Big] $$ 
\noindent
in the expression of $z$ equals
$$ u=\Big[T_{-i,k}(bc)
T_{-k,k}(-\lambda^{(\epsilon(k)-\epsilon(i))/2}\bar c b c)
T_{i,k}(c), T_{k,-k}(-\lambda^{(\epsilon(i)-\epsilon(k))/2}a)\Big]. $$
\noindent
Expanding this last expression, we get
\begin{multline*}
 u={}^{x}[T_{i,k}(c), 
T_{k,-k}(-\lambda^{(\epsilon(i)-\epsilon(k))/2}a)]\cdot\\
{}^{y}[T_{-k,k}(-\lambda^{(\epsilon(k)-\epsilon(i))/2}\bar c bc),
T_{k,-k}(-\lambda^{(\epsilon(i)-\epsilon(k))/2}a)]\cdot\\
[T_{-i,k}(bc),T_{k,-k}(-\lambda^{(\epsilon(i)-\epsilon(k))/2}a)], 
\end{multline*}
\noindent
where 
$$ x=T_{-i,k}(bc)
T_{-k,k}(-\lambda^{(\epsilon(k)-\epsilon(i))/2}\bar c b c),\qquad y=T_{-i,k}(bc). $$
\noindent
It is easy to see that 
$$ [T_{-i,k}(bc),T_{k,-k}(-\lambda^{(\epsilon(i)-\epsilon(k))/2}a)] 
\in\EU(2n,(\FidealI)\circ(\FidealJ)), $$
\noindent
so we can drop it. Further, by Lemma~\ref{Op-2},
modulo $\EU(2n,(\FidealI)\circ(\FidealJ))$ the second
factor can be simplified as follows
\begin{multline*}
{}^{y}[T_{-k,k}(-\lambda^{(\epsilon(k)-\epsilon(i))/2}\bar c b c),
T_{k,-k}(-\lambda^{(\epsilon(i)-\epsilon(k))/2}a)]\equiv\\
[T_{-k,k}(-\lambda^{(\epsilon(k)-\epsilon(i))/2}\bar c bc),
T_{k,-k}(-\lambda^{(\epsilon(i)-\epsilon(k))/2}a)] 
\pamod{\EU(2n,(\FidealI)\circ(\FidealJ))}
\end{multline*}
\noindent
But by Theorem~\ref{symb} one has
\begin{multline*}
[T_{-k,k}(-\lambda^{(\epsilon(k)-\epsilon(i))/2}\bar c bc),T_{k,-k}(-\lambda^{(\epsilon(i)-\epsilon(k))/2}a)]\equiv\\ 
[T_{k,-k}(\lambda^{(\epsilon(i)-\epsilon(k))/2}a), T_{-k,k}(-\lambda^{(\epsilon(k)-\epsilon(i))/2}\bar c bc)] 
\pamod{\EU(2n,(\FidealI)\circ(\FidealJ))}.
\end{multline*}

\par
Summarising the above, we get
\begin{multline*}
z\equiv T_{i,-i}(a)
T_{i,-k}(\lambda^{(\epsilon(k)-\epsilon(i))/2}ca)\cdot 
{}^{x}[T_{i,k}(c), 
T_{k,-k}(-\lambda^{(\epsilon(k)-\epsilon(i))/2}a)]\cdot\\
[T_{k,-k}(\lambda^{(\epsilon(i)-\epsilon(k))/2}a), T_{-k,k}(-\lambda^{(\epsilon(k)-\epsilon(i))/2}\bar c bc)] 
\pamod{\EU(2n,(\FidealI)\circ(\FidealJ))}.
 \end{multline*}
 \noindent
Thus, to finish the proof it suffices to show that  
$$ v=T_{i,-i}(a)T_{i,-k}(\lambda^{(\epsilon(k)-\epsilon(i))/2}ca)\cdot ^{x}[T_{i,k}(c), T_{k,-k}(-\lambda^{(\epsilon(k)-\epsilon(i))/2}a)]
$$
\noindent
belongs to $\EU(2n,(\FidealI)\circ(\FidealJ))$. Clearly,
$$ v=T_{i,-i}(ca\bar c)
T_{i,-k}(\lambda^{(\epsilon(k)-\epsilon(i))/2}ca)\cdot 
{}^{x} T_{i,-k}(-\lambda^{(\epsilon(k)-\epsilon(i))/2}ca)
T_{i,-i}(-ca\bar c), $$
\noindent
can be rewritten as
\begin{multline*}
v=[T_{i,-i}(ca\bar c)
T_{i,-k}(\lambda^{(\epsilon(k)-\epsilon(i))/2}ca),x]=\\
[T_{i,-i}(ca\bar c)T_{i,-k}(\lambda^{(\epsilon(k)-\epsilon(i))/2}ca), 
T_{-i,k}(bc)T_{-k,k}(-\lambda^{(\epsilon(k)-\epsilon(i))/2}\bar cbc)].
\end{multline*}
\noindent
Expanding this last commutator w.r.t. its first and second arguments,
we express it as the product of elementary conjugates 
of the four following commutators
\par\smallskip
$\bullet$ $[T_{i,-i}(ca\bar c), T_{-i,k}(bc)]$,
\par\smallskip
$\bullet$ $[T_{i,-i}(ca\bar c), T_{-k,k}(-\lambda^{(\epsilon(k)-\epsilon(i))/2}\bar c bc)]$,
\par\smallskip
$\bullet$ $[T_{i,-k}(\lambda^{(\epsilon(k)-\epsilon(i))/2}ca), T_{-i,k}(bc)]$, 
\par\smallskip
$\bullet$ 
$[T_{i,-k}(\lambda^{(\epsilon(k)-\epsilon(i))/2}ca), T_{-k,k}(-\lambda^{(\epsilon(k)-\epsilon(i))/2}\bar c bc)]$.
\par\smallskip\noindent
A direct computation convinces us that each of these commutators belongs to the elementary subgroup 
$\EU(2n,(\FidealI)\circ(\FidealJ))$. This finishes the 
proof of lemma, and thus also of Theorem~\ref{generators}. 
\end{proof}


\section{Mat[ch]ing elementary commutators of different root lengths}

In this section we prove a congruence connecting elementary
commutators of long root type with those of short root type.
In the case, where one of the relative form parameters is
as small as possible (=minimal), this congruence can be used 
to eliminate long root type elementary commutators. On the
other hand when one of the relative form parameters is as large as possible (=equals the corresponding ideal), one can abandon
short root type elementary commutators.

\begin{Lem}\label{long-short}
  Let $(\FormR)$ be an associative form ring with $1$, $n\ge 3$, and let $(\FidealI)$ , $(\FidealJ)$
  be form ideals of $(\FormR)$. Then for any  $ -n\le i\le n$, any
  $-n\le k\le n$, and $a\in I$, $b\in \lambda^{(\epsilon(i)-1)/2} \Delta$,  one has
$$\Big[T_{i,-i}\big(a-\lambda^{\ep(-i)} \bar a \big),T_{-i,i}\big(b\big)\Big] \equiv \big[ T_{i,k}(a), T_{k,i}(b)\big]
\pamod{\EU(2n,(\FidealI)\circ(\FidealJ))}. $$
\end{Lem}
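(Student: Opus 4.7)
The plan is to express the long-root element on the left-hand side as a nested commutator via the Steinberg relation (R5), and then to apply the Hall--Witt identity (C3) to the resulting triple commutator. Since $\epsilon(-i)=-\epsilon(i)$, taking $c=a$, $d=1$, and $j=k$ in (R5) gives
$$ T_{i,-i}(a-\lambda^{\epsilon(-i)}\bar a)=[T_{ik}(a),T_{k,-i}(1)],$$
so, writing $u=T_{ik}(a)$, $v=T_{k,-i}(1)$, $w=T_{-i,i}(b)$, and $N:=\EU(2n,(\FidealI)\circ(\FidealJ))$, the left-hand side of the lemma is the triple commutator $[[u,v],w]$.

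I will then apply the Hall--Witt identity
$$ {}^u[[u^{-1},v],w]\cdot{}^w[[w^{-1},u],v]\cdot{}^v[[v^{-1},w],u]=1$$
and analyse the three factors modulo $N$. The first factor is, by Lemma~\ref{Op-2} (centrality of long-root elementary commutators modulo $N$) together with Theorem~\ref{symb}, congruent modulo $N$ to the inverse of the left-hand side of the lemma, namely $[T_{i,-i}(a-\lambda^{\epsilon(-i)}\bar a),T_{-i,i}(b)]^{-1}$. The second factor lies in $N$: expanding $[w^{-1},u]=[T_{-i,i}(-b),T_{ik}(a)]$ by the Chevalley commutator formula for a long--short pair produces
$$ T_{-i,k}(-ba)\cdot T_{-k,k}\bigl(\lambda^{(\epsilon(k)-\epsilon(i))/2}\bar a b a\bigr),$$
and both factors belong to $N$: the first because $ba\in JI\subseteq I\circ J$, and the second because, writing $b=\lambda^{(\epsilon(i)-1)/2}\delta$ with $\delta\in\Delta$, one has $\bar a b a=\lambda^{(\epsilon(i)-1)/2}\bar a\delta a$ with $\bar a\delta a\in{}^I\Delta\subseteq\Gamma\circ\Delta$ (using the form-parameter closure $c\Delta\bar c\subseteq\Delta$ and $\bar I=I$), so the argument of $T_{-k,k}$ sits precisely in the admissible set $\lambda^{(\epsilon(k)-1)/2}(\Gamma\circ\Delta)$.

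It therefore remains, by Hall--Witt, to identify the third factor ${}^v[[v^{-1},w],u]$ modulo $N$ with $[T_{ik}(a),T_{ki}(b)]$ (up to inversion). Here $[v^{-1},w]=[T_{k,-i}(-1),T_{-i,i}(b)]$ expands by Chevalley as $T_{k,i}(-b)\cdot T_{k,-k}(\lambda^{-(\epsilon(k)+1)/2}\delta)$, and the outer commutator with $u=T_{ik}(a)$ is split via the identity $[xy,z]={}^x[y,z]\cdot[x,z]$. The short-root piece $[T_{k,i}(-b),T_{ik}(a)]$ is the elementary commutator $Y_{ik}(-b,a)$, which, by Lemma~\ref{Op-1} and Theorem~\ref{symb}, is congruent modulo $N$ to $[T_{ik}(a),T_{ki}(b)]^{-1}$; the long-root piece contributes only elements of $N$ after one expands $[T_{k,-k}(\lambda^{-(\epsilon(k)+1)/2}\delta),T_{ik}(a)]$ by Chevalley and verifies that each of its factors lies in $N$ by the same form-parameter bookkeeping as above. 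Stripping the outer conjugation by $v$ via Lemma~\ref{Op-1} then yields the desired congruence. The principal obstacle is precisely the bookkeeping of $\lambda$-powers in these mixed-root Chevalley expansions: the correct exponents depend on both $\epsilon(i)$ and $\epsilon(k)$, and at each step one must invoke the form-parameter axiom $c\Delta\bar c\subseteq\Delta$ together with $\bar I=I$ at the right moment so that every long-root correction entry lies in $\lambda^{-(\epsilon(\bullet)+1)/2}(\Gamma\circ\Delta)$. The calculation is analogous in spirit to Lemma~\ref{Op-2.2}, but the interaction between long- and short-root subgroups makes the rearrangements noticeably more delicate.
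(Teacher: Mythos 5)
Your proof is correct, but it takes a genuinely different route from the paper's. The paper attacks $z=[[T_{k,-i}(-1),T_{ik}(a)],T_{-i,i}(b)]$ head on: it writes $[T_{k,-i}(-1),T_{ik}(a)]={}^{T_{k,-i}(-1)}T_{ik}(a)\cdot T_{ik}(-a)$, splits $z$ by multiplicativity in the first argument, discards the piece $[T_{ik}(-a),w]\in N$, pushes the conjugation to the second argument via (C5), expands ${}^{T_{k,-i}(1)}w=[T_{k,-i}(1),w]\cdot w$ by (R6), and then uses multiplicativity in the second argument to peel off the long-root pieces (all in $N$) leaving exactly $[T_{ik}(a),T_{ki}(b)]$. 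You instead package the same underlying manipulations via the Hall--Witt identity (C3): express the left-hand side as the triple commutator $[[u,v],w]$, show the second Hall--Witt factor lies in $N$, the first is congruent to $[[u,v],w]^{-1}$, and the third reduces to the short-root elementary commutator. Both approaches lean on the same infrastructure --- (R6), Lemmas~\ref{Op-1} and \ref{Op-2}, Theorem~\ref{symb} --- and morally amount to the same bookkeeping. What Hall--Witt buys you is a cleaner logical skeleton at the cost of having to inspect \emph{all three} factors instead of only the pieces produced along the way; the paper's direct expansion is more economical in that it never generates the second Hall--Witt term at all.

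Two small remarks on the bookkeeping, which you correctly anticipate as the delicate part. First, the identity $[u^{-1},v]=[u,v]^{-1}$ holds \emph{exactly} here (not only modulo $N$), because (R5) is a quadratic Chevalley commutator: both sides equal $T_{i,-i}(-a+\lambda^{-\ep(i)}\bar a)$. Making this explicit streamlines the treatment of the first Hall--Witt factor --- you can then apply Lemma~\ref{Op-2} directly to the long-root elementary commutator $[[u,v]^{-1},w]=Y_{i,-i}(-(a-\lambda^{\ep(-i)}\bar a),b)$ without ever computing ${}^uw$. Second, your stated $\lambda$-exponents in the (R6) expansions of $[w^{-1},u]$ and $[v^{-1},w]$ are off (for example, the correct exponent in the long-root piece of $[T_{-i,i}(-b),T_{ik}(a)]$ is $\lambda^{(\ep(k)+\ep(i))/2}$, not $\lambda^{(\ep(k)-\ep(i))/2}$), and the hedge ``up to inversion'' in the third factor resolves to the non-inverted $[T_{ik}(a),T_{ki}(b)]$ once Theorem~\ref{symb} is applied to $Y_{ik}(-b,a)$. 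Neither slip damages the argument, since the only thing used is that these factors land in the right form ideal, but a careful write-up should settle the exponents as the paper does.
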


\begin{proof}
Pick an index $k\ne \pm i$, and rewrite the elementary commutator 
$z=\Big[T_{i,-i}\big(a-\lambda^{\ep(-i)} \bar a \big),T_{-i,i}\big(b\big)\Big]$ on the left hand side as
$$ z=\Big[[T_{k,-i}(-1),T_{i,k}(a)], T_{-i,i}\big(b\big)\Big]=
\Big[ {}^{T_{k,-i}(-1)}T_{i,k}(a)\cdot T_{i,k}(-a), 
T_{-i,i}\big(b\big)\Big]. $$
\noindent
Using multiplicativity of the commutator w.r.t the first argument, 
we see
$$
z={}^{T_{k,-i}(-1)T_{i,k}(a)T_{k,-i}(1)}[T_{i,k}(-a),T_{-i,i}\big(b\big)]\cdot \Big[ {}^{T_{k,-i}(-1)}T_{i,k}(a), T_{-i,i}\big(b\big)\Big]. $$
\noindent
The first factor belongs to $\EU(2n,(\FidealI)\circ(\FidealJ))$, so we leave it out. Thus, $z$ is congruent modulo this subgroup to
\begin{multline*}
\Big[ {}^{T_{k,-i}(-1)}T_{i,k}(a), T_{-i,i}\big(b\big)\Big]= 
{}^{T_{k,-i}(-1)}\Big[ T_{i,k}(a), {}^{T_{k,-i}(1)}T_{-i,i}\big(b\big)\Big]=\\
={}^{T_{k,-i}(-1)}\Big[ T_{i,k}(a), [{T_{k,-i}(1)},T_{-i,i}\big(b\big)]T_{-i,i}\big(b\big)\Big]=\\
{}^{T_{k,-i}(-1)}\Big[ T_{i,k}(a), T_{k,i}\big(b\big)
T_{k,-k}\big(\lambda^{(\ep(-i)-\ep(k))/2}(b)\big)T_{-i,i}\big(b\big)\Big].
\end{multline*} 
\noindent
Expanding this last commutator w.r.t the second argument, we see
that the second and the third factors belong to $\EU(2n,(\FidealI)\circ(\FidealJ))$, so that we can leave them out. Now we have
$$ z\equiv{}^{T_{k,-i}(-1)}\Big[ T_{i,k}(a), T_{k,i}\big(b)\Big]
\pamod{\EU(2n,(\FidealI)\circ(\FidealJ))}, $$
\noindent
as claimed. 
\end{proof}

\begin{Cor}
In conditions of Lemma~$\ref{long-short}$ further assume that $b=b'-\lambda^{\ep(i)}\overline{b'}$ for some $b'\in J$, then 
$$ \Big[T_{i,-i}\big(a-\lambda^{\ep(-i)} \bar a \big),T_{-i,i}\big(b-\lambda^{\ep(i)} \bar b\big)\Big] \equiv \big[ T_{i,k}(a), T_{k,i}(b')\big]\cdot \big[T_{i,k}(a),T_{k,i}(-\lambda^{\ep(i)} \overline{b'})\big]
$$
\noindent
modulo $\EU(2n,(\FidealI)\circ(\FidealJ))$.
\end{Cor}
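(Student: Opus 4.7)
The plan is to derive this corollary from Lemma~\ref{long-short} by one extra step of additivity in the second argument, keeping everything modulo $\EU(2n,(\FidealI)\circ(\FidealJ))$. The strategy has three clean moves, all formal once the correct bookkeeping is in place.

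First, I would apply Lemma~\ref{long-short} to the left-hand side, to convert the long-root commutator into a short-root commutator with the same second argument. This replaces
$[T_{i,-i}(a-\lambda^{\ep(-i)}\bar a),T_{-i,i}(\cdot)]$ by $[T_{i,k}(a),T_{k,i}(\cdot)]$ modulo $\EU(2n,(\FidealI)\circ(\FidealJ))$. Applicability here rests on verifying that the second entry $b'-\lambda^{\ep(i)}\overline{b'}$ sits in the shifted relative form parameter $\lambda^{(\ep(i)-1)/2}\Delta$, so that it is a valid argument of the long-root element of level $(\FidealJ)$ appearing in the lemma.

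Second, with $b=b'-\lambda^{\ep(i)}\overline{b'}$, I would use the Steinberg additivity (R2), factoring
\[T_{k,i}\big(b'-\lambda^{\ep(i)}\overline{b'}\big)=T_{k,i}(b')\cdot T_{k,i}\big(-\lambda^{\ep(i)}\overline{b'}\big),\]
and then apply the commutator identity (C1) in the second argument to obtain
\[\Big[T_{i,k}(a),T_{k,i}(b')\,T_{k,i}(-\lambda^{\ep(i)}\overline{b'})\Big]=\big[T_{i,k}(a),T_{k,i}(b')\big]\cdot{}^{T_{k,i}(b')}\big[T_{i,k}(a),T_{k,i}(-\lambda^{\ep(i)}\overline{b'})\big].\]
Finally, Lemma~\ref{Op-1} (equivalently, the symbol properties in Theorem~\ref{symb}) says that the inner elementary commutator is fixed by conjugation by $T_{k,i}(b')$ modulo $\EU(2n,(\FidealI)\circ(\FidealJ))$, so the conjugation may simply be erased, giving exactly the product on the right-hand side of the corollary.

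The main obstacle is nothing more than the form-parameter bookkeeping at the first step: one must check, in the two cases $\ep(i)=\pm 1$, that $b'-\lambda^{\ep(i)}\overline{b'}$ (and the related element $b-\lambda^{\ep(i)}\bar b$) belongs to $\lambda^{(\ep(i)-1)/2}\Delta$. This is a direct computation from the identities $\bar\lambda=\lambda^{-1}$ and $\bar d=-\bar\lambda d$ available for elements $d$ of a form parameter, but it is the one place where a miscounted power of $\lambda$ could derail the argument. After that, everything is purely formal: (R2), (C1), and the centrality already proven in \S5.
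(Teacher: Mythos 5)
Your proposal is correct and follows essentially the same route as the paper: apply Lemma~\ref{long-short}, split $T_{k,i}(b'-\lambda^{\ep(i)}\overline{b'})$ by (R2), expand via (C1), and erase the resulting conjugation by $T_{k,i}(b')$ using Lemma~\ref{Op-1}. The only cosmetic difference is that the paper keeps the outer conjugation ${}^{T_{k,-i}(-1)}$ from the lemma's proof and strips all conjugations at the very end, whereas you invoke the lemma's already-stripped statement first; your remark on checking that $b'-\lambda^{\ep(i)}\overline{b'}$ lies in $\lambda^{(\ep(i)-1)/2}\Delta$ is exactly the right point of care.
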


\begin{proof}
Keep the notation from the proof of Lemma~\ref{long-short}.
Under this additional assumption one has
$$ z\equiv {}^{T_{k,-i}(-1)}\Big[ T_{i,k}(a), T_{k,i}(b')T_{k,i}(-\lambda^{\ep(i)} \overline{b'})\Big]\pamod{\EU(2n,(\FidealI)\circ(\FidealJ))}. $$
\noindent
Expanding the commutator w.r.t the second argument again, 
we see that 
\begin{multline*}
{}^{T_{k,-i}(-1)}\Big[ T_{i,k}(a), T_{k,i}(b')T_{k,i}(-\lambda^{\ep(i)} \overline{b'})\Big]=\\
{}^{T_{k,-i}(-1)}\Big(\big[ T_{i,k}(a), T_{k,i}(b')\big]\cdot {}^{ T_{k,i}(b')}\big[ T_{i,k}(a),T_{k,i}(-\lambda^{\ep(i)} \overline{b'})\big]\Big).
\end{multline*}
\noindent
Applying Lemma~\ref{Op-1}, we get 
$$
z\equiv \big[ T_{i,k}(a), T_{k,i}(b')\big]\cdot \big[T_{i,k}(a),T_{k,i}(-\lambda^{\ep(i)} \overline{b'})\big]
\pamod{\EU(2n,(\FidealI)\circ(\FidealJ))}, $$
\noindent
as claimed. 
\end{proof}

\begin{Cor}
If $I=\Gamma$ or $J=\Delta$ then for the second type of 
generators in Theorem~$\ref{generators}$ it suffices to take one pair $(h,-h)$.
\end{Cor}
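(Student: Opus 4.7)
My plan is to use Lemma~\ref{long-short} to trade the short-root elementary commutator for a long-root one, modulo $\EU(2n,(\FidealI)\circ(\FidealJ))$, and then appeal to the reduction results already established. Because $Y_{ij}(a,b)^{-1}=Y_{ji}(b,a)$, I may assume without loss of generality that $J=\Delta$; the case $I=\Gamma$ is symmetric. By Theorem~\ref{generators}, the commutator subgroup $[\EU(2n,\FidealI),\EU(2n,\FidealJ)]$ is generated by the first-type elementary conjugates $Z_{ij}(ab,c)$ and $Z_{ij}(ba,c)$ together with one short-root commutator $Y_{hk}$ and one long-root commutator $Y_{h,-h}$. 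The goal is to show that the short-root generator is already contained in the subgroup $H$ generated by the first-type $Z$'s and the long-root commutators $Y_{h,-h}(*,*)$.

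By Theorem~\ref{The:Op-2.1} (first bullet) with $c_1=c_2=1$, every short-root elementary commutator $Y_{ij}(a,b)$ with $a\in I$, $b\in J$, $i\neq\pm j$, is congruent modulo $\EU(2n,(\FidealI)\circ(\FidealJ))$ to $Y_{i_0,k_0}(a,b)$ at any chosen position. Pick $i_0>0$ and $k_0\neq\pm i_0$, so that $\ep(i_0)=1$ and $\lambda^{(\ep(i_0)-1)/2}\Delta=\Delta=J$. Then every $b\in J$ satisfies the hypothesis of Lemma~\ref{long-short}, which yields
\[ [T_{i_0,k_0}(a),T_{k_0,i_0}(b)]\equiv[T_{i_0,-i_0}(a-\lambda^{-1}\bar a),T_{-i_0,i_0}(b)]\pamod{\EU(2n,(\FidealI)\circ(\FidealJ))}. \]
Thus, modulo $\EU(2n,(\FidealI)\circ(\FidealJ))$, the short-root commutator becomes a long-root commutator $Y_{-i_0,i_0}(a-\lambda^{-1}\bar a,b)$; Theorem~\ref{The:Op-2.1} (second bullet) with $c=1$ further transports this to the distinguished position $(h,-h)$, still modulo the same subgroup.

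It remains to verify that $\EU(2n,(\FidealI)\circ(\FidealJ))\le H$. By Lemma~\ref{genelm}, this elementary subgroup is generated by conjugates $Z_{ij}(k,c)$ where $k$ ranges over the short- or long-root parameter of the symmetrised form ideal $(\FidealI)\circ(\FidealJ)$. For short-root positions the parameter is $IJ+JI$, and the identity $Z_{ij}(k_1+k_2,c)=Z_{ij}(k_1,c)Z_{ij}(k_2,c)$ decomposes every such generator into a product of first-type conjugates $Z_{ij}(ab,c)$ and $Z_{ij}(ba,c)$. For long-root positions, elements of $\Gamma_{\min}(IJ+JI)+{}^J\Gamma+{}^I\Delta$ are produced via the Steinberg commutator formulas (R5) and (R6): expressions of the form $ab-\lambda^{-\ep(i)}\bar b\bar a$ arise as $[T_{i,m}(a),T_{m,-i}(b)]$, while $b\gamma\bar b$ and $a\delta\bar a$ arise from (R6). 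Expanding these commutators produces first-type $Z$'s plus long-root elementary commutators, and any short-root elementary commutator that appears in the expansion is reabsorbed by the previous step.

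The main obstacle is this final reduction. Unlike the short-root ideal $IJ+JI$, the long-root parameter $\Gamma_{\min}(IJ+JI)+{}^J\Gamma+{}^I\Delta$ mixes bar-conjugation with the products $ab,ba$ and with elements of $\Gamma$ and $\Delta$, so its decomposition in terms of first-type generators is mildly bootstrap-like: a short-root $Y$ produced during a Steinberg expansion must itself be reabsorbed using the previous paragraph before the argument closes. Stratifying by the length of the Steinberg word makes this rigorous, but the bookkeeping is the technical bottleneck of the proof.
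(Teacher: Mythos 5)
Your main line of attack is the intended one: since $J=\Delta$ (or, symmetrically, $I=\Gamma$), the hypothesis $b\in\lambda^{(\ep(i)-1)/2}\Delta$ of Lemma~\ref{long-short} holds automatically for every $b\in J$ once one takes $i>0$, so the short-root elementary commutator $\big[T_{i_0,k_0}(a),T_{k_0,i_0}(b)\big]$ is congruent, modulo $\EU(2n,(\FidealI)\circ(\FidealJ))$, to the long-root commutator $\big[T_{i_0,-i_0}(a-\lambda^{\ep(-i_0)}\bar a),T_{-i_0,i_0}(b)\big]$, and the latter is transported to the distinguished long-root position by Theorem~\ref{The:Op-2.1}. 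The symmetry reduction via $Y_{ij}(a,b)^{-1}=Y_{ji}(b,a)$ is also correct. This is exactly why the paper presents the corollary without proof.

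Where you go astray is the final two paragraphs. You set out to re-prove that $\EU(2n,(\FidealI)\circ(\FidealJ))$ is contained in the subgroup $H$ generated by the first-type conjugates $Z_{ij}(ab,c)$, $Z_{ij}(ba,c)$ and the long-root $Y$'s, by expanding Steinberg words and worrying about where ${}^J\Gamma$, ${}^I\Delta$, and $\Gamma_{\min}(IJ+JI)$ come from — and you explicitly leave the long-root case as a ``technical bottleneck''. But this is not a new obligation imposed by the corollary: it is precisely the same absorption of $\EU(2n,(\FidealI)\circ(\FidealJ))$ into the $Z$-part of the generating set that is already used (and must already hold) in the proof of the ``Moreover'' clause of Theorem~\ref{generators}, where Lemmas~\ref{Op-2.1} and~\ref{Op-2.2} reduce $Y$'s at arbitrary positions to $Y$'s at two distinguished positions, again only modulo $\EU(2n,(\FidealI)\circ(\FidealJ))$. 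The corollary does not sharpen or re-open that step; it only replaces ``one short and one long position'' by ``one long position'', with the identical modulus. So the correct thing to do is to cite that fact (it is part of the established structure behind Theorem~\ref{generators}, via Lemma~\ref{genelm} applied to $(\FidealI)\circ(\FidealJ)$), not to attempt an independent re-derivation whose long-root case you then cannot close. In short: the idea is right and matches the paper, but the place you declare a bottleneck is exactly the place where you should instead be citing what is already available, and as written your proposal does not actually finish.
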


\begin{Cor}
If $\Gamma=I\cap\Lambda_{\min}$ or $\Delta=J\cap \Lambda_{min}$ then 
for the second type of generators in Theorem~$\ref{generators}$ it suffices to 
take one pair $(h,k)$, $h\neq\pm k$.
\end{Cor}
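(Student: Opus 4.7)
By (C6), the roles of $(I,\Gamma)$ and $(J,\Delta)$ in the commutator subgroup $[\EU(2n,\FidealI),\EU(2n,\FidealJ)]$ are symmetric, so without loss of generality we may assume $\Gamma = I\cap\Lambda_{\min}$. Theorem~\ref{generators} already cuts the generating set down to the first type together with one short-root pair $Y_{h,k}(a,b)$ and one long-root pair $Y_{h,-h}(a,b)$; the plan is to absorb every long-root generator into the short-root ones, modulo $\EU(2n,(\FidealI)\circ(\FidealJ))$. Since Lemma~5 gives the inclusions $\EU(2n,(\FidealI)\circ(\FidealJ))\le [\FU(2n,\FidealI),\FU(2n,\FidealJ)]\le [\EU(2n,\FidealI),\EU(2n,\FidealJ)]$, the discrepancy absorbed by such a congruence is itself a product of first-type generators and can be ignored.

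The key tool will be Lemma~\ref{long-short}, which states
$$ \big[T_{i,-i}(a-\lambda^{\ep(-i)}\bar a),T_{-i,i}(b)\big]\equiv \big[T_{i,k}(a),T_{k,i}(b)\big]\pamod{\EU(2n,(\FidealI)\circ(\FidealJ))} $$
for any $a\in I$, $b\in\lambda^{(\ep(i)-1)/2}\Delta$ and any auxiliary $k\neq\pm i$. Combined with the additivity in the first argument supplied by Theorem~\ref{symb}, this reduces the task to showing that every admissible first argument $\alpha\in\lambda^{-(\ep(i)+1)/2}\Gamma$ of a long-root generator $Y_{i,-i}(\alpha,\beta)$ can be decomposed as $\alpha=\sum_j(a_j-\lambda^{\ep(-i)}\overline{a_j})$ with $a_j\in I$. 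Once such a decomposition is in hand, applying Lemma~\ref{long-short} term by term rewrites $Y_{i,-i}(\alpha,\beta)$ as a product of short-root commutators $Y_{i,k}(a_j,\beta)$ modulo $\EU(2n,(\FidealI)\circ(\FidealJ))$; a final application of Lemma~\ref{Op-2.1} transports each of them to the single fixed pair $(h,k)$.

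This last decomposition step is precisely what the hypothesis $\Gamma=I\cap\Lambda_{\min}$ is designed to deliver: every element of $\Gamma$ has by definition the shape $c-\lambda\bar c$ for some $c\in A$, and the additional constraint $\Gamma\subseteq I$ together with the closure axiom $c\Gamma\bar c\subseteq\Gamma$ should permit one to choose representatives $c$ inside $I$, possibly after correction by trace terms $axx\bar a$ ($a\in I$, $x\in\Lambda$) that are themselves absorbable by the symbol identities of Theorem~\ref{symb}. The main obstacle will be exactly this internal representability: verifying that the a priori gap between ``some $c\in A$'' and ``some $a\in I$'' really collapses modulo $\EU(2n,(\FidealI)\circ(\FidealJ))$. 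Granted this technicality, the corollary follows at once, and by the opening symmetry remark the case $\Delta=J\cap\Lambda_{\min}$ is handled identically.
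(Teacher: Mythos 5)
Your overall plan --- split the long-root commutator $Y_{i,-i}(\alpha,\beta)$ additively via Theorem~\ref{symb}, feed the pieces of shape $a-\lambda^{\ep(-i)}\bar a$ with $a\in I$ into Lemma~\ref{long-short}, and finally roll all short-root commutators onto a single pair $(h,k)$ via Lemma~\ref{Op-2.1} --- is the right one, and the opening symmetry remark is fine. But you have deferred precisely the substantive verification, and the fallback you offer for it does not work. You concede that the ``internal representability'' of $\alpha$ in terms of elements of $I$ is an unresolved ``technicality''. In fact, reading the hypothesis the way the preamble to \S~8 suggests, namely $\Gamma$ as small as possible, $\Gamma=\Gamma_{\min}(I)$, the decomposition is automatic \emph{by definition}: every $\alpha\in\Gamma$ is a finite sum of $(a-\lambda\bar a)$-terms with $a\in I$ and of $(x\omega\bar x)$-terms with $x\in I$, $\omega\in\Lambda$. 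So the representability is not the obstacle; the obstacle is the disposal of the $(x\omega\bar x)$-terms, and that is where your argument fails.

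For those terms you assert that they are ``absorbable by the symbol identities of Theorem~\ref{symb}.'' They are not. The killing identity there is $Y_{i,-i}(\overline{b_1}a b_1,b_2)\equiv e$ under the hypothesis that the outer sandwiching element $b_1$ lies in $J$; it holds because $\overline{b_1}a b_1\in{}^J\Gamma\subseteq\Gamma\circ\Delta$, so the long-root transvection already lies in $\FU(2n,(\FidealI)\circ(\FidealJ))$. In your situation the outer element $x$ lies in $I$, so $x\omega\bar x$ lies in $\Gamma$ but in general not in $\Gamma\circ\Delta$, and the cited identity simply does not apply. The ingredient you are missing is a variant of the rolling computation of Lemma~\ref{Op-2.2}: the Steinberg manipulations there never use $a\in\Gamma$, only $a\in\Lambda$, so re-running them with $c=x\in I$, $a=\omega\in\Lambda$, $b=\beta\in\lambda^{(\ep(i)-1)/2}\Delta$ gives
$$ Y_{i,-i}(x\omega\bar x,\beta)\equiv\big[T_{k,-k}(\lambda^{(\ep(i)-\ep(k))/2}\omega),\,T_{-k,k}(-\lambda^{(\ep(k)-\ep(i))/2}\bar x\beta x)\big]\pamod{\EU(2n,(\FidealI)\circ(\FidealJ))}, $$
where now the first factor is an \emph{absolute} elementary long-root element and the second lies in $\FU(2n,(\FidealI)\circ(\FidealJ))$, since $\bar x\beta x\in{}^I\Delta\subseteq\Gamma\circ\Delta$. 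Normality of $\EU(2n,(\FidealI)\circ(\FidealJ))$ in $\EU(2n,\FormR)$ then makes the right-hand side trivial modulo $\EU(2n,(\FidealI)\circ(\FidealJ))$, which is what kills the $(x\omega\bar x)$-terms. Without this extra step your proof does not eliminate the long-root generators, and the corollary remains unproved.
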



\section{Triple and quadruple commutators}

Actually Theorem~\ref{T:7} easily follows by induction on $m$ from
the following two special cases, triple commutators, and quadruple commutators.

\begin{Lem}\label{triple}
Let $(\FormR$) be any associative form ring with $1$, let $n\ge 3$, 
and let 
$(\FidealI)$, $(\FidealJ)$, $(\FidealK)$,  be form ideals of $(\FormR)$.  Then
\begin{multline*}
\big[\big[\EU(2n,\FidealI),\EU(2n,\FidealJ)\big],\EU(2n,\FidealK)\big]=\\
\big[\EU(2n,(\FidealI)\circ (\FidealJ)),\EU(2n,\FidealK)\big]. 
\end{multline*}
\end{Lem}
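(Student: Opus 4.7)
The plan is to establish the two inclusions of the claim separately. The containment $\supseteq$ is immediate from the chain of inclusions in the last lemma of \S\ref{sec4}: since $\EU(2n,(\FidealI)\circ(\FidealJ))\le[\EU(2n,\FidealI),\EU(2n,\FidealJ)]$, commuting both sides with $\EU(2n,\FidealK)$ preserves this inclusion.

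For the harder containment $\subseteq$, the plan is to invoke Theorem~\ref{generators}. That theorem furnishes a two-tiered generating set for $[\EU(2n,\FidealI),\EU(2n,\FidealJ)]$: Z-type generators, which already sit inside $\EU(2n,(\FidealI)\circ(\FidealJ))$, and Y-type generators $Y_{hk}(a,b)$. Writing an arbitrary $g$ as a product of such generators and expanding $[g,x]$ by means of identity (C$2^+$) reduces the task to checking $[g_i,x]\in[\EU(2n,(\FidealI)\circ(\FidealJ)),\EU(2n,\FidealK)]$ on individual generators; since this target is normal in $\EU(2n,\FormR)$ as the mixed commutator of two normal subgroups, the conjugates introduced by the expansion are absorbed. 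The Z-type case is immediate, so the whole effort goes into controlling $[Y_{hk}(a,b),x]$.

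A preliminary step will further reduce $x$ to an elementary generator $T_{kl}(c)$ with $c\in(\FidealK)$. Using (C1) together with (C4), one can replace $[Y_{hk}(a,b),{}^wT_{kl}(c)]$ by ${}^w[{}^{w^{-1}}Y_{hk}(a,b),T_{kl}(c)]$ for $w\in\EU(2n,\FormR)$, and Lemmas~\ref{Op-1} and~\ref{Op-2} assure us that ${}^{w^{-1}}Y_{hk}(a,b)\equiv Y_{hk}(a,b)\pamod{\EU(2n,(\FidealI)\circ(\FidealJ))}$. The resulting error contributes only a commutator $[n,T_{kl}(c)]$ with $n\in\EU(2n,(\FidealI)\circ(\FidealJ))$, which visibly lies in the target.

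The heart of the proof will then be the direct Steinberg-level computation of $[Y_{hk}(a,b),T_{kl}(c)]$ for $c\in(\FidealK)$. The plan is to repeat the case analyses of Lemmas~\ref{Op-1} and~\ref{Op-2}, but now to track which triple symmetrised product ideal each matrix entry belongs to. Typical outputs such as $T_{ih}(-abc-ababc)T_{jh}(-babc)$ have entries in iterated products $abc$, $ababc$, $babc$, and so on; since $IJ$ is a two-sided ideal one has $JIJ\subseteq IJ$ (and similarly for all permutations of $I,J,K$), so every such entry lands in the triple symmetrised product $(\FidealI)\circ(\FidealJ)\circ(\FidealK)$. Consequently $[Y_{hk}(a,b),T_{kl}(c)]\in\EU(2n,(\FidealI)\circ(\FidealJ)\circ(\FidealK))$, and applying the last lemma of \S\ref{sec4} once more---to the pair of form ideals $(\FidealI)\circ(\FidealJ)$ and $(\FidealK)$---delivers the final inclusion $\EU(2n,(\FidealI)\circ(\FidealJ)\circ(\FidealK))\le[\EU(2n,(\FidealI)\circ(\FidealJ)),\EU(2n,\FidealK)]$ that closes the argument. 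The main obstacle is expected to be the long root case of Lemma~\ref{Op-2}, where the $\lambda$-twists and form parameter corrections must be bookkept simultaneously with the triple ideal memberships; this should be tedious but purely mechanical.
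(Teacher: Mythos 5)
Your proposal is correct and follows essentially the same route as the paper. Both proofs reduce, via Theorem~\ref{generators}, Lemmas~\ref{Op-1}, \ref{Op-2}, and normality of the target, to the pure commutator $[Y_{ij}(a,b),T_{kl}(c)]$ with $c\in(\FidealK)$, and then carry out the same case-by-case Steinberg computation (rerunning the case analyses of Lemmas~\ref{Op-1}, \ref{Op-2} while tracking membership in the triple symmetrised product) before closing the loop with the inclusion chain from \S\ref{sec4}.
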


\begin{proof}
First of all, observe that the generators of the first type in Theorem~\ref{generators} belong to
$\EU(2n,(\FidealI)\circ (\FidealJ))$. Thus, forming their commutators with $T_{h,k}(c)\in \EU(2n,\FidealK)$ will bring us inside 
$[\EU(2n,(\FidealI)\circ (\FidealJ)),\EU(2n,\FidealK)]$. 
\par
Next, let $Y_{i,j}(a,b)=[T_{i,j}(a),T_{j,i}(b)]$ a typical generator of the second type of the commutator subgroup $\big[\EU(2n,\FidealI),\EU(2n,\FidealJ)\big]$ with $T_{i,j}(a)\in \EU(2n,\FidealI)$ and $T_{j,i}(b)\in \EU(2n,\FidealJ)$.
\par
From Lemma~\ref{Op-1} and Lemma~\ref{Op-2} we know that ${}^xY_{i,j}(a,b)=Y_{i,j}(a,b)z$, for some 
$z\in\EU(2n,(\FidealI)\circ (\FidealJ))$, and thus for any $T_{h,k}(c)\in \EU(2n,\FidealK)$,
$$ \big[{}^xY_{i,j}(a,b),T_{k,l}(c)\big]=
\big[Y_{i,j}(a,b)z,T_{k,l}(c)\big]={}^{Y_{ij}(a,b)}[z,T_{k,l}(c)]\cdot
[Y_{i,j}(a,b),T_{k,l}(c)]. $$ 
\noindent
The first of these commutators also belongs to
$$ \big[\EU(2n,(\FidealI)\circ (\FidealJ)),\EU(2n,\FidealK)\big], $$
\noindent 
and stays there after elementary conjugations. Let us concentrate 
at the second one.
\par\smallskip\noindent
{\bf Case 1.} When $i\ne \pm j$ the same 
analysis as in the proof of Lemma~\ref{Op-1}, shows that:
\par\smallskip
$\bullet$ If $k\ne -l$ and  $k,l\neq\pm  i,\pm j$, then $T_{k,l}(c)$ commutes with 
$Y_{i,j}(a,b)$.
\par\smallskip
$\bullet$ For any $h\neq\pm  i,\pm j$ the formulas 
for $Y_{ij}(a,b)$ and $Y_{ij}(a,b)^{-1}$ given in the proof of 
Lemma~\ref{Op-1} immediately imply that
\begin{alignat*}{1}
[z,T_{ih}(c)]&=T_{jh}(babc)T_{ih}(abc+ababc),\\
\noalign{\vskip 3truept}
[z,T_{jh}(c)]&=T_{jh}(-bac)T_{ih}(-abac),\\
\noalign{\vskip 3truept}
[z,T_{hi}(c)]&=T_{jh}(caba)T_{ih}(-cab),\\
\noalign{\vskip 3truept}
[z,T_{hj}(c)]&=T_{jh}(cba+cbaba)T_{ih}(-cbab),
\end{alignat*}
\noindent
and similarly
\begin{alignat*}{1}
[z,T_{-i,h}(c)]&=
[z,T_{-h,i}(-\lambda^{((\epsilon(h)+\epsilon(i))/2}c)]=\\
&T_{j,-h}(\lambda^{((\epsilon(h)+\epsilon(i))/2}caba)T_{i,-h}(\lambda^{((\epsilon(h)+\epsilon(i))/2}cab),\\
\noalign{\vskip 3truept}
[z,T_{-j,h}(c)]& =[z,T_{-h,j}(-\lambda^{((\epsilon(h)+\epsilon(j))/2}c)]=\\
&T_{j,-h}(\lambda^{((\epsilon(h)+\epsilon(j))/2}cba+\lambda^{((\epsilon(h)+\epsilon(j))/2}cbaba) T_{i,-h}(\lambda^{((\epsilon(h)+\epsilon(j))/2}cbab),\\
\noalign{\vskip 3truept}
[z,T_{h,-i}(c)]& =
[z,T_{i,-h}(-\lambda^{(-(\epsilon(i)-\epsilon(h))/2}c)]= \\
&=T_{j,-h}(\lambda^{(-(\epsilon(i)-\epsilon(h))/2}bac)T_{i,-h}(\lambda^{(-(\epsilon(i)-\epsilon(h))/2}abac),\\
\noalign{\vskip 3truept}
[z,T_{h,-j}(c)]& =
[z,T_{j,-h}(-\lambda^{(-(\epsilon(j)-\epsilon(h))/2}c)] \\
&T_{j,-h}(-\lambda^{((\epsilon(j)-\epsilon(h))/2}bac)T_{i,-h}(\lambda^{(-(\epsilon(j)-\epsilon(h))/2}abac)
\end{alignat*}
\par\noindent
All factors on the right hand side belong already to 
$\EU\big(2n,((\FidealI)\circ (\FidealJ))\circ (\FidealK)\big)$.
\par
If  $(k,l)=(\pm i, \pm j)$ or $(\pm j,\pm i)$, then we take an index  $h\ne \pm i, \pm j$ and rewrite $T_{kl}(c)$ as $[T_{k,h}(c),T_{h,l}(1)]$ and apply the previous items to get it belongs to $\big[\EU(2n,(\FidealI)\circ (\FidealJ)),\EU(2n,\FidealK)\big]$.
\par\smallskip

On the other hand, for $k=-l$ we have:
\par\smallskip
$\bullet$ If $k\ne \pm i,\pm j$, then $T_{k,-k}(c)$ commutes 
with $z$ and can be discarded. 
\par\smallskip
$\bullet$ Otherwise, we have
\begin{align*}
&[z,T_{i,-i}(c)]=T_{i,-j}(-\lambda^{((\epsilon(i)-\epsilon(j))/2}(1+ab+abab)c\overline{(bab)})T_{j,-j}(\lambda^{((\epsilon(i)-\epsilon(j))/2}babc\overline{bab})\\
&\hskip 2.5truein T_{i,-i}(-c+(1+ab+abab)c\overline{(1+ab+abab)}),\\
\noalign{\vskip 3truept}
&[z,T_{j,-j}(c)]=T_{i,-j}(abac(1-\overline{ba}))T_{i,-i}(-\lambda^{((\epsilon(j)-\epsilon(i))/2}abac\overline{aba})\cdot\\
&\hskip 3.5truein T_{j,-j}(-c+(1-ba)c\overline{(1-ba)}),\\
\noalign{\vskip 3truept}
&[z,T_{-i,i}(c)]=[[T_{ij}(a),T_{ji}(b)],T_{-i,i}(c)]=\\
&\hskip 1.5truein [[T_{-j,-i}(-\lambda^{((\epsilon(j)-\epsilon(i))/2}a),T_{-i,-j}(\lambda^{((\epsilon(i)-\epsilon(j))/2}b)],T_{-i,i}(c)],\\
\noalign{\vskip 3truept}
&[z,T_{-j,j}(c)]=[[T_{ij}(a),T_{ji}(b)],T_{-j,j}(c)]=\\ 
&\hskip 1.5truein [[T_{-j,-i}(-\lambda^{((\epsilon(j)-\epsilon(i))/2}a),T_{-i,-j}(\lambda^{((\epsilon(i)-\epsilon(j))/2}b)],T_{-j,-j}(c)].
\end{align*}
The two last cases reduce to the first two. In each case 
the resulting expressions belong to 
$\EU\big(2n,((\FidealI)\circ (\FidealJ))\circ (\FidealK)\big)$.
  

\par\smallskip\noindent
{\bf Case 2.} When $i=-j$ the same analysis as in the proof of Lemma~\ref{Op-2}, shows that:
\par\smallskip
$\bullet$ If  $(k,l)=(-i,i)$, then 
$$ [z,T_{-i,i}(c)]=[ [T_{i,-i}(a),T_{-i,i}(b)],T_{-i,i}(c)]
=[Z_{-i,i}(b,a),T_{-i,i}(c)]. $$
\noindent
Now, the same computation as  in Lemma~\ref{form-2} 
shows that 
$$ [z,T_{-i,i}(c)] \in \EU(2n,(\FidealI)\circ(\FidealJ)). $$
\par\smallskip
$\bullet$ If  $(k,l)=(i,-i)$, then
\begin{multline*}
[z,T_{i,-i}(c)]=[ [T_{i,-i}(a),T_{-i,i}(b)],T_{i,-i}(c)]=
[  [T_{-i,i}(b),T_{i,-i}(a)]^{-1},T_{i,-i}(c)]\\
=[T_{-i,i}(b),T_{i,-i}(a)]\cdot
[T_{i,-i}(c),[T_{-i,i}(b),T_{i,-i}(a)] ]\cdot 
[T_{-i,i}(b),T_{i,-i}(a)]^{-1}.
\end{multline*}
\noindent
By the previous subcase,
$$ [T_{i,-i}(c),[T_{-i,i}(b),T_{i,-i}(a)] ]\in \EU(2n,(\FidealI)\circ(\FidealJ)). $$ 
\noindent
But then its conjugates also stay therein.

  
\par\smallskip
$\bullet$ If $k=i$ and $j\ne \pm k$, then 
\begin{multline*}
 [z,T_{i,j}(c)]=[ [T_{i,-i}(a),T_{-i,i}(b)],T_{i,j}(c)]=\\ 
T_{-j,j}(\lambda^{((\epsilon(j)-\epsilon(i))/2}\overline{ c}bab\overline{c} \lambda^{\epsilon(j)}(\overline{c}bababc+\overline{c}babababc))\cdot T_{-i,j}(babc)T_{i,j}((ab+abab)c) 
\end{multline*}
\noindent
Since $a\in \lambda^{-(\epsilon(i)+1)/2}\Gamma$ and $b\in\lambda^{(\epsilon(i)-1)/2}\Delta$, it follows that the right 
hand side belongs to $\EU(2n,(\FidealI)\circ(\FidealJ))$.
\par\smallskip
$\bullet$ If $k=-i$ and $j\ne \pm k$, then 
\begin{multline*}
[z,T_{-i,j}(c)]=[ [T_{i,-i}(a),T_{-i,i}(b)],T_{-i,j}(c)]=\\
[T_{-i,i}(b),T_{i,-i}(a)]^{-1}\cdot 
[T_{-i,j}(c), [T_{-i,i}(b),T_{i,-i}(a)]]\cdot [T_{-i,i}(b),T_{i,-i}(a)].
\end{multline*}
\noindent
By the previous subcase, 
$$ [T_{-i,j}(c), [T_{-i,i}(b),T_{i,-i}(a)]]\in \EU(2n,(\FidealI)\circ(\FidealJ)). $$
\noindent
But then its conjugates also stay therein.
\par\smallskip
$\bullet$ Finally, using relation $(R1)$ the subcase $l=\pm i$ 
and $k\ne\pm i$ is readily reduced to the subcases, where 
$k=\pm i$.
\end{proof}

Now, for $n\ge 4$ the only new case of quadruple commutators
is considered in the following lemma, which immediately follows
from Lemma~\ref{triple} and Theorem~\ref{equality}. Of course, for the outstanding
case $n=3$ it requires a separate proof. All our assaults on 
this remaining case were crippled by forbidding calculations.

\begin{Lem}\label{quadruple}
Let $(\FormR$) be any associative form ring with $1$ and let 
$(\FidealI)$, $(\FidealJ)$, $(\FidealK)$, $(\FidealL)$  be form ideals of $(\FormR)$. If either $n\ge 4$ or there exists an ideal equals its corresponding relative form parameter and $n\ge3$, then
\begin{multline*}
 \Big[\big[\EU(2n,\FidealI),\EU(2n,\FidealJ)\big],\big[\EU(2n,\FidealK),\EU(2n,\FidealL)\big]\Big]=\\
\big[\EU(2n,(\FidealI)\circ (\FidealJ)),\EU(2n,(\FidealK)\circ (\FidealL))\big]. 
\end{multline*}
\end{Lem}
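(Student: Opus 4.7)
My plan is to prove the two inclusions separately. Denote the left-hand side by $P$ and the right-hand side by $Q$. The inclusion $Q\subseteq P$ is immediate from the last lemma of Section~\ref{sec4}: it gives $\EU(2n,(\FidealI)\circ(\FidealJ))\subseteq\big[\EU(2n,\FidealI),\EU(2n,\FidealJ)\big]$ and symmetrically $\EU(2n,(\FidealK)\circ(\FidealL))\subseteq\big[\EU(2n,\FidealK),\EU(2n,\FidealL)\big]$, and taking commutators of these two inclusions yields $Q\subseteq P$.

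For the harder inclusion $P\subseteq Q$, I will pick an arbitrary $[u,v]$ with $u\in\big[\EU(2n,\FidealI),\EU(2n,\FidealJ)\big]$ and $v\in\big[\EU(2n,\FidealK),\EU(2n,\FidealL)\big]$, factor $u$ and $v$ via Theorem~\ref{generators}, and reduce to four cross-terms controlled by Lemma~\ref{triple} together with an index-disjointness trick. Since $\EU(2n,(\FidealI)\circ(\FidealJ))$ is normal in the containing commutator subgroup, Theorem~\ref{generators} lets me write $u=u_0U$ with $u_0\in\EU(2n,(\FidealI)\circ(\FidealJ))$ and $U$ a product of elementary commutators at a chosen short root position $(h_1,k_1)$ and a chosen long root position $(i_1,-i_1)$; symmetrically $v=v_0V$ with chosen positions $(h_2,k_2),(i_2,-i_2)$.

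The crucial step is to arrange $\{\pm h_1,\pm k_1,\pm i_1\}\cap\{\pm h_2,\pm k_2,\pm i_2\}=\emptyset$. For $n\ge 4$ this is easy: place the first set inside $\{\pm 1,\pm 2\}$ and the second inside $\{\pm 3,\pm 4\}$. For $n=3$ under the stated hypothesis, I will invoke the corollaries after Lemma~\ref{long-short} to drop either the short or the long elementary commutator on the side whose form parameter coincides with its ideal (or with $I\cap\Lambda_{\min}$); the resulting index count then fits disjointly into $\{\pm 1,\pm 2,\pm 3\}$. Once positions are disjoint, the Steinberg relation~(R3) forces every elementary transvection in $U$ to commute with every one in $V$, whence $[U,V]=e$.

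It then remains to expand $[u_0U,v_0V]$ via the commutator identities~(C1) and~(C2) into a product of conjugates of the four commutators $[u_0,v_0]$, $[U,v_0]$, $[u_0,V]$, $[U,V]$. The first lies in $Q$ tautologically; the second in $\big[[\EU(2n,\FidealI),\EU(2n,\FidealJ)],\EU(2n,(\FidealK)\circ(\FidealL))\big]$, which equals $Q$ by Lemma~\ref{triple}; the third is handled symmetrically; the fourth vanishes. Since $Q$ is normal in $\EU(2n,\FormR)$ (as the commutator of two normal subgroups), the intervening conjugations remain inside $Q$, so $[u,v]\in Q$. The main obstacle averted by the hypothesis is the case $n=3$ with all four form parameters genuinely intermediate: no disjoint choice of positions then exists, $[U,V]$ becomes a residual commutator of interlocked elementary commutators, and this strategy breaks — precisely the case the authors flag as unresolved.
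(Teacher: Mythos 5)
Your proposal is correct and follows essentially the same route as the paper: reduce via Lemma~\ref{triple} to the cross-commutators of elementary commutators $Y$, and use the position-flexibility coming from Lemmas~\ref{Op-2.1}, \ref{Op-2.2} and the corollaries of Lemma~\ref{long-short} to make the two sides commute. The only (cosmetic) difference is that you choose disjoint positions a priori through the second claim of Theorem~\ref{generators}, whereas the paper rolls $Y_{hk}(c,d)$ over to a disjoint position post hoc modulo $\EU(2n,(\FidealK)\circ(\FidealL))$, absorbing the correction terms by the triple commutator lemma.
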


\begin{proof}
From the previous lemma we already know that 
\begin{multline*}
\Big[\EU(2n,(\FidealI)\circ(\FidealJ)),\big[\EU(2n,\FidealK),\EU(2n,\FidealL)\big]\Big]=\\
\Big[\EU(2n,(\FidealI)\circ (\FidealJ)),\EU(2n,(\FidealK)\circ (\FidealL))\Big]
\end{multline*}
and that
\begin{multline*}
\Big[\big[\EU(2n,\FidealI),\EU(2n,\FidealJ)\big],\EU(2n,(\FidealK)\circ(\FidealL))\Big]=\\
\Big[\EU(2n,(\FidealI)\circ (\FidealJ)),\EU(2n,(\FidealK)\circ (\FidealL))\Big]. 
\end{multline*} 
 \par
Thus, it only remains to prove that 
$$  \big[Y_{ij}(a,b),Y_{hk}(c,d)\big]\in\Big[\EU(2n,(\FidealI)\circ (\FidealJ)),\EU(2n,(\FidealK)\circ (\FidealL))\Big], $$
\noindent
where  $a\in(\FidealI)$, $b\in(\FidealJ)$, $c\in(\FidealK)$ and 
$d\in(\FidealL)$. Conjugations by elements
$x\in\EU(2n,\FormR)$ do not matter, since they amount to extra
factors from the above triple commutators, which are
already accounted for.
\par
Now, for $n\ge 4$ this already finishes the proof, since in
this case we can move $Y_{hk}(c,d)$ modulo 
$\EU(2n,(\FidealK)\circ (\FidealL))$ to a position, where it commutes with 
$Y_{ij}(a,b)]$, either by Lemma~\ref{Op-2.1} when $i\ne\pm j$ and $h\ne\pm k$  or by Lemma~\ref{Op-2.2} when $i=-j$ or $h=-k$.

Suppose that there exists an ideal equals its corresponding relative form paramerter, say $I=\Gamma$. If $i\ne \pm j$  then by Lemma~\ref{long-short}, we have 
$$Y_{i,j}(a,b)\equiv Y_{i,-i}(a, b-\lambda^{\ep(i)}\bar b).$$
For $n\ge 3$, we can move $Y_{i,-i}(a, b-\lambda^{\ep(i)}\bar b)$ module  $\EU(2n,(\FidealK)\circ (\FidealL))$ to a position, where it commutes with $Y_{hk}(c,d)$ by Lemma~\ref{Op-2.1}. Otherwise, if $i=-j$ then can also move $Y_{i,-i}(a,b)$ to a position, where it commutes with $Y_{hk}(c,d)$ by Lemma~\ref{Op-2.2}. This finishes the whole proof. 
\end{proof}


\section{Elementary multiple commutator formulas}

In the current section, we show that multiple commutators of elementary subgroups can be reduced to double such commutators.

To state our main results, we have to recall some further 
pieces of notation from \cite{yoga-1,RHZZ2,yoga-2, RNZ5, RNZ1, stepanov10}.
Namely, let $H_1,\ldots,H_m\le G$ be subgroups of $G$. There are 
many ways to form a higher commutator of these
groups, depending on where we put the brackets. Thus, for three
subgroups $F,H,K\le G$ one can form two triple commutators
$[[F,H],K]$ and $[F,[H,K]]$. Usually, we write $[H_1,H_2,\ldots,H_m]$ for the {\it left-normed\/} commutator, defined inductively by
$$ [H_1,\ldots,H_{m-1},H_m]=[[H_1,\ldots,H_{m-1}],H_m]. $$
\noindent
To stress that here we consider {\it any\/} commutator of these subgroups, with an arbitrary placement of brackets, we write $\llbracket H_1,H_2,\ldots,H_m \rrbracket$. Thus, for instance, $\llbracket F,H,K\rrbracket $ 
refers to any of the two arrangements above.
\par
Actually, a specific arrangement of brackets usually does not play
major role in our results -- apart from one important 
attribute\footnote{Actually, for non-commutative rings 
symmetric product of ideals is not associative, so that 
the initial bracketing of higher commutators will be reflected also 
in the bracketing of such higher symmetric products.}. 
Namely, what will matter a lot is the position of the outermost 
pairs of inner brackets. Namely, every higher commutator subgroup
$\llbracket H_1,H_2,\ldots,H_m\rrbracket $ can be uniquely written as
$$ \llbracket H_1,H_2,\ldots,H_m\rrbracket =
[\llbracket H_1,\ldots,H_s\rrbracket ,\llbracket H_{s+1},\ldots,H_m\rrbracket ], $$
\noindent
for some $s=1,\ldots,m-1$. This $s$ will be called the cut point
of our multiple commutator. Now we are all set to finish the
proof of Theorem~7. The proof is an easy adaptation of the proof
of \cite{NZ3}, Theorem 1, but we reproduce it here for the sake of 
completeness.

\begin{proof}
Denote the commutator on the left-hand side by $H$,
$$ H=\big\llbracket \EU(2n,I_1,\Gamma_1),\EU(2n,I_2,\Gamma_2),\ldots,\EU(2n,I_m,\Gamma_m)\big\rrbracket . $$
\noindent
We argue by induction in $m$, with the cases $m\le 4$ as the
base of induction --- for the case $m=2$ there is nothing to
prove, case $m=3$ is accounted for by Lemma~\ref{triple}, and case
$m=4$ --- by Lemma~\ref{triple}, if the cut point $s\neq 2$, and by 
Lemma~\ref{quadruple} when $s=2$.
\par
Now, let $m\ge 5$ and assume that our theorem is already 
proven for all shorter commutators. Consider an arbitrary 
arrangement of brackets\/ $[\![\ldots]\!]$ with the cut point 
$s$ and let 
\begin{multline*} 
\big\llbracket \EU(2n,I_1,\Gamma_1),\EU(2n,I_2,\Gamma_2),\ldots,
\EU(2n,I_s,\Gamma_s)\big\rrbracket ,\\
\big\llbracket \EU(2n,I_{s+1},\Gamma_{s+1}),\EU(2n,I_{s+2},\Gamma_{s+2}),\ldots,\EU(2n,I_m,\Gamma_m)\big\rrbracket , 
\end{multline*} 
\noindent
be the partial commutators, the first one containing the factors 
afore the cut point, and the second one containing those after
the cut point. 
\par\smallskip
$\bullet$ When the cut point occurs at $s=1$ or at $s=m-1$, one 
of these commutators is a single elementary subgroup $\EU(2n,I_1)$ 
in the first case or $\EU(2n,I_{m-1})$ in the second one.  Then we 
can apply the induction hypothesis to another factor. For $s=1$,
denote by $t=2,\ldots,m-1$ the cut point of the second factor.
Then by induction hypothesis
\begin{multline*}
H=\bigg[\EU(2n,I_1,\Gamma_1),\Big\llbracket \EU(2n,I_2,\Gamma_2),\EU(2n,I_3,\Gamma_3),\ldots,\EU(2n,I_m,\Gamma_m)
\Big\rrbracket\bigg]=\\
\bigg[\EU(2n,I_1,\Gamma_1),
\Big[\EU(2n,(I_2,\Gamma_2)\circ\ldots\circ(I_t,\Gamma_t)),
\EU(2n,(I_{t+1},\Gamma_{t+1})\circ\ldots\circ(I_m,\Gamma_m))\Big]\bigg], 
\end{multline*}
\noindent
and we are done by Lemma~\ref{triple}. Similarly, for $s=m-1$ denote by 
$r=1,\ldots,m-1$ the cut point of the first factor. Then by 
induction hypothesis
\begin{multline*}
H=\bigg[\Big\llbracket\EU(2n,I_1,\Gamma_1),\EU(2n,I_2,\Gamma_2),\ldots,\EU(2n,I_{m-1},\Gamma_{m-1})
\Big\rrbracket,\EU(2n,I_m,\Gamma_m)\bigg]=\\
\bigg[\Big[\EU(2n,(I_1,\Gamma_1)\circ\ldots\circ(I_r,\Gamma_r)),
\EU(2n,(I_{r+1},\Gamma_{r+1})\circ\ldots\circ(I_{m-1},\Gamma_{m-1}))\Big],\qquad\\
\hskip 4truein\EU(2n,I_m,\Gamma_m)\bigg], 
\end{multline*}
\noindent
and we are again done by Lemma~\ref{triple}. 
\par\smallskip
$\bullet$ Otherwise, when $s\neq 1,m-1$, we can apply the 
induction hypothesis to both factors. Let as above $r=1,\ldots,s-1$
be the cut point of the first factor and let $t=s+1,\ldots,m-1$ be
the cut point of the second factor. Then we can apply induction
hypothesis to both factors of
\begin{multline*}
H=\bigg[\Big\llbracket\EU(2n,I_1),\EU(2n,I_2),\ldots,\EU(2n,I_s)
\Big\rrbracket,\\
\Big\llbracket\EU(2n,I_{s+1}),\EU(2n,I_{s+2}),\ldots,\EU(2n,I_m)
\Big\rrbracket\bigg]
\end{multline*}
\noindent
to conclude that
\begin{multline*}
H=\bigg[\Big[\EU(2n,I_1\circ\ldots\circ I_r),\EU(2n,I_{r+1}\circ\ldots\circ I_s)\Big],\\
\Big[\EU(2n,I_{s+1}\circ\ldots\circ I_t),\EU(2n,I_{t+1}\circ\ldots\circ I_m)
\Big]\bigg],
\end{multline*}
\noindent
and we are again done, this time by Lemma~\ref{quadruple}.
\end{proof}


\section{Further applications}

Now, we are in a position to finish the proof of Theorem~\ref{T:8}.

\begin{proof}
Since $(\FidealI)$ and $(\FidealJ)$ are comaximal, there exist $a'\in I$ 
and $b'\in J$ such that $a'+b'=1\in R$. But then  by Lemmas~\ref{Op-2.1} 
and \ref{long-short},
for $i\ne \pm j$ one has
$$ Y_{ij}(a,b)=Y_{ij}(a(a'+b'),b)\equiv Y_{ij}(aa',b)\cdot Y_{ij}(ab',b)\equiv e$$
modulo $\EU(2n,(\FidealI)\circ (\FidealJ))$.
\par
For $i=-j$, one has 
$$
  Y_{i,-i}(a,b)=Y_{i,-i}((a'+b')a\overline{(a'+b')},b)
  =Y_{i,-i}(a'a\overline{a'}+b'a \overline{a'} +a'a\overline{b'}+b'a \overline{b'},b). $$
\noindent
Applying multiplicativity of commutators to the first argument of the above commutator and then  Lemma~\ref{Op-2}, we deduce
$$
z\equiv Y_{i,-i}(a'a\overline{a'},b)Y_{i,-i}(b'a \overline{a'},b)Y_{i,-i}(a'a\overline{b'},b)Y_{i,-i}(b'a \overline{b'} ,b)\pamod{\EU(2n,(\FidealI)\circ (\FidealJ))}.
$$
By Theorem~\ref{symb}, each of above factors is trivial modulo $\EU(2n,(\FidealI)\circ (\FidealJ))$. This finishes the proof.
\end{proof}

Let us state another amusing corollary of Theorem~\ref{symb}.
For the form ideals themselves, one has an obvious inclusion 
\begin{multline*}
\Big((\FidealI)+(\FidealJ)\Big)\circ\Big((\FidealI)\cap (\FidealJ)\Big)=\\
\Big((I+J)\circ(I\cap J), \Gamma_{\min}((I+J)\circ(I\cap J))+ {}^{(\Gamma\cap\Delta)}(\Gamma+\Delta) + {}^{(\Gamma+\Delta)}(\Gamma\cap\Delta) \Big)\le \\
\Big(I\circ J, \Gamma_{\min}(I\circ J) +{}^J\Gamma +{}^I\Delta\Big)=(\FidealI)\circ(\FidealJ).
\end{multline*}
\noindent
Only very rarely this inclusion is always an equality. 

\begin{The}\label{the:p4}
For any two form ideals $(\FidealI)$ and $(\FidealJ)$ of $(\FormR)$, $n\ge 3$, one has
$$ \Big[\EU\big(2n,(\FidealI)+(\FidealJ)\big),
\EU\big(n,(\FidealI)\cap(\FidealJ)\big)\Big]\le 
\big[\EU(2n,\FidealI),\EU(2n,\FidealJ)\big]. $$
\end{The}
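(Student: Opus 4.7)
The plan is to apply Theorem~\ref{generators} to the pair of form ideals $\big((\FidealI)+(\FidealJ),\ (\FidealI)\cap(\FidealJ)\big)$, so as to list explicit generators of the left hand side, and then to express each such generator in terms of the generators of the right hand side produced by Theorem~\ref{generators} for the pair $\big((\FidealI),(\FidealJ)\big)$. The basic mechanism is the decomposition $u=u_1+u_2$ with $u_1\in I$, $u_2\in J$ of an arbitrary element $u\in I+J$, combined with the fact that any $v\in I\cap J$ simultaneously belongs to $I$ and to $J$ (and similarly for the form parameters $\Gamma,\Delta$ in the long-root positions).

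First I would handle the elementary conjugates. For a short root position $i\ne\pm j$, any generator of the form $Z_{ij}(uv,c)$ with $u\in I+J$ and $v\in I\cap J$ splits, via additivity of the root unipotent $T_{ij}$, as
\[
Z_{ij}(uv,c)=Z_{ij}(u_1v,c)\cdot Z_{ij}(u_2v,c),
\]
where $u_1v\in IJ$ and $u_2v\in JI$; both factors are then generators of the right hand side of type $Z_{ij}(ab,c)$ and $Z_{ij}(ba,c)$ respectively, with $a\in(\FidealI)$, $b\in(\FidealJ)$. The same argument, with the obvious bookkeeping of the $\lambda^{-(\e(i)+1)/2}$-scalings, covers the long-root case and the mirror type $Z_{ij}(vu,c)$.

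Next I would treat the elementary commutators $Y_{ij}(u,v)$ using the symbol-like properties from Theorem~\ref{symb}. Writing again $u=u_1+u_2$, additivity in the first argument gives
\[
Y_{ij}(u,v)\equiv Y_{ij}(u_1,v)\cdot Y_{ij}(u_2,v)\pmod{\EU\big(2n,\big((\FidealI)+(\FidealJ)\big)\circ\big((\FidealI)\cap(\FidealJ)\big)\big)},
\]
and each of $Y_{ij}(u_1,v)$, $Y_{ij}(u_2,v)$ is a generator of the right hand side, since $u_1\in(\FidealI)$, $v\in(\FidealJ)$ in the first case and $u_2\in(\FidealJ)$, $v\in(\FidealI)$ in the second. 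The modulus $\EU\big(2n,((\FidealI)+(\FidealJ))\circ((\FidealI)\cap(\FidealJ))\big)$ is harmless: a direct check shows that $\big((\FidealI)+(\FidealJ)\big)\circ\big((\FidealI)\cap(\FidealJ)\big)\le(\FidealI)\circ(\FidealJ)$ (both at the ideal level $(I+J)(I\cap J)+(I\cap J)(I+J)\subseteq IJ+JI$ and at the form parameter level), so that this subgroup is contained in $\EU(2n,(\FidealI)\circ(\FidealJ))$, which in turn is contained in $[\FU(2n,\FidealI),\FU(2n,\FidealJ)]=[\EU(2n,\FidealI),\EU(2n,\FidealJ)]$ by Lemma~6 of \S4 and Theorem~\ref{equality2}.

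The step that requires the most care will be the long-root case of the $Y$ generators, since there the splitting of $u\in\lambda^{-(\e(i)+1)/2}(\Gamma+\Delta)$ into summands from $\lambda^{-(\e(i)+1)/2}\Gamma$ and $\lambda^{-(\e(i)+1)/2}\Delta$, together with the dual interpretation of $v\in\lambda^{(\e(i)-1)/2}(\Gamma\cap\Delta)$, must be matched exactly with the scaling conventions used in Theorem~\ref{symb} in order to guarantee that each of the resulting commutators $Y_{i,-i}(u_1,v)$ and $Y_{i,-i}(u_2,v)$ is genuinely a generator of the mixed commutator subgroup $[\EU(2n,\FidealI),\EU(2n,\FidealJ)]$. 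Once this verification is in place, the conjugating factor $x\in\EU(2n,\FormR)$ appearing in front of any generator is absorbed by Lemmas~\ref{Op-1} and \ref{Op-2}, and the proof is complete.
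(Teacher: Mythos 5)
Your proof is correct, and your overall plan coincides with the paper's: split the first argument via the decomposition $u=u_1+u_2$ with $u_1\in I$, $u_2\in J$, use the additivity from Theorem~\ref{symb}, and absorb the error term by the observation that $((\FidealI)+(\FidealJ))\circ((\FidealI)\cap(\FidealJ))\le(\FidealI)\circ(\FidealJ)$. But there are two points where your route differs from the paper's and is worth a remark.

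First, the paper dispenses with the $Z$-type generators in one stroke: the level inclusion immediately gives $\EU(2n,((\FidealI)+(\FidealJ))\circ((\FidealI)\cap(\FidealJ)))\le\EU(2n,(\FidealI)\circ(\FidealJ))$, so all elementary conjugates on the left already sit inside the right-hand side and only the $Y$-type generators need attention. Your explicit splitting $Z_{ij}(uv,c)=Z_{ij}(u_1v,c)Z_{ij}(u_2v,c)$ is correct but redundant; worth noticing so that you do not carry unnecessary bookkeeping (especially the long-root $\lambda$-scalings you flag) through that part.

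Second, and more substantively: after the split, the paper handles the ``wrong-sided'' commutator $Y_{ij}(b,c)$ with $b\in J$, $c\in I\cap J\subseteq I$ by invoking Theorem~\ref{The:Op-2.1} to flip it into $Y_{ij}(c,-b)$, which is literally a generator of the prescribed shape with first entry in $(\FidealI)$ and second in $(\FidealJ)$. You instead observe directly that $Y_{ij}(u_2,v)=[T_{ji}(u_2),T_{ij}(v)]$ with $T_{ji}(u_2)\in\FU(2n,\FidealJ)$ and $T_{ij}(v)\in\FU(2n,\FidealI)$ already lies in $[\FU(2n,\FidealJ),\FU(2n,\FidealI)]=[\FU(2n,\FidealI),\FU(2n,\FidealJ)]=[\EU(2n,\FidealI),\EU(2n,\FidealJ)]$, by symmetry of mixed commutator subgroups and Theorem~\ref{equality2}. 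This is a genuinely more elementary route: it avoids the symbol relation of Theorem~\ref{The:Op-2.1} altogether and only requires the unrelativisation already proved in Theorem~\ref{equality2}. The one thing to tighten in your write-up is the phrase ``is a generator of the right hand side'': strictly, $Y_{ij}(u_2,v)$ is not in the generating set of Theorem~\ref{generators} for the ordered pair $(\FidealI,\FidealJ)$; it is a generator for the pair $(\FidealJ,\FidealI)$, which suffices because the two mixed commutator subgroups coincide. Making that symmetry explicit also handles the long-root scaling concern you raise: after the split one of $u_1,u_2$ lands in $\lambda^{-(\e(i)+1)/2}\Gamma$ and the other in $\lambda^{-(\e(i)+1)/2}\Delta$, while $v\in\lambda^{(\e(i)-1)/2}(\Gamma\cap\Delta)$ is simultaneously of both types, so in each case the pair of arguments is admissible for one of the two orderings of $(\FidealI,\FidealJ)$.
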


\begin{proof}
The observation immediately preceding the theorem shows that
the level of the left hand side is contained in the level of the right
hand side,
$$ \EU\Big(2n,R,
\big((\FidealI)+(\FidealJ)\big)\circ
\big((\FidealI)\cap(\FidealJ)\big)\Big)\le 
\EU\big(2n,R,(\FidealI)\circ(\FidealJ)\big). $$
\par
Thus, it only remains to prove that the elementary commutators
$Y_{ij}(a+b,c)$, with $a\in(\FidealI)$, $b\in(\FidealJ)$, 
$c\in(\FidealI)\cap(\FidealJ)$,
in the left hand side belong to the right hand side.
\par
By Theorem~\ref{symb}, one has
$$ Y_{ij}(a+b,c)\equiv Y_{ij}(a,c)\cdot Y_{ij}(b,c)
\pamod{\EU\big(2n,R,
((\FidealI)+(\FidealJ))\circ((\FidealI)\cap(\FidealJ))\big)}. $$
\noindent
Thus, this congruence holds also modulo the larger subgroup
$\EU(2n,R,(\FidealI)\circ(\FidealJ))$. 
\par
On the other hand, Theorem~\ref{The:Op-2.1} implies that 
$$ Y_{ij}(b,c)\equiv Y_{ij}(c,-b)
\pamod{\EU(2n,R,(\FidealI)\circ(\FidealJ))}. $$ 
\par
Combining the above congruences, we see that
$$ Y_{ij}(a+b,c)\equiv  Y_{ij}(a,c)\cdot Y_{ij}(c,-b)
\pamod{\EU(2n,R,(\FidealI)\circ(\FidealJ))}, $$ 
\noindent
where both commutators in the right hand side belong to
$[\EU(2n,\FidealI),\EU(2n,\FidealJ)]$, which proves the desired inclusion.
\end{proof}


\section{Final remarks}

Here we make some further observations concerning the context 
of this work and also state some unsolved problems and reiterate 
some further problems from \cite{RNZ1, RNZ5}, which are still 
pending.  

\subsection{How we got here.} The study of birelative standard
commutator formulas goes back to the foundational work by 
Hyman Bass \cite{Bass_stable}. As early successes one should 
also mention important contributions by Alec Mason and Wilson 
Stothers \cite{MAS3, Mason74, MAS1, MAS2} and by Hong You \cite{HongYou}.
Our own research in this direction started in 2008--2010 in the joint
works with Alexei Stepanov and Roozbeh Hazrat \cite{NVAS, RHZZ1, NVAS2} and was then continued in 2011--2017 in a series of our 
joint works based on relative versions of localisation 
methods, 
in particular\footnote{At least three our scheduled works of that 
period, which were essentially completed by 2016, viz., the
general multiple commutator formula for $\GL(n,R)$, unitary 
commutator width, and analysis of the case $\GU(4,R,\Lambda)$, 
still remain
unpublished.} \cite{ RHZZ2, RNZ1, RNZ2, RNZ3, RNZ4, RNZ5}. 
Simultaneously, Stepanov developed his universal localisation
and applied it to multiple commutator formulas and commutator 
width, see \cite{Stepanov_nonabelian, stepanov10}. One can find
systematic description of that stage of development in our
surveys and conference papers \cite{yoga-1, portocesareo, yoga-2, RNZ4}. 

The present work is a natural extension of our more recent papers
\cite{NV18, NZ2, NV19, NZ1, NZ3, NZ6, NZ4}. It owes its 
existence to the two following momentous observations we 
made in October 2018, and in September 2019, respectively.
\par
In October 2018 the first author proved a special case of 
Theorems~\ref{equality2} and \ref{unrelative} for the general linear group $\GL(n,R)$,
$n\ge 3$, over commutative rings, see \cite{NV18}. The
initial proof employed a version of decomposition of unipotents 
\cite{ASNV}, that was already used for a similar purpose
in his joint work with Alexei Stepanov \cite{NVAS}. The second 
author then immediately observed that Theorem~\ref{equality2} implies the
first claim of Theorem~\ref{generators} and that it should be possible to proceed conversely, first establish a version of Theorem~\ref{generators} by elementary calculations, and then derive Theorems~\ref{equality2} and \ref{unrelative}. This is exactly 
what was done for Chevalley groups in our paper \cite{NZ2}, 
again over commutative rings.
\par
In July--September 2019 the first author was discussing bounded
generation of Chevalley groups in the function case with Boris Kunyavsky and Eugene Plotkin. One of
the tricks used in many published papers consisted in splitting
an elementary conjugate/elementary commutator and then reassembling it in a different position. We noticed that the same calculation of rolling
elementary conjugates to a different position appeared over
and over again in many different contexts:
\par\smallskip
$\bullet$ {\it Congruence subgroup problem.\/} In a preliminary mode
it was already present in the precursory article by Jens Mennicke
\cite{Mennicke}
and then already in full-fledged form in the epoch-making memoir
by Hyman Bass, John Milnor, and Jean-Pierre Serre
\cite{Bass_Milnor_Serre}, behold the proof of 
Theorem 5.4. 
\par\smallskip
$\bullet$ {\it  Bounded generation\/}. Post factum, we discerned the
same calculation in the classical papers by David Carter, Gordon 
Keller, and Oleg Tavgen \cite{CK83, Tavgen1990}, but we only 
became aware of that perusing a recent article by Bogdan Nica \cite{Nica}.
\par\smallskip
$\bullet$ In fact, Wilberd van der Kallen and Alexei Stepanov 
\cite{vdK-group, Stepanov_calculus, Stepanov_nonabelian} use 
a very similar calculation to reduce the generating sets of 
relative elementary subgroups.
\par\smallskip\noindent
Here we attached merely a handful of references. Retrospectively, 
we spotted the same or very similar calculations in oodles of further
papers, but apparently it was hardly ever applied in the birelative context.
\par
At the end of September the first author used essentially the same 
calculation\footnote{Simultaneously and independently exactly 
the same calculation was applied by Andrei Lavrenov and Sergei 
Sinchuk \cite{Lavrenov_Sinchuk} at 
the level of $\K_2$.} to prove that when $R$ is commutative and 
$n\ge 3$ the mixed relative commutator subgroup $[E(n,A),E(n,B)]$ is contained in another birelative group
$$ \EE(n,A,B)=\big\langle t_{ij}(c),\text{\ where\ } 
c\in A, i<j,\text{\ and\ } c\in B, i>j\big\rangle, $$
\noindent
see \cite{NV19}, Theorem 3. Within a few days of vehement correspondence we observed that everything works over arbitrary associative rings and can be further enhanced to entail 
Theorems 1 and 5 for $\GL(n,R)$. This is done in \cite{NZ2}, and 
soon thereafter in a more mature form, implying also Theorems 6,
7 and 8, in \cite{NZ3}.
\par
Morally, the present paper, and a parallel paper that addresses 
the case of Chevalley groups \cite{NZ4}, are direct offsprings
of this development. However, technically these cases turned 
out to be way more demanding, and we had to spend quite some 
time to supply detailed proofs of all auxiliary results.

\subsection{Degree improvements.}
Of course, the first question that immediately occurs is 
whether Theorem~\ref{T:7} holds also for $n=3$. For 
{\it quasi-finite\/} rings this is indeed the case \cite{RNZ4},
and we are pretty more inclined to believe in the positive answer.

\begin{Prob}
Prove that Lemma~$\ref{quadruple}$ and Theorem~$\ref{T:7}$
hold also for $n=3$.
\end{Prob}

Getting a proof in the same style as that of Lemma~\ref{triple}
seems to be highly non-trivial from a technical viewpoint. However, 
the possibility to construct a counter-example appears even 
more remote.

In the main body of the present paper we always assumed that 
$n\ge 3$. Obviously, due to the exceptional behavior of the 
orthogonal group $\SO(4,A)$, these results do not fully generalise 
to the case $n = 2$. It is natural to ask, whether results 
of the present paper hold also for the group $\GU(4,\FormR)$. 
However, this obviously fails in general without some strong
additional assumptions on the form ring and/or form ideals. 

Still, we believe they do generalise, provided 
$\Lambda A+A\Lambda=A$, or the like. Known 
results\footnote{Compare the work by Bak and the first author \cite{BV1}, and references therein.} clearly indicate both that this should be 
possible, and that the analysis of the case $n = 2$ will be 
considerably harder from a technical viewpoint, than that 
of the case $n\ge 3$.

\begin{Prob}\label{p4} 
Generalise results of the present paper to the group
$\GU(4,A,\Lambda)$, provided that
$\Lambda A+A\Lambda=A$, $\Gamma J+J\Gamma =I$,
$\Delta I+I\Delta=J$, or the like.
\end{Prob}

Actually, some 8 years ago we have obtained various headways 
towards the relative standard commutator formula and all that for 
$\GU(4,A,\Lambda)$, but even these results are unpublished,
due to their fiercely technical character.

\subsection{Presentations and stability.}
As a counterpart to Theorem~\ref{T:9} we can ask, whether the 
stability map for this quotient is also injective. A natural approach 
to this would be to tackle the following much more ambitious 
project.

\begin{Prob}
Give a presentation of
$$ \big[\EU(2n,\FidealI),\EU(2n,\FidealJ)\big]
/\EU(2n,A,(\FidealI)\circ(\FidealJ)) $$
\noindent
by generators and relations. Does this presentation depend on 
$n\ge 3$? 
\end{Prob}

In Theorems~\ref{The:Op-2.1} and \ref{symb} and Lemma~\ref{long-short} we have established some 
of the relations among the elementary commutators modulo
$\EU(2n,A,(\FidealI)\circ(\FidealJ))$. However, easy arithmetic
examples show this is not a defining set of relations, so that there 
must be some further relations. Compare \cite{NZ2, NZ3, NZ6} for 
discussion of the similar problem for $\GL(n,A)$.

\subsection{Higher relations.} In \cite{NZ6} we established some 
further congruences for the elementary commutators in
$\GL(n,A)$, $n\ge 3$, where $A$ is an arbitrary associative ring.
The highlight of that paper is the following remarkable triple 
congruence, a version of the Hall---Witt identity.

Let $I,J,K$ 
be two-sided ideals of $R$. Then for any  three distinct indices $i,j,h$
such that $1\le i,j,h\le n$, and all $a\in I$, $b\in J$, $c\in K$, 
one has
$$ y_{ij}(ab,c) y_{jh}(ca,b) y_{hi}(bc,a)\equiv e
\pamod{E(n,R,IJK+JKI+KIJ)}, $$
\noindent
see \cite{NZ6}, Theorem 1. This identity has lots of applications,
including many new inclusions among double and multiple mixed 
relative elementary commutator subgroups.
\par
Specifically, it allows to solve the analogue of Problem 3 for 
$\GL(n,A)$ in the particularly agreeable case of Dedekind rings. 
Thus, it would be most natural to seek out similar higher 
congruences in the unitary case as well.

\begin{Prob}
Generalise the results of \cite{NZ6} to the unitary groups
$\GU(2n,\FormR)$, $n\ge 3$.
\end{Prob}

One such congruence among {\it short\/} root type elementary commutators is immediately clear. But the congruences involving
long root type elementary commutators will be fancier and longer.

\subsection{Other birelative groups.}
Let us briefly discuss two further groups depending on two form 
ideals of a form ring. First of all, it is the partially relativised 
group ${\FU(2n,\FidealI)}^{\FU(2n,\FidealJ)}$.
It seems that in view of the identity
$$ {\FU(2n,\FidealI)}^{\FU(2n,\FidealJ)}=
[\FU(2n,\FidealI),\FU(2n,\FidealJ)]\cdot \FU(2n,\FidealI), $$
\noindent 
our Theorem~\ref{generators} readily implies the following generalisation of 
\cite{BV3}, Proposition 5.1, to 
${\FU(2n,\FidealI)}^{\FU(2n,\FidealJ)}$.
Namely, we assert that it is generated by the appropriate 
elementary conjugates.

\begin{Prob}\label{p7.1} 
Prove that the partially relativised groups 
${\FU(2n,\FidealI)}^{\FU(2n,\FidealJ)}$ are generated by
${}^{T_{ji}(b)}T_{ij}(a)$, where $a\in(\FidealI)$, $b\in(\FidealJ)$.
\end{Prob}

Another birelative group $\EEU(2n,(\FidealI),(\FidealJ))$ is
defined as follows
$$ \EEU(2n,(\FidealI),(\FidealJ))=
\big\langle T_{ij}(a),\text{\ where\ } c\in(\FidealI), i<j,\text{\ and\ } 
c\in(\FidealJ), i>j\big\rangle. $$
\par
The following problem proposes a unitary generalisation
of \cite{NV19}, Theorem 3, where a similar result was
established for $\GL(n,A)$.

\begin{Prob}\label{p6} 
Prove that 
$$ [\FU(2n,\FidealI),\FU(2n,\FidealJ)]\le
 \EEU(2n,(\FidealI),(\FidealJ)). $$
\end{Prob}

\subsection{General multiple commutator formula.}
Let us now recall another major unsolved problem as stated 
already in \cite{RNZ1, RNZ4} and \cite{RNZ5}, Problem 1.
We proffer to prove general multiple commutator formula 
for unitary groups.

\begin{Prob}\label{p9} 
Let $(I_i,\Gamma_i)$, $1\le i\le m$, be form ideals of the form
ring $(A,\Lambda)$ such that $A$ is module-finite over a
commutative ring $R$ that has finite Bass–Serre dimension 
$\delta(R)=d<\infty$. Prove that for any $m\ge d$ one has
\begin{multline*}
\big\llbracket\GU(2n,I_0,\Gamma_0),\GU(2n, I_1,\Gamma_1),\ldots,
\GU(2n,I_m,\Gamma_m)\big\rrbracket=\\
\big\llbracket\EU(2n,I_0,\Gamma_0),\EU(2n, I_1,\Gamma_1),\ldots,
\EU(2n, I_m,\Gamma_m)\big\rrbracket. 
\end{multline*}
\end{Prob}

Observe that the arrangement of brackets in the above formula 
should be the same on both sides as the mixed commutators are
not associative. A similar problem for algebraic groups over {\it commutative\/} rings, in particular for Chevalley groups, was solved
by Alexei Stepanov \cite{stepanov10}, by his remarkable
{\it universal localisation\/} method.
\par
Recall that the proof of a similar result for $\GL(n,R)$ over
{\it non-commutative\/} rings is based on the following result of Mason---Stothers \cite{MAS3}, Theorem 3.6 and Corollary 3.9, 
see \cite{RNZ4}, Theorem 13, for an easy modern proof. Of
course, that we can unrelativise the right hand side was only 
established in \cite{NZ2}, Theorem 2, so formally this
theorem was never stated in this form.

\begin{oldtheorem}
Let $A$ be a ring, $I$ and $J$ be two two-sided
ideals of $A$. Assume that $n\ge\sr(R),3$. Then
$$ [\GL(n,A,I),\GL(n,A,J)] = [E(n,I),E(n,J)]. $$
\end{oldtheorem}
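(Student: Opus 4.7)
My plan is to obtain the nontrivial inclusion $[\GL(n,A,I),\GL(n,A,J)]\subseteq[E(n,I),E(n,J)]$ by chaining together four classical ingredients, all made available by the hypothesis $n\ge\sr(A)$ and $n\ge 3$; the reverse inclusion is immediate from $E(n,L)\subseteq\GL(n,A,L)$ for any ideal $L$.

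The first step is a direct matrix identity that localises the level of the commutator. For $g=1+a\in\GL(n,A,I)$ and $h=1+b\in\GL(n,A,J)$ a brief calculation yields
$$[g,h]-1=(ab-ba)\,g^{-1}h^{-1},$$
whose entries lie in $IJ+JI$. Consequently $[\GL(n,A,I),\GL(n,A,J)]\subseteq\GL(n,A,IJ+JI)$.

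Next I would bring in the stable-range machinery. Bass's $K_1$-stability gives that $\GL(n,A)/E(n,A)$ is abelian for $n\ge\sr(A)$, so every commutator of elements of $\GL(n,A)$ automatically lands in $E(n,A)$; Vaserstein's intersection lemma, also valid in the same range, then provides $\GL(n,A,L)\cap E(n,A)=E(n,L)$, which pins the mixed commutator down to
$$[\GL(n,A,I),\GL(n,A,J)]\subseteq\GL(n,A,IJ+JI)\cap E(n,A)=E(n,IJ+JI).$$
Finally I would verify $E(n,IJ+JI)\subseteq[E(n,I),E(n,J)]$: using additivity of the elementary transvections, this reduces to treating generators $t_{ij}(ab)$ and $t_{ij}(ba)$ with $a\in I$, $b\in J$, and for $n\ge 3$ the Steinberg relation $t_{ij}(ab)=[t_{ih}(a),t_{hj}(b)]$ with a third index $h$ puts each generator inside $[E(n,I),E(n,J)]$. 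Since $E(n,I),E(n,J)\unlhd E(n,A)$, their mutual commutator is also normal in $E(n,A)$, so the normal closure of these generators remains inside $[E(n,I),E(n,J)]$.

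The main obstacle lies in the stable-range inputs, Bass's $K_1$-stability and Vaserstein's intersection lemma; these are the substantive classical facts that force a $\GL$-commutator into $E$ in the first place, and without them the whole chain breaks at the second step. Once these are granted, what remains is a single matrix identity together with a transparent use of the Steinberg relations.
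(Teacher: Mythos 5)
The paper does not actually prove this statement: it is quoted (as Theorem C) from Mason--Stothers, with the remark that the unrelative right-hand side only became available through \cite{NZ2}, Theorem~1. So there is no internal proof to compare with, and your proposal must be judged on its own. It has a genuine gap, concentrated in one place. The ``intersection lemma'' you invoke is stated with the wrong group on the right: the classical fact (Bass, Vaserstein) is $\GL(n,A,L)\cap E(n,A)=E(n,A,L)$, where $E(n,A,L)=E(n,L)^{E(n,A)}$ is the \emph{relative} elementary subgroup, not the unrelative $E(n,L)$ generated by the $t_{ij}(c)$, $c\in L$. These differ in general even under the strongest stability hypotheses: for $A=\mathbb{Z}/p^2$ and $L=(p)$ one has $\sr(A)=1$ and $L^2=0$, so that $E(n,L)$ consists of $e+x$ with $x$ strictly off-diagonal over $L$, whereas the conjugate $z_{ij}(c,d)={}^{t_{ji}(d)}t_{ij}(c)=e+ce_{ij}-cde_{ii}+dce_{jj}-dcde_{ji}$ has a nonzero diagonal part and lies in $\GL(n,A,L)\cap E(n,A)$ but not in $E(n,L)$. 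Your chain therefore only yields $[\GL(n,A,I),\GL(n,A,J)]\subseteq E(n,A,IJ+JI)$, the relative group of the product level.

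The sentence meant to absorb this discrepancy --- ``since $E(n,I),E(n,J)\unlhd E(n,A)$, the normal closure of these generators remains inside $[E(n,I),E(n,J)]$'' --- rests on a false premise: $E(n,I)$ is \emph{not} normal in $E(n,A)$; its normal closure is the strictly larger $E(n,A,I)$, as the example above shows. What you actually need is that $[E(n,I),E(n,J)]$ is normal in $E(n,A)$, equivalently that $[E(n,A,I),E(n,A,J)]=[E(n,I),E(n,J)]$. This is true, but it is precisely the nontrivial unrelativisation theorem (\cite{NZ2}, Theorem~1; its unitary analogue is Theorem~2 of the present paper), and the paper explicitly flags it as the extra ingredient without which Theorem~C cannot be stated in this unrelative form. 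So your proposal silently assumes the hard part. The repaired argument reads: the level computation gives containment in $\GL(n,A,IJ+JI)$; relative stability gives $\GL(n,A,IJ+JI)\cap E(n,A)=E(n,A,IJ+JI)$ (note Bass's theorems are usually stated for $n\ge\sr(A)+1$, so the stable-range bookkeeping needs care); conjugating the Steinberg identity $t_{ij}(ab)=[t_{ih}(a),t_{hj}(b)]$ by elements of $E(n,A)$ gives the easy inclusion $E(n,A,IJ+JI)\subseteq[E(n,A,I),E(n,A,J)]$; and only then does the cited unrelativisation theorem bring you down to $[E(n,I),E(n,J)]$.
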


For unitary groups, even such basic facts at the stable level 
seem to be missing. 

\begin{Prob}\label{p8} 
Find appropriate stability conditions under which
$$ [\GU(2n,\FidealI),\GU(2n,\FidealJ)]=
[\FU(2n,\FidealI),\FU(2n,\FidealJ)]. $$
\end{Prob}

After that, the proof in our unpublished paper proceeds 
by induction on $d$, which depends on Bak’s results \cite{Bak}, 
precise form of injective stability for $K_1$, such as the 
Bass–Vaserstein theorem, etc. It seems that to solve Problem 7 
one has to rethink and expand many aspects of structure theory 
of unitary groups, starting with stability theorems for $\KU_1$.

The first complete\footnote{In late 1960-ies
and mid 1970-ies Anthony Bak 
and  Manfred Kolster obtained stability under stronger assumptions,
with very sketchy proofs. Leonid Vaserstein worked in smaller 
generality as far as groups, and his proof of injective stability for
unitary groups 
contained serious gaps and inaccuracies. In 1980 Mamed-Emin Oglu Namik Mustafa-Zadeh announced surjective stability for $\KU_2$ ---
and thus also injective stability for $\KU_1$ ---
in full generality. However, a complete proof was never published,
and the exposition in his 1983 Ph.~D.~Thesis is blurred by serious mistakes.}  generally accepted proof of injective stability 
for $\KU_1$ was obtained (but not published!)~by Maria
Saliani \cite{Saliani}, and first published by Max Knus in his book 
\cite{knus}. After that, generalisations and improvements were
proposed by Anthony Bak, Guoping Tang, Victor Petrov,
and Sergei Sinchuk \cite{BT, BPT, Sinchuk}, and then very
recently by Weibo Yu, Rabeya Basu and Egor Voronetsky 
\cite{Yu, Basu18, Voron-2}.
\par
Problem 7 is also intimately related to the nilpotent structure 
of $\KU_1$. In the absolute case the corresponding results for
unitary groups were obtained by Roozbeh Hazrat in his 
Ph.~D.~Thesis \cite{RH, RH2}, and in the
relative case in a joint paper by Bak, Hazrat and the first 
author \cite{BHV}. To fully cope with Problem 7, we need more 
powerful results on the superspecial unitary groups than what 
was established in \cite{BHV}. Part of what is demanded here 
was recently established by Weibo Yu, Guoping Tang and Rabeya 
Basu \cite{Yu-Tang, Basu16}, but there is still a lot of work to be
done.

\subsection{Subnormal subgroups.}
Initially, one of our main motivations to pursue the work 
on birelative commutator formulas were prospective 
applications to the study of subnormal subgroups of 
$\GU(2n,A,\Lambda)$. As was observed by John Wilson
\cite{Wilson}, technically this amounts
to description of subgroups of $\GU(2n,\FormR)$, 
normalised by a relative elementary subgroup $\EU(2n,\FidealJ)$,
for {\it some\/} form ideal $(\FidealJ)$.
\par
A major early contribution is due to Günter Habdank \cite{Ha1, Ha2},
who additionally assumed that the form ring was subject to some stability conditions.
Definitive results for quasi-finite rings were then obtained by the second author and You Hong \cite{ZZ, ZZ1, ZZ2, You_subnormal}. 
However, we are convinced that the bounds in these papers can be further improved and hope to return to the following problem with 
our new tools.

\begin{Prob} Obtain optimal bounds in the description of subgroups 
of $\GU(2n,\FormR)$, normalised by the relative elementary
subgroup $\EU(2n,\FidealJ)$, for a form ideal 
$(\FidealJ)\unlhd(\FormR)$.
\end{Prob}

Until recently, for the unitary groups the proofs of structure theorems
were in bad shape even in the absolute case\footnote{As indicated in \cite{RN}, the proof in the work by Leonid Vaserstein and Hong You \cite{VY} contained a major omission, and only established the 
{\it weak\/} structure theorem. The details of the purported global 
proof by Bak and the first author, that was around since the early 
1990-ies, and that was harbingered in \cite{BV3}, remained unpublished.}. However, now the situation has changed. In 2013 
Hong You and 
Xuemei Zhou \cite{You-Zhou} published a detailed proof for
commutative form rings. Finally, in 2014 Raimund Preusser in
his Ph.~D.~Thesis \cite{Preusser-thesis} gave a first complete
{\it localisation proof\/} for quasi-finite form rings, which is 
published in \cite{preusser-1}.
\par
In 2017 Raimund Preusser \cite{preusser-2, preusser-3} has also 
finally succeeded in completing a {\it global proof\/} as
envisaged in \cite{BV3}. These papers constitute a major 
breakthrough since, at least for commutative rings, they give 
explicit polynomial expressions of non-trivial transvections as 
products of elementary conjugates of a given matrix and its inverse.
(See also \cite{preusser-4, preusser-6} for further results in this
spirit for $\GL(n,A)$ over various classes of non-commutative 
rings.)
The first author has immediately recognised  that the results
by Preusser procure an effectivisation for the description of
normal subgroups in much the same sense as the {\it 
decomposition of unipotents\/} \cite{ASNV}, does for the 
normality of the elementary subgroup. This prompted him
to call this method {\it reverse decomposition of unipotents}
\cite{NV-reverse}. Moreover, he noticed that in the case
of $\GL(n,A)$ these results can be generalised (with only 
marginally worse bounds) to the description of subgroups
normalised by a {\it relative\/} elementary subgroups
\cite{NV-reverse-2}.
\par
We are confident that, combining the methods developed by
Preusser in the above papers with our methods, we could
easily improve bounds in all published results for unitary
groups. Of course, to prove that the bounds thus obtained 
are themselves the best possible ones would be quite a 
challenge.

\subsection{Commutator width.} 
Another related problem that initially motivated our work 
was the study of commutator width. 
Alexander Sivatsky and Alexei Stepanov 
\cite{SiSt} have discovered that over rings of finite Jacobson 
dimension $\jdim(A)=d<\infty$ any commutator $[x,y]$, 
where $x\in\GL(n,A)$, $y\in E(n,A)$, is a product of $\le L$
elementary generators, where $L=L(n,d)$ only depends on
$n$ and $d$. This result was then generalised to all Chevalley 
groups $G(\Phi,A)$ by Stepanov and the first author \cite{SV11},
with the bound depending on the type $\Phi$ and on $d$.
\par
Ultimately, Stepanov discovered that for {\it reductive groups} 
similar results hold for {\it arbitrary\/} commutative rings and
that the bound $L$ therein depends on the type of the group 
alone and not on the ring $A$. Also,
he discovered that similar results hold at the relative and birelative 
level, with elementary conjugates and our generators (like those
in Theorem B) as the
generating sets of $[E(\Phi,A,I),E(\Phi,A,J)]$, again with bounds
that depend on the type alone, and not on $A$, $I$ or $J$. See \cite{portocesareo}
for statements and detailed discussion of these results.
\par
However, Bak's unitary groups are not always algebraic and similar
results on commutator width are not yet published even in
the absolute case and even over finite-dimensional rings.

\begin{Prob}
Let $(\FormR)$ be a commutative form ring such that
$\jdim(A)<\infty$. Prove that the length of commutators in 
$[\GU(\Phi,A,I),E(\Phi,A,J)]$ in terms of the generators listed in
Theorem~$\ref{generators}$ is bounded, and estimate this length.
\end{Prob}

Alexei Stepanov maintained that the above length is bounded in 
the absolute case, without actually producing any specific bound. 
To obtain an exponential bound depending on $d$ by relative
localisation methods \cite{RNZ1, RNZ5, RNZ4} would be simply
a matter of patience. Actually, this was essentially done by ourselves
and Roozbeh Hazrat, but even 
in the absolute case all of this still remains unpublished.
\par
On the other hand, to achieve a {\it uniform\/} polynomial bound, 
similar to the one established in \cite{SiSt} for $\GL(n,A)$ but not
depending on $d$, one would need to combine a full-scale 
generalisation of Stepanov's universal localisation to unitary 
groups, with full-scale unitary versions of decomposition 
of unipotents, including explicit polynomial formulae 
for the conjugates of root unipotents. This seems to be a rather
ambitious project.

\subsection{Unitary Steinberg groups.} 
It is natural to ask to which extent our methods and results
carry over to the level of $\KU_2$.

\begin{Prob} 
Prove analogues of the main results of the present paper
for the unitary Steinberg groups $\StU(2n,\FormR)$.
\end{Prob}

For the definition of unitary Steinberg groups see
\cite{B2, lavrenov} and references there (or \cite{lavrenov-bis}
for odd unitary Steinberg groups). Here, we do not discuss 
subtleties related to the definition of relative unitary Steinberg 
groups, as also relation to excision in unitary algebraic $K$-theory, 
etc.

\subsection{Description of subgroups.} The methods of the 
present paper can have applications also in description of 
various classes of subgroups of unitary groups. Not in the
position to discuss this at any depth here, we just cite
the works by Victor Petrov, Alexander Shchegolev and 
Egor Voronetsky \cite{petrov1, Shchegolev-thesis, Shchegolev-1,
Shchegolev-2, Voron-1} where one can find many further 
references. Observe that the result by Voronetsky \cite{Voron-1} 
is especially powerful, since it simultaneously generalises also
the description of $\EU$-normalised subgroups (in the context
of odd unitary groups!)

\subsection{Odd unitary groups.}
Finally, we are positive that all results of the present paper 
generalise also to odd unitary groups introduced by 
Victor Petrov \cite{petrov2, petrov3}. 

\begin{Prob}\label{p7} 
Generalise the results of \cite{RNZ1, RNZ3, RNZ4} and the 
present paper to odd unitary groups, under suitable isotropy
assumptions.
\end{Prob}

Of course, this is not an individual clear-cut problem, but 
rather a huge research project. Clearly, in most cases the 
proofs in this setting will require much more onerous calculations.
Let us cite some important recent papers by Yu Weibo, 
Tang Guoping, Li Yaya, Liu Hang, Anthony Bak, Raimund Preusser
and Egor Voronetsky \cite{Yu-Tang, Yu_Li_Liu, BP, preusser-5,
Voron-1, Voron-2} that address normal structure and stability 
for odd unitary groups.

\subsection{Acknowledgements.}
We thank Anthony Bak, Roozbeh Hazrat and Alexei Stepanov 
for long-standing close cooperation on this type of problems 
over the last decades. The present paper gradualy evolved to 
the current shape between December 2018 and March 2020.
The first author thanks Boris Kunyavsky and Eugene Plotkin, 
for ongoing 
discussion and comparison of the 
existing proofs of the congruence subgroup problem and 
bounded generation in terms of elementaries. The bout of
these deliberations that has taken place on September 16, 
2019, first in ``Biblioteka Cafe'', and then in ``Manneken Pis'' 
on Kazanskaya, was especially fateful for \cite{NV19} and all subsequent development. We thank Pavel Gvozdevsky,
Andrei Lavrenov, Sergei Sinchuk and Anastasia Stavrova
for their very pertinent questions and comments.
We are extremely grateful also to Fan Huijun for his 
friendly support. In particular, he organised a visit of the first 
author to Peking University in December 2019, which gave us
an excellent opportunity to coordinate our vision. 



\begin{thebibliography}{10}


\bibitem{B1} A.~Bak, \emph{The stable structure of quadratic modules.}
Thesis, Columbia University, 1969.

\bibitem{B2} A.~Bak, \textit{K-Theory of Forms}. Annals of
Mathematics Studies {\bf 98}, Princeton University Press.
Princeton, 1981.

\bibitem{Bak} 
 A.~Bak, 
\emph{Non-abelian $\K$-theory: The
nilpotent class of $\K_1$ and general stability},
$K$--Theory \textbf{4} (1991), 363--397.

\bibitem{BHV} 
A.~Bak, R.~Hazrat, N.~Vavilov, 
\emph{Localization-completion strikes again: relative
$K_1$ is nilpotent by abelian,} 
J. Pure Appl. Algebra, \textbf{213} (2009), 1075–1085.

\bibitem{BPT}
A.~Bak, V.~Petrov, Guoping Tang,  
\emph{Stability for quadratic $K_1$,}
 J. K-Theory \textbf{30} (2003), 1--11. 

\bibitem{BP} A.~Bak,  R.~Preusser.
\emph{The $E$-normal structure of odd-dimensional unitary groups.}
{J. Pure Appl. Algebra}, {\bf 222} (2018), no.~9, 2823--2880.

\bibitem{BT}
A.~Bak, Guoping Tang, 
\emph{Stability for Hermitian $K_1$,}
J. Pure Appl. Algebra \textbf{150} (2000), no.~2, 107--121.

\bibitem{BV1} A.~Bak, N.~Vavilov,  
\emph{Normality for elementary subgroup functors. }
Math. Proc. Cambridge Philos. Soc. \textbf{118} (1995), no.~1, 35--47. 

\bibitem{BV3} A.~Bak, N.~Vavilov,  \emph{Structure of hyperbolic unitary
groups I: elementary subgroups.} {Algebra Colloquium},
\textbf{7} (2000), no.~2, 159--196.

\bibitem{Bass_stable} 
H.~Bass, \emph{$\K$-theory and stable algebra,}
Inst. Hautes \'Etudes Sci. Publ. Math.  (1964), no.~22, 5--60.

\bibitem{bass73} H.~Bass, \emph{Unitary algebraic $K$-theory.}
{Lecture Notes Math.}, {\bf343} (1973), 57--265.

\bibitem{Bass_Milnor_Serre}
H.~Bass, J.~Milnor, J.-P.~Serre,
\emph{Solution of the congruence subgroup problem for
$\SL_n$ $(n\ge3)$ and $\Sp_{2n}$ $(n\ge2)$,}
Inst. Hautes \'Etudes Sci. Publ. Math. \textbf{33} (1967)
59--133.

\bibitem{Basu16} 
R.~Basu, 
\emph{Local-global principle for general quadratic and general
hermitian groups and the nilpotency of $\KH_1$.} 
J. Math. Sci. (N.Y.) \textbf{232} (2018), no.~5, 591--609.

\bibitem{Basu18}
R.~Basu, 
\emph{A note on general quadratic groups,}
J. Algebra Appl. \textbf{17} (2018), no.~11, 1850217, 13 pp. 

\bibitem{CK83} 
D.~Carter, G.~E.~Keller,
{\it Bounded elementary generation of\/ $\SL_n({ \mathcal O})$},
Amer. J. Math. {\bf 105} (1983), 673--687.

\bibitem{Gerasimov} 
V.~N.~Gerasimov, 
\emph{Group of units of a free product of rings},
Math. U.S.S.R. Sb., \textbf{134} (1989), no.~1, 42--65.

\bibitem{Ha1} G.~Habdank, {\it Mixed commutator groups
in classical groups and a classification of subgroups of classical
groups normalized by relative elementary groups.}
Doktorarbeit Uni. Bielefeld, 1987, 1--71.

\bibitem{Ha2} G.~Habdank, \emph{A classification of subgroups of
$\Lambda$-quadratic groups normalized by relative elementary groups.}
{Adv. Math.}, {\bf 110} (1995), 191--233.

\bibitem{HO} A.~J.~Hahn, O.~T.~O'Meara. \textit{The classical
groups and $\K$-{theory}}. Springer Verlag, Berlin et al., 1989.

\bibitem{RH} R.~Hazrat, \emph{Dimension theory and nonstable $K_1$ of
quadratic modules.} {K-Theory}, \textbf{27} (2002), 293--328.

\bibitem{RH2} R.~Hazrat, {\it On\/ $K$-theory of classical-like groups}.
Doktorarbeit Uni. Bielefeld, 2002, 1--62.

\bibitem{yoga-1} R.~Hazrat, A.~Stepanov, N.~Vavilov, Z.~Zhang,
\emph{The yoga of commutators.} {J. Math.~Sci.}, {\bf 387} (2011), 53--82.

\bibitem{yoga-2} R.~Hazrat, A.~Stepanov, N.~Vavilov, Z.~Zhang,
\emph{The yoga of commutators, further applications.}
{J. Math.~Sci.}, {\bf 200} (2014), 742--768.

\bibitem{portocesareo} R.~Hazrat, A.~Stepanov, N.~Vavilov, Z.~Zhang,
\emph{Commutator width in Chevalley groups.}
{Note di Matematica}, {\bf 33} (2013), 139--170.

\bibitem{RN1} R.~Hazrat, N.~Vavilov,
\emph{$K_1$ of Chevalley groups are nilpotent.} {J. Pure Appl. Algebra}, {\bf179} (2003), 99--116.

\bibitem{RN} R.~Hazrat, N.~Vavilov, \emph{Bak's work on the
$K$-theory of rings, with an appendix by Max Karoubi.}
{J. $K$-Theory}, {\bf 4} (2009), 1--65.

\bibitem{RNZ1}R.~Hazrat, N.~Vavilov, Z.~Zhang, \emph{Relative
unitary commutator calculus  and applications.} {J. Algebra},
{\bf 343} (2011), 107--137.

\bibitem{RNZ2} R.~Hazrat, N.~Vavilov, Z.~Zhang,
\emph{Relative commutator calculus in Chevalley groups.}
{J. Algebra}, {\bf 385} (2013), 262--293.

\bibitem{RNZ3} R.~Hazrat, N.~Vavilov, Z.~Zhang, \emph{Generation of relative commutator subgroups in Chevalley groups.}
  {Proc. of the Edinburgh Math. Soc.}, {\bf 59} (2016), 393--410.

\bibitem{RNZ4} R.~Hazrat, N.~Vavilov, Z.~Zhang, \emph{The commutators of classical groups.} {J. Math. Sci.},
{\bf 222} (2017), 466--515.

\bibitem{RNZ5} R.~Hazrat, N.~Vavilov, Z.~Zhang, \emph{Multiple commutator formulas for unitary groups.} {Israel J. Math.}, {\bf 219} (2017), 287--330.

\bibitem{RHZZ1} R.~Hazrat, Z.~Zhang, \emph{Generalized commutator
formulas.} {Comm. Algebra}, {\bf 39} (2011), 1441--1454.

\bibitem{RHZZ2} R.~Hazrat, Z.~Zhang, \emph{Multiple commutator formulas.}
{Israel J. Math.}, {\bf 195} (2013), 481--505.

\bibitem{vdK-group} 
W.~van der Kallen,
\textit{A group structure on certain orbit sets of unimodular rows,}
J. Algebra \textbf{82} (1983), 363--397.

\bibitem{knus} M.-A.~Knus, \textit{Quadratic and hermitian forms
over rings}. Springer Verlag, Berlin et al., 1991. 

\bibitem{lavrenov} A.~V.~Lavrenov,\emph{ The unitary Steinberg group is
centrally closed.} {St.~Petersburg Math. J.}, {\bf 24} (2013), 783--794.

\bibitem{lavrenov-bis} A.~V.~Lavrenov,
\emph{On odd unitary Steinberg group.}
arXiv:1303.6318v1 [math.KT] 25 Mar 2013, 1--17.

\bibitem{Lavrenov_Sinchuk} 
A.~Lavrenov, S.~Sinchuk
\textit{A Horrocks-type theorem for even orthogonal\/ $\K_2$,} 
arXiv:1909.02637 v1 [math.GR] 5 Sep 2019, pp.~1--23.

\bibitem{Mason74}  A.~W.~Mason, \emph{A note on subgroups
of\/ $\GL(n,A)$ which are generated by commutators.}
{J. London Math. Soc}, {\bf 11} (1974), 509--512.

\bibitem{MAS1} A.~W.~Mason, \emph{On subgroup of $\GL(n,A)$ which are
generated by commutators, II.} {J. reine angew. Math.},
{\bf 322} (1981), 118--135.

\bibitem{MAS2} A.~W.~Mason, \emph{A further note on subgroups
of $\GL(n,A)$ which are generated by commutators.}
{Arch. Math.}, {\bf 37} (1981)(5) 401--405.

\bibitem{MAS3} A.~W.~Mason, W.W. Stothers, \emph{On subgroup
of $\GL(n,A)$ which are generated by commutators.}
{Invent. Math.}, {\bf 23} (1974), 327--346.


\bibitem{Mennicke} 
J.~L.~Mennicke, 
\emph{Finite factor groups of the unimodular group,} 
Ann.~Math., \textbf{81} (1965), 31--37.

\bibitem{Nica} 
B.~Nica, 
\emph{A true relative of Suslin's normality theorem,} 
Enseign.\ Math., \textbf{61} (2015), no.~1--2, 151--159.

\bibitem{Nica2018} 
B.~Nica,
\emph{On bounded elementary generation for $\SL_n$ over
polynomial rings},
Israel J. Math. \textbf{225} (2018), no.~1, 403--410. 

\bibitem{petrov1} V.~Petrov, \emph{Overgroups of unitary groups.}
{$K$-Theory}, \textbf{29} (2003), pp 147--174.

\bibitem{petrov2} V.~A.~Petrov, \emph{Odd unitary groups.}
{J. Math. Sci.}, {\bf 130} (2003), no. 3, 4752--4766.

\bibitem{petrov3} V.~A.~Petrov, {\it Overgroups of classical groups},
Doktorarbeit, State Univ. St.-Petersburg, 2005, 1--129 (in Russian).

\bibitem{Preusser-thesis}
 R.~Preusser,
\emph{The normal structure of hyperbolic unitary groups,}
Doktorarbeit, Uni. Bielefeld, 2014, 1--82,
available online at https://pub.uni-bielefeld.de/record/2701405

\bibitem{preusser-1} R.~Preusser,
\emph{Structure of hyperbolic unitary groups II: classification of
$E$-normal subgroups.}
{Algebra Colloq.}, {\bf 24} (2017), no.~2, 195--232.

\bibitem{preusser-2} R.~Preusser,
\emph{Sandwich classification for $\GL_n(R)$, $O_{2n}(R)$ and
$U_{2n}(R,\Lambda)$ revisited.}
{J. Group Theory}, {\bf 21} (2017), 21--44.

\bibitem{preusser-3} R.~Preusser,
\emph{Sandwich classification for $O_{2n+1}(R)$ and $U_{2n+1}(R,\Delta)$ revisited.}
{J. Group Theory}, {\bf 21} (2018), 539--571.

\bibitem{preusser-4} R.~Preusser,
\emph{Reverse decomposition of unipotents over
noncommutative rings I: General linear groups.}
arXiv:1912.03536 [math.RA], 7 Dec 2019, 1--15.

\bibitem{preusser-5} R.~Preusser,
\emph{The $E$-normal structure of Petrov’s odd unitary groups over
commutative rings,}
Comm. Algebra, \textbf{48} (2020), no~3, 1--18.

\bibitem{preusser-6} R.~Preusser,
\emph{On general linear groups over exchange rings.}
Linear Multilinear Algebra, 2020, DOI: 10.1080/03081087.2020.1743636.

\bibitem{Saliani}
M. Saliani, \emph{On the stability of the unitary group,}
 https://people.math.ethz.ch/\~{}knus/papers/ Maria\_Saliani.pdf,
1--12.

\bibitem{Shchegolev-thesis}
A.~Shchegolev, 
\emph{Overgroups of Elementary Block-Diagonal Subgroups in Even Unitary Groups over Quasi-Finite Rings,} 
Doktorarbeit, Uni. Bielefeld, 2015, 
available online at http://pub.uni-bielefeld.de/publication/2769055. 

\bibitem{Shchegolev-1}
A.~V.~Shchegolev, 
\emph{Overgroups of block-diagonal subgroups of a hyperbolic unitary group over a quasifinite ring: main results.} 
J. Math. Sci. (N.Y.), \textbf{222} (2017), no.~4, 516--523. 

\bibitem{Shchegolev-2}
A.~V.~Shchegolev, 
\emph{Overgroups of an elementary block-diagonal subgroup of the classical symplectic group over an arbitrary commutative ring.}
St. Petersburg Math. J., \textbf{30} (2019), no.~6, 1007--1041.

\bibitem{Sinchuk}
S.~Sinchuk, 
\emph{Injective stability for unitary $K_1$, revisited,} 
J.~K-Theory \textbf{11} (2013), no.~2, 233--242. 

\bibitem{SiSt}  
A.~Sivatski, A.~Stepanov, \emph{On the word length of
commutators in\/ $\GL_n(R)$,} 
{$K$-theory}, {\bf 17} (1999), 295--302.

\bibitem{Stepanov_calculus}
A.~Stepanov, \emph{Elementary calculus in Chevalley groups over rings,}
 {J.~Prime Res.\ Math.}, \textbf{9} (2013), 79--95.

\bibitem{Stepanov_nonabelian}
A.~V.~Stepanov, 
\emph{ Non-abelian $\K$-theory for Chevalley groups over rings,}  {J.~Math.\ Sci.}, \textbf{209} (2015), no.~4, 645--656.

\bibitem{stepanov10} A.~Stepanov, 
\emph{Structure of Chevalley groups over rings via universal localization,} 
J. Algebra, \textbf{450} (2016), 522--548.

\bibitem{ASNV} A.~Stepanov, N.~Vavilov,
\emph{Decomposition of transvections\/{\rm :} A theme with variations,}
$K$-Theory, \textbf{19} (2000), 109--153.

\bibitem{SV11} A.~Stepanov, N.~Vavilov, 
\emph{On the length of commutators in Chevalley groups,}
{Israel J. Math.} \textbf{185} (2011), 253--276.

\bibitem{tang} Tang Guoping, 
\emph{Hermitian groups and $K$-theory.}
{$K$-Theory}, {\bf 13} (1998), no.~3, 209--267.


\bibitem{Tavgen1990}
O.~I.~Tavgen, 
\emph{Bounded generation of Chevalley groups over rings
of\/ $S$-integer algebraic numbers},
 Izv. Acad. Sci. USSR, {\bf 54} (1990), no.1, 97--122.

\bibitem{VY} L.~N.~Vaserstein, Hong You, 
\emph{Normal subgroups of
classical groups over rings.} {J. Pure Appl. Algebra},
{\bf 105} (1995), 93--105.

\bibitem{NV-reverse} N.~Vavilov,
\emph{Towards the reverse decomposition of unipotents.}
{J.~Math.\ Sci.}, {\bf 470} (2018), 21--37.

\bibitem{NV18} N.~Vavilov, \emph{Unrelativised standard commutator
formula,} {J.~Math.\ Sci.}, {\bf 470} (2018), 38--49.

\bibitem{NV19} N.~Vavilov, 
\emph{Commutators of congruence subgroups
in the arithmetic case,} { J.~Math.\ Sci.}, {\bf 479} (2019), 5--22.

\bibitem{NV-reverse-2} N.~Vavilov,
\emph{Towards the reverse decomposition of unipotents. II. The relative case}
{ J.~Math.\ Sci.}, {\bf 484} (2019), 5--22.

\bibitem{NVAS}  N.~A.~Vavilov, A.~V.~Stepanov, 
\emph{Standard commutator
formula.} 
{Vestnik St. Petersburg State Univ., ser.~$1$}
\textbf{41} (2008), no.~1, 5--8.

\bibitem{NVAS2} N.~A.~Vavilov, A.~V.~Stepanov,
\emph{Standard commutator formula, revisited.}
{Vestnik St. Petersburg State Univ., ser.~$1$},
{\bf 43} (2010), no.~1, 12--17.

\bibitem{NZ2} N.~Vavilov, Z.~Zhang, 
\emph{Commutators of relative and unrelative elementary groups,
 revisited,} 
 J. Math. Sci. \textbf{485} (2019), 58--71.

\bibitem{NZ1} N.~Vavilov, Z.~Zhang, 
\emph{Generation of relative commutator subgroups in Chevalley groups. {\rm II},}
 Proc.  Edinburgh Math. Soc., \textbf{} (2020), 1--15,
 doi:10.1017/S0013091519000555.

\bibitem{NZ3} N.~Vavilov, Z.~Zhang, 
\emph{Multiple commutators of elementary subgroups\/{\rm:} 
end of the line,}
Linear Algebra Applic., \textbf{} (2019), 1--14.

\bibitem{NZ6} N.~Vavilov, Z.~Zhang, 
\emph{Inclusions among commutators of elementary subgroups,} 
J.~Algebra, \textbf{} (2019), 1--25.

\bibitem{NZ4} N.~Vavilov, Z.~Zhang, 
\emph{Commutators of relative and unrelative elementary subgroups 
in Chevalley groups,}
Proc.  Edinburgh Math. Soc., \textbf{} (2020), 1--19.

\bibitem{Voron-1} 
 E.~Yu.~Voronetsky,
 \emph{Groups normalized by the odd unitary group,}
 Algebra i Analiz \textbf{31} (2019), no.~6, 38--78.
  
\bibitem{Voron-2} E.~Voronetsky, 
\emph{Stability for odd unitary $K_1$,}
arXiv:1909.03254v1 [math.GR] 7 Sep 2019, 1--39.
 
\bibitem{Wilson} 
J.~S.~Wilson, 
\emph{The normal and subnormal structure of general linear groups,}
Proc. Camb. Philos. Soc. \textbf{71} (1972), 163--177.
 
\bibitem{HongYou} 
You Hong, 
\emph{On subgroups of Chevalley groups which are 
generated by commutators,} 
J.~Northeast Normal Univ., (1992), no.~2, 9--13.
 
\bibitem{You_subnormal} You Hong, 
\emph{Subgroups of classical groups normalized by relative 
elementary groups,} 
J. Pure Appl. Algebra \text{216} (2012), no.~5, 1040--1051. 

\bibitem{You-Zhou}
You Hong, Zhou Xuemei,
\emph{The structure of quadratic groups over commutative 
rings,}
Sci. China Math. \textbf{56} (2013), no.~11, 2261--2272.
 
\bibitem{Yu}
Yu Weibo,
\emph{Stability for odd unitary $K_1$ under the 
$\Lambda$-stable range condition,} 
J. Pure Appl. Algebra \textbf{217} (2013), no.~5, 886--891. 

\bibitem{Yu-Tang}
Yu Weibo, Tang Guoping 
\emph{Nilpotency of odd unitary $K_1$-functor,} 
Comm. Algebra \textbf{44} (2016), no.~8, 3422--3453.
 
\bibitem{Yu_Li_Liu}
Yu Weibo, Li Yaya, Liu Hang,
\emph{A classification of subgroups of odd unitary groups,} 
Comm. Algebra \text{46} (2018), no~9, 3795--3805. 

\bibitem{ZZ} 
Zhang Zuhong, 
\emph{Lower K-theory of unitary groups,} 
Doktorarbeit, Queen's Univ. Belfast, 2007, pp. 1--67.  
 
\bibitem{ZZ1}
Zhang Zuhong, 
\emph{Stable sandwich classification theorem for classical-like 
groups,} Math. Proc. Camb. Philos. Soc. 
\textbf{143} (2007), 607--619. 
 
 \bibitem{ZZ2}
 Zhang Zuhong, 
 \emph{Subnormal structure of non-stable unitary groups over rings,} 
 J. Pure Appl. Algebra \textbf{214} (2010), no.~5, 622--628. 

\end{thebibliography}
\end{document}